\newtheorem{theorem}{\indent Theorem}[section]
\newtheorem{definition}[theorem]{\indent Definition}
\newtheorem{lemma}[theorem]{\indent Lemma}
\newtheorem{remark}[theorem]{\indent Remark}
\begin{document}
\renewcommand{\baselinestretch}{1.3}


\begin{center}
    {\large \bf On desingularization of steady vortex for the lake equations}
\vspace{0.5cm}\\{\sc Daomin Cao*}\\
{\small Academy of Mathematics and Systems Science, CAS,Beijing, 100190, China}\\
{\small and University of Chinese Academy of Sciences, Beijing 100049,  P.R. China}\\
{\sc Weicheng Zhan}\\
{\small Academy of  Mathematics and Systems Science, CAS, Beijing 100190}\\
{\small and University of Chinese Academy of Sciences, Beijing 100049,  P.R. China}\\
and\\
{\sc Changjun Zou}\\
{\small Academy of  Mathematics and Systems Science, CAS, Beijing 100190}\\
{\small and University of Chinese Academy of Sciences, Beijing 100049,  P.R. China}\\
\end{center}


\renewcommand{\theequation}{\arabic{section}.\arabic{equation}}
\numberwithin{equation}{section}


\begin{abstract}
In this paper, we constructed a family of steady vortex solutions for the lake equations with a general vorticity function, which constitutes a desingularization of a singular vortex. The precise localization of the asymptotic singular vortex is shown to be the deepest position of the lake. We also study global nonlinear stability for these solutions. Some qualitative
and asymptotic properties are also established.

\textbf{Keywords:} Steady vortex solutions; Elliptic equations; Variational method; Desingularization;
Maximizer
\end{abstract}

\vspace{-1cm}

\footnote[0]{ \hspace*{-7.4mm}
$^{*}$ Corresponding author.\\
E-mails: dmcao@amt.ac.cn; zhanweicheng16@mails.ucas.ac.cn; zouchangjun17@mails.ucas.ac.cn}

\section{Introduction}
In this paper, we are concerned with a two-dimensional geophysical model which describes the evolution of the vertically averaged horizontal component of the three-dimensional velocity of an incompressible Euler flow.
The lake equations in a planar domain $D$ with prescribed initial and boundary conditions are

\begin{numcases}
{}
\label{1-1} \text{div}(b\,\mathbf{v})=0\,\ \, \ \ \ \ \ \ \  \ \, &\text{on}\  $\mathbb{R_+}\times D$,  \\
\label{1-2} \partial_t\mathbf{v}+(\mathbf{v}\cdot\nabla)\mathbf{v}=-\nabla P \ \ \ \  &\text{on}\  $\mathbb{R_+}\times D$,\\
\label{1-3}  (b\mathbf{v})\cdot \mathbf{n}=0\ \ \ \ \ \ \ \ \ \ \ \ \ \ \, &\text{on}\  $\mathbb{R_+}\times \partial D$,\\
\label{1-in} \mathbf{v}(0,x)=\mathbf{v}_0(x)\ \ &\text{on}\ D.
\end{numcases}

Here $\mathbf{v}=\mathbf{v}(t,x)=(v_1,v_2)$ is the velocity field, $P=P(t,x)$ is the scalar pressure, $b=b(x)$ is a positive depth function, and $\mathbf{n}$ is the unit outward normal to $\partial D$. The two-dimensional Euler equations and the three-dimensional axisymmetric Euler equations are two particular cases of this lake model. For more background on this model, we refer to \cite{Cam, De3}.

Let $\omega=\text{curl}\mathbf{v}:=\partial_1v_2-\partial_2v_1$ be the corresponding vorticity. Applying curl on the momentum equation \eqref{1-2}, we get the following vorticity equation
\begin{equation}\label{tran}
  \partial_t\omega+\text{div}(\omega\mathbf{v})=0.
\end{equation}
Thanks to the condition $\text{div}(b\mathbf{v})=0$, equation \eqref{tran} is equivalent to
\begin{equation}\label{tran2}
  \partial_t\omega+b\mathbf{v}\cdot \nabla\left(\frac{\omega}{b}\right)=0.
\end{equation}
Let us introduce the potential vorticity $\zeta:=b^{-1}\omega$. Then equation \eqref{tran2} becomes
\begin{equation}\label{vor}
  \partial_t \zeta+\mathbf{v} \cdot \nabla \zeta=0,
\end{equation}
which is actually a nonlinear transport equation. By virtue of equation \eqref{1-1}, there exists a stream function $\psi$ such that
\begin{equation}\label{1-4}
  \mathbf{v}=\left(\frac{\partial_2\psi}{b},-\frac{\partial_1\psi}{b}\right).
\end{equation}
Now we turn to consider the steady lake equations. Substituting  \eqref{1-4} into equation \eqref{vor} we obtain
\begin{equation}\label{vor2}
 \nabla^\perp \psi \cdot\nabla \zeta=0,
\end{equation}
where $x^\perp:=(x_2,-x_1)$ denotes clockwise rotation through ${\pi}/{2}$. Equation \eqref{vor2} suggests that $\psi$ and $\zeta$ are functionally dependent.
Indeed if $\zeta=f(\psi)$ for some vorticity function $f: \mathbb{R}\to \mathbb{R}$, then equation \eqref{vor2} automatically holds. Let $F$ be a primitive function of $f$. Once we find the stream function $\psi$, the velocity of the flow is given by \eqref{1-4} and the pressure is given by $P=-F(\psi)-\frac{1}{2}|\mathbf{v}|^2$. We are thus interested in studying the asymptotic behavior of solutions of
\begin{equation}\label{1-5}
  \mathcal{L}\psi:=-\frac{1}{b}\,\text{div}\frac{\nabla \psi}{b}=\frac{1}{\varepsilon^2}f(\psi).
\end{equation}
In \cite{DV}, De Valeriola and Van Schaftingen studied this problem. They constructed a family of desingularized solutions of equation \eqref{1-5} when $f(s)=s_+^p$ for some $p>1$. Their method was based on the mountain pass theorem of Ambrosetti and Rabinowitz \cite{Am}. Recently, Dekeyser also investigated desingularization of the steady lake equations via a different method \cite{De1, De2}. He proposed a variational principle (called the vorticity method) by following the lines of thought of Arnol'd \cite{Ar1,Ar2,Ar3} and Benjamin \cite{Be}. Using the vorticity method, he obtained a family of desingularized vortex pairs by maximization of  the kinetic energy over a class of rearrangements of sign changing functions. Compared to the work of De Valeriola and Van Schaftingen \cite{DV}, the vorticity distribution function is now prescribed. However, the vorticity function $f$ in \cite{De1, De2} arises as an infinite-dimensional Lagrange multiplier corresponding the vorticity rearrangement class and hence left undetermined. For the general nonlinearity $f$, it seems that there are not any results. Our purpose in this paper is to study desingularization of the steady lake equations with general vorticity function $f$. The class of admissible functions of $f$ considered here includes all $s_+^p$ for $p>0$. More precisely, we make the following assumptions on $f$:
\begin{itemize}
\item[(H1)] $f$ is continuous on $\mathbb{R}$, $f(s)=0$ for $s\leq 0$, and $f$ is strictly increasing in $(0,+\infty)$.
\item[(H2)] There exists $\vartheta_0\in(0,1)$ such that
\[\int_0^sf(r)dr\leq \vartheta_0f(s)s,\,\,\forall\,s\geq0.\]
\item[(H3)] For all $\tau>0,$
\[\lim_{s\to+\infty}f(s)e^{-\tau s}=0.\]
\end{itemize}
To state our main results, we need to make some preparations first. Lebesgue measure on $\mathbb{R}^2$ is denoted by $\textit{m}$, and is to be understood as the measure defining any $L^p$ space and $W^{1,p}$ space, except when stated otherwise; $\nu$ denotes the measure on $\mathbb{R}^2$ having density $b$ with respect to $\textit{m}$ and $|\cdot|$ denotes $\nu$-measure; $B_\delta(y)$ denotes an open ball in $\mathbb{R}^2$ of center $y$ and radius $\delta>0$.
We define the inverse of $\mathcal{L}$ as follows. One can check that the operator $\mathcal{K}$ is well-defined; see, e.g., \cite{De1, DV}.
\begin{definition}
The Hilbert space $H(D)$ is the completion of $C_0^\infty(D)$ with the scalar product
\begin{equation*}
  \langle u,v\rangle_H=\int_D\frac{1}{b^2}\nabla u\cdot\nabla v d\nu.
\end{equation*}
We define the inverse $\mathcal{K}$ for $\mathcal{L}$ in the weak solution sense,
\begin{equation}\label{2-1}
  \langle \mathcal{K}u,v\rangle_H=\int_D uv d\nu \ \  \text{for all}\  v\in H(D), \ \ when \ u \in  L^p(D,d\nu)\ \text{for some}\ p>1.
\end{equation}
\end{definition}

 Let $D\subset \mathbb{R}^2$ be a bounded simply-connected Lipschitz domain and positive depth function $b\in C^\alpha(\bar{D})\cap C^1_{\text{loc}}(D)$ for some $\alpha \in (0,1)$. As in
\cite{De2}, we introduce the following definition.
\begin{definition}
 A lake $(D,b)$ is said to be continuous if the operator $\mathcal{K}$ admits the following integral kernel representation:
 \begin{equation*}
   \mathcal{K}\zeta(x)=b(x)\int_D G(x,y)\zeta(y)d\nu(y)+\int_D R(x,y)\zeta(y)d\nu(y)\  \ \ \forall\, \zeta\in L^p(D, d\nu),\, p>1,
 \end{equation*}
 where $G$ is the Green's function for $-\Delta$ with Dirichlet boundary conditions, and $R: D\times D\to \mathbb{R}$ is a bounded and continuous correction function.
\end{definition}
In fact, more general concept of continuous lake is defined in \cite{De2}. For simplicity, we will consider only the above case here. We remark that this class covers two situations encountered in the literature: first, the case of $b\in W^{1,\infty}(D)\cap C^1_{\text{loc}}(D)$ with the additional condition that $\inf_D b>0$; second, the case of a degenerated depth function that vanishes as a polynomial of some regularized distance at the boundary. Mixed conditions are also possible. For more examples, we refer the reader to \cite{De1}. Finally, let $\mathcal{S}:=\{x\in \bar{D}\mid b(x)=\sup_D b\}$.

Having made these preparations, we now state our first main result.
\begin{theorem}\label{thm1}
Let $(D,b)$ be a continuous lake. Let $f$ be a function satisfying (H1)--(H3) and $\kappa>0$. Suppose $\mathcal{S}\cap D \not=\varnothing$, then for all sufficiently small $\varepsilon>0$, there exists a family of solutions $(\psi^\varepsilon,\zeta^\varepsilon)$ with the following properties:
\begin{itemize}
\item[(i)]For any $p>1$, $\psi^\varepsilon\in W^{2,p}_{\text{loc}}(D)$ and satisfies
\begin{equation*}
   \mathcal{L}\psi^\varepsilon =\zeta^\varepsilon\ \ \text{a.e.} \ \text{in} \ D. \\
\end{equation*}
\item[(ii)] $(\psi^\varepsilon,\zeta^\varepsilon)$ is of the form
\begin{equation*}
      \psi^\varepsilon=\mathcal{K}\zeta^\varepsilon-\mu^\varepsilon,\ \  \zeta^\varepsilon=\frac{1}{\varepsilon^2}f(\psi^\varepsilon),\ \ \int_D\zeta^\varepsilon d\nu=\kappa,
\end{equation*}
for some $\mu^\varepsilon>0$ depending on $\varepsilon$. Furthermore, as $\varepsilon \to 0^+$, one has
\begin{equation*}
   \mu^\varepsilon= \frac{\kappa \sup_D b}{2\pi}\ln \frac{1}{\varepsilon}+O(1).
 \end{equation*}

 \item[(iii)] There exists a constant $\eta>0$ not depending on $\varepsilon$ such that $\text{dist}\left(\text{supp}(\zeta^\varepsilon),\partial D\right)>\eta$. Moreover, there exists some constant $L_0>1$ independent of $\varepsilon$ such that
 \begin{equation*}
   {diam}\left(\text{supp}(\zeta^\varepsilon)\right)\le L_0\varepsilon.
 \end{equation*}
 As $\varepsilon$ goes to zero, $\text{supp}(\zeta^\varepsilon)$ will shrink to $\mathcal{S}$. That is, for every $\delta>0$, there holds
\[\mbox{\text{supp}}(\zeta^\varepsilon)\subset \mathcal{S}_\delta:=\{x\in D \mid \text{dist}(x,\mathcal{S})<\delta\}\]
provided $\varepsilon>0$ is sufficiently small.
\item[(iv)]Let the center of vorticity be
\begin{equation*}
X^\varepsilon=\frac{\int_D x\zeta^\varepsilon(x)dm(x)}{\int_D \zeta^\varepsilon(x)dm(x)},
\end{equation*}
and define the rescaled version of $\zeta^\varepsilon$ to be
\begin{equation*}
\xi^\varepsilon(x)={\varepsilon^2}\zeta^\varepsilon(X^\varepsilon+\varepsilon x)
\end{equation*}
Then every accumulation points of $\xi^\varepsilon(x)$ as $\varepsilon \to 0^+$, in the weak topology of $L^2$, are radially nonincreasing functions.
\end{itemize}
\end{theorem}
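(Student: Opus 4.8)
The plan is to obtain $(\psi^\varepsilon,\zeta^\varepsilon)$ as a maximizer of a penalized kinetic-energy functional. Let $g:=f^{-1}$ on $(0,+\infty)$, which is well defined, increasing and onto by (H1), and set $A(t):=\int_0^t g(s)\,ds$; by (H2) the function $f$ is superlinear, so $g$ is unbounded and $A$ is convex and superlinear, while (H3) keeps $A$ finite everywhere. I would then maximize
\[
E_\varepsilon(\zeta)=\frac12\int_D \zeta\,\mathcal{K}\zeta\,d\nu-\frac{1}{\varepsilon^2}\int_D A(\varepsilon^2\zeta)\,d\nu
\]
over the admissible class $\mathcal{M}=\{\zeta\in L^2(D,d\nu): \zeta\ge0,\ \int_D\zeta\,d\nu=\kappa\}$ (if necessary intersected with $\{\mathrm{supp}\,\zeta\subset \overline{B_{r_0}(x_0)}\}$ for a fixed $x_0\in\mathcal{S}\cap D$ and fixed $r_0$, a constraint I will show to be inactive). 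Computing the first variation under mass-preserving perturbations, and handling the one-sided variations forced by $\zeta\ge0$, yields $\mathcal{K}\zeta^\varepsilon-g(\varepsilon^2\zeta^\varepsilon)=\mu^\varepsilon$ on $\{\zeta^\varepsilon>0\}$ with a Lagrange multiplier $\mu^\varepsilon$; writing $\psi^\varepsilon:=\mathcal{K}\zeta^\varepsilon-\mu^\varepsilon$ this is exactly $\varepsilon^2\zeta^\varepsilon=f(\psi^\varepsilon)$, the profile relation in (ii), and one checks $\mu^\varepsilon>0$.

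For existence I would use the direct method. Superlinearity of $A$ bounds $E_\varepsilon$ from above on $\mathcal{M}$ (the penalty dominates the logarithmically weighted interaction coming from the kernel $bG+R$) and bounds any maximizing sequence in a reflexive $L^q(D,d\nu)$; the interaction $\zeta\mapsto\int_D\zeta\,\mathcal{K}\zeta\,d\nu$ is weakly continuous because $\mathcal{K}$ is compact (its kernel $bG+R$ is Hilbert--Schmidt), whereas $\zeta\mapsto\int_D A(\varepsilon^2\zeta)\,d\nu$ is weakly lower semicontinuous by convexity. This produces a maximizer $\zeta^\varepsilon$. Since $\psi^\varepsilon$ is bounded and $f$ is continuous, $\zeta^\varepsilon=\varepsilon^{-2}f(\psi^\varepsilon)\in L^\infty$, and bootstrapping $\mathcal{L}\psi^\varepsilon=\zeta^\varepsilon$ through interior Calder\'on--Zygmund estimates (the coefficients are locally H\"older since $b\in C^1_{\mathrm{loc}}$) gives $\psi^\varepsilon\in W^{2,p}_{\mathrm{loc}}(D)$, proving (i).

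The expansion $\mu^\varepsilon=\frac{\kappa\sup_D b}{2\pi}\ln\frac1\varepsilon+O(1)$ and the localization in (iii) come from matched energy bounds. For the lower bound I would insert a rescaled radial test profile $\varepsilon^{-2}\phi\big((x-x_0)/\varepsilon\big)$ centered at $x_0\in\mathcal{S}$ and extract, via the logarithmic singularity of $G$ and the boundedness of $R$, the leading term $\tfrac{\kappa^2\sup_D b}{4\pi}\ln\frac1\varepsilon$; the upper bound uses the same kernel expansion on the genuine maximizer, and pairing the resulting energy level with the constraint and the profile relation converts it into the stated expansion for $\mu^\varepsilon$. The bound $\mathrm{dist}(\mathrm{supp}\,\zeta^\varepsilon,\partial D)>\eta$ follows because redistributing mass toward $\partial D$, where $R$ is bounded and $b$ is no larger (and tends to $0$ in the degenerate case), strictly lowers the self-energy and contradicts maximality. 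The diameter bound $\mathrm{diam}(\mathrm{supp}\,\zeta^\varepsilon)\le L_0\varepsilon$ is obtained by rescaling and trapping the positivity set $\{\psi^\varepsilon>0\}$ in a ball of radius $O(\varepsilon)$ using the value of $\mu^\varepsilon$ and the maximum principle for $\mathcal{L}$. Finally, because the leading self-energy coefficient equals $b$ at the concentration center, a center with $b(X^\varepsilon)<\sup_D b$ would lose energy of order $(\sup_D b-b(X^\varepsilon))\ln\frac1\varepsilon$; maximality forces $b(X^\varepsilon)\to\sup_D b$, hence $\mathrm{supp}\,\zeta^\varepsilon\subset\mathcal{S}_\delta$ for small $\varepsilon$.

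For (iv) I would pass to the rescaled profile $\xi^\varepsilon(x)=\varepsilon^2\zeta^\varepsilon(X^\varepsilon+\varepsilon x)$, which by (iii) has support in a fixed ball and mass $\int\xi^\varepsilon\,dm\to\kappa/\sup_D b$, hence a subsequence converges weakly in $L^2$ to some $\xi\ge0$. Using $b(X^\varepsilon+\varepsilon x)\to\sup_D b$ and the boundedness of $R$, the limit $\xi$ is a maximizer of the limiting functional $\frac{\sup_D b}{4\pi}\iint\ln\frac1{|x-y|}\xi(x)\xi(y)\,dx\,dy-\int A(\xi)\,dx$ under a mass constraint. Since the symmetric-decreasing rearrangement preserves the mass and the (equimeasurable) penalty $\int A(\xi)$ while, by Riesz's rearrangement inequality applied to the kernel after adding a rearrangement-invariant constant, strictly increasing the logarithmic interaction unless $\xi$ already coincides with a translate of its rearrangement, every accumulation point must be radially nonincreasing about its center. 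The step I expect to be the main obstacle is the sharp diameter estimate $\mathrm{diam}(\mathrm{supp}\,\zeta^\varepsilon)\le L_0\varepsilon$ together with the exact localization at $\mathcal{S}$: controlling the positivity set at the precise scale $\varepsilon$ demands delicate two-sided energy bounds and a careful separation of the singular part $bG$ from the bounded correction $R$ in the Green representation, especially near a degenerate boundary.
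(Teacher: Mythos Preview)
Your overall plan---maximize the penalized energy $E_\varepsilon(\zeta)=E(\zeta)-\varepsilon^{-2}\int_D F_*(\varepsilon^2\zeta)\,d\nu$ under the circulation constraint and read off the asymptotics from matched upper and lower energy bounds---is exactly the paper's. The substantive difference lies in how compactness for the existence step is obtained, and there your argument has a real gap.

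You assert that superlinearity of $A=F_*$ bounds a maximizing sequence in a reflexive $L^q$. This fails for the full class (H1)--(H3): for instance $f(s)\approx e^{\sqrt{s}}$ satisfies (H3), yet $g(t)\sim(\ln t)^2$ and $A(t)\sim t(\ln t)^2$, which is superlinear but controls no $L^q$ norm with $q>1$; the resulting Dunford--Pettis weak-$L^1$ compactness does not mesh with the bilinear interaction $\zeta\mapsto\int\zeta\,\mathcal{K}\zeta$, since the logarithmic kernel does not map $L^1$ compactly into $L^\infty$. (Also, (H2) does \emph{not} force $f$ to be superlinear---$f(s)=s_+^p$ with $0<p<1$ is admissible---it only gives $f(s)\to\infty$.) The paper avoids all this by maximizing over the truncated class $\mathcal{A}_{\varepsilon,\Lambda}=\{0\le\zeta\le\Lambda\varepsilon^{-2},\ \int\zeta\,d\nu=\kappa\}$, which is weak-$*$ compact in $L^\infty$. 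The Euler--Lagrange relation then carries an extra patch term on $\{\zeta=\Lambda\varepsilon^{-2}\}$, and the key step is its removal: on that set $\psi^{\varepsilon,\Lambda}\ge f^{-1}(\Lambda)$, while the energy lower bound combined with the estimate on $\mu^{\varepsilon,\Lambda}$ yields $\psi^{\varepsilon,\Lambda}\le |1-2\vartheta_1|f^{-1}(\Lambda)+C\ln\Lambda+C$; for large $\Lambda$ these are incompatible precisely because (H3) gives $f^{-1}(\Lambda)/\ln\Lambda\to\infty$. This is where (H3) enters the paper's proof, not in any compactness argument.

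Two further points where the paper's execution differs from your sketch: the sharp bound $\mathrm{diam}(\mathrm{supp}\,\zeta^\varepsilon)\le L_0\varepsilon$ comes not from a maximum-principle trapping of $\{\psi^\varepsilon>0\}$ but from a quantitative potential-theoretic lemma on sets $\Omega$ satisfying $\int_D\ln\frac{1}{|x-y|}\Gamma(y)\,dm(y)\ge(1-A)\ln\frac{1}{\epsilon}-C$ for all $x\in\Omega$, applied with $\Gamma=\kappa^{-1}b\,\zeta^\varepsilon$ once the lower bound on $\mu^\varepsilon$ is in hand; and the uniform clearance $\mathrm{dist}(\mathrm{supp}\,\zeta^\varepsilon,\partial D)>\eta$ is obtained by contradiction via the explicit two-sided bound on the regular part $H(x,y)=\tfrac{1}{2\pi}\ln\tfrac{\mathrm{diam}(D)}{|x-y|}-G(x,y)$, whose blow-up near $\partial D$ costs more energy than the lower bound allows. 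Your argument for (iv) via Riesz's inequality is essentially the paper's, which closes by invoking the Burchard--Guo equality case.
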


The condition $\mathcal{S}\cap D \not=\varnothing$ means that there exist some deepest points inside the lake. In such a case, the steady vortices $\{\zeta^\varepsilon: \varepsilon>0\}$ will be away from the boundary and concentrated in one of the deepest position inside the lake. If $b$ is constant, Theorem \ref{thm1} does not help to locate the vortex; indeed a refined estimate for the Euler equation will locate them at minimum of the Robin function of $D$ \cite{Cao2}. Moreover, one may expect to obtain a more accurate picture of the asymptotic behavior of the vortex by further studying the second order asymptotic properties of the energy functional, see \cite{De2} for some discussions. When $f(s)=s_+^p$ for some $p>1$, our results are similar to the results of De Valeriola and Van Schaftingen \cite{DV}. However, we would like to point out that the circulation of the flow we constructed is constant while theirs is not. It seems difficult to construct such solutions in their way.

Our second main result is concerned with the other case when $\mathcal{S}\subset \partial D$. That is, the deepest position is only reached on the boundary $\partial D$.

\begin{theorem}\label{thm2}
Let $(D,b)$ be a continuous lake. Let $f$ be a function satisfying (H1)--(H3) and $\kappa>0$. Suppose $\mathcal{S}\subset \partial D$, then for all sufficiently small $\varepsilon>0$, there exists a family of solutions $(\psi^\varepsilon,\zeta^\varepsilon)$ with the following properties:
\begin{itemize}
\item[(i)]For any $p>1$, $\psi^\varepsilon\in W^{2,p}_{\text{loc}}(D)$ and satisfies
\begin{equation*}
   \mathcal{L}\psi^\varepsilon =\zeta^\varepsilon\ \ \text{a.e.} \ \text{in} \ D. \\
\end{equation*}
\item[(ii)] $(\psi^\varepsilon,\zeta^\varepsilon)$ is of the form
\begin{equation*}
      \psi^\varepsilon=\mathcal{K}\zeta^\varepsilon-\mu^\varepsilon,\ \  \zeta^\varepsilon=\frac{1}{\varepsilon^2}f(\psi^\varepsilon),\ \ \int_D\zeta^\varepsilon d\nu=\kappa,
\end{equation*}
for some $\mu^\varepsilon>0$ depending on $\varepsilon$. Furthermore, as $\varepsilon \to 0^+$, one has
\begin{equation*}
   \mu^\varepsilon= \frac{\kappa \sup_D b}{2\pi}\ln \frac{1}{\varepsilon}+O\left(\ln \ln \frac{1}{\varepsilon}\right).
 \end{equation*}

 \item[(iii)] There holds
 \begin{equation*}
   \lim_{\varepsilon \to 0^+}\frac{\ln \text{diam} \left(\text{supp}(\zeta^\varepsilon)\right)}{\ln \varepsilon}=1.
 \end{equation*}
  As $\varepsilon$ goes to zero, $\text{supp}(\zeta^\varepsilon)$ will shrink to $\mathcal{S}$. That is, for every $\delta>0$, there holds
\[\mbox{\text{supp}}(\zeta^\varepsilon)\subset \mathcal{S}_\delta\]
provided $\varepsilon>0$ is sufficiently small.

\item[(iv)]Let the center of vorticity be
\begin{equation*}
X^\varepsilon=\frac{\int_D x\zeta^\varepsilon(x)dm(x)}{\int_D \zeta^\varepsilon(x)dm(x)},
\end{equation*}
and define the rescaled version of $\zeta^\varepsilon$ to be
\begin{equation*}
\xi^\varepsilon(x)={\varepsilon^2}\zeta^\varepsilon(X^\varepsilon+\varepsilon x)
\end{equation*}
Then every accumulation points of $\xi^\varepsilon(x)$ as $\varepsilon \to 0^+$, in the weak topology of $L^2$, are radially nonincreasing functions.
\end{itemize}
\end{theorem}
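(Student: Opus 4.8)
The plan is to realize $(\psi^\varepsilon,\zeta^\varepsilon)$ as the maximizer of a kinetic-energy functional under a fixed-circulation constraint, adapting the vorticity method but building the penalization out of $f$ itself rather than prescribing a rearrangement class. Writing $g:=f^{-1}$ for the inverse of $f$ restricted to $(0,+\infty)$ (which exists on the relevant range by the strict monotonicity in (H1)) and $J(t):=\int_0^t g(s)\,ds$, I would maximize
\[ E_\varepsilon(\zeta)=\frac{1}{2}\int_D \zeta\,\mathcal{K}\zeta\,d\nu-\frac{1}{\varepsilon^2}\int_D J(\varepsilon^2\zeta)\,d\nu \]
over the admissible class
\[ \mathcal{A}_\varepsilon=\left\{\zeta\in L^\infty(D):\ \zeta\geq 0,\ \int_D\zeta\,d\nu=\kappa,\ \mathrm{supp}\,\zeta\subset\mathcal{S}_{\delta_0}\right\}, \]
for a fixed small $\delta_0$. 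Since $g$ is increasing, $J$ is convex, so the penalty term is weakly lower semicontinuous; moreover (H1) and (H3) force $g$ to be super-logarithmic, hence $J$ superlinear, which makes a maximizing sequence equi-integrable and thus weakly compact. The quadratic part is weakly continuous because $\mathcal{K}$ is compact on $L^2(D,d\nu)$ (its kernel splits into the Green's function of $-\Delta$ plus a bounded continuous correction $R$). The direct method then yields a maximizer $\zeta^\varepsilon$.

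Next I would compute the variation of $E_\varepsilon$ along circulation-preserving admissible perturbations. On the set where the support constraint is inactive this gives $\mathcal{K}\zeta^\varepsilon-g(\varepsilon^2\zeta^\varepsilon)=\mu^\varepsilon$, with a Lagrange multiplier $\mu^\varepsilon\in\mathbb{R}$ for the constraint $\int_D\zeta^\varepsilon\,d\nu=\kappa$. Setting $\psi^\varepsilon:=\mathcal{K}\zeta^\varepsilon-\mu^\varepsilon$ and inverting, $\psi^\varepsilon=g(\varepsilon^2\zeta^\varepsilon)$ on $\{\zeta^\varepsilon>0\}$, i.e. $\zeta^\varepsilon=\varepsilon^{-2}f(\psi^\varepsilon)$, which is the profile equation of (ii). Because $\mathcal{L}$ annihilates constants and $\mathcal{K}$ inverts $\mathcal{L}$, we get $\mathcal{L}\psi^\varepsilon=\zeta^\varepsilon$; since $\zeta^\varepsilon=\varepsilon^{-2}f(\psi^\varepsilon)\in L^\infty$, elliptic bootstrapping yields $\psi^\varepsilon\in W^{2,p}_{\mathrm{loc}}(D)$, giving (i).

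The analytic core is a pair of matching energy estimates. For the lower bound I would insert a trial vorticity of the right scale — a fixed radial profile rescaled to width $\varepsilon$, of mass $\kappa$, centered at a point $z^\varepsilon\in D$ at a carefully chosen distance $d^\varepsilon$ from $\mathcal{S}\subset\partial D$. Using $G(x,y)=-\frac{1}{2\pi}\ln|x-y|-H(x,y)$, the self-interaction contributes $\frac{\kappa^2 b(z^\varepsilon)}{4\pi}\ln\frac{1}{\varepsilon}$ at leading order, the penalty contributes only an $O(1)$ term controlled through (H2)--(H3), and the regular part $H$ (Robin function $\sim\frac{1}{2\pi}\ln\frac{1}{d^\varepsilon}$) produces a loss $\sim\frac{\kappa^2 b(z^\varepsilon)}{4\pi}\ln\frac{1}{d^\varepsilon}$. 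The depth deficit $\sup_D b-b(z^\varepsilon)$, controlled by the $C^\alpha$ modulus of $b$, times $\ln\frac{1}{\varepsilon}$, competes against this boundary loss; optimizing over $d^\varepsilon$ fixes $d^\varepsilon$ up to a slowly varying factor and produces the $O(\ln\ln\frac{1}{\varepsilon})$ correction. The matching upper bound, combined with the Euler--Lagrange relation tested against $\zeta^\varepsilon$ and the constraint $\int_D\zeta^\varepsilon\,d\nu=\kappa$, isolates $\mu^\varepsilon$ and yields $\mu^\varepsilon=\frac{\kappa\sup_D b}{2\pi}\ln\frac{1}{\varepsilon}+O(\ln\ln\frac{1}{\varepsilon})$, completing (ii). Feeding the energy bounds back into the kernel structure forces concentration: the support width is $\varepsilon$ times a slowly varying factor, so $\ln\mathrm{diam}/\ln\varepsilon\to1$, and the energetic preference for large $b$ drives $z^\varepsilon\to\mathcal{S}$, whence $\mathrm{supp}\,\zeta^\varepsilon\subset\mathcal{S}_\delta$ for small $\varepsilon$; in particular the localization in $\mathcal{A}_\varepsilon$ is inactive, justifying the Euler--Lagrange computation. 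This is (iii). For (iv) I pass to $\xi^\varepsilon(x)=\varepsilon^2\zeta^\varepsilon(X^\varepsilon+\varepsilon x)$: the constraints keep $\{\xi^\varepsilon\}$ bounded in $L^2$, and along any weakly convergent subsequence the rescaled quadratic form becomes the radially symmetric interaction with kernel $-\frac{1}{2\pi}\ln|x-y|$, whose constrained maximizers are, by the Riesz rearrangement inequality together with the strict monotonicity of $g=f^{-1}$, radially nonincreasing; this property transfers to the weak limit.

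The main obstacle I anticipate is precisely the boundary competition that separates Theorem \ref{thm2} from Theorem \ref{thm1}: because $\mathcal{S}\subset\partial D$, the vortex is simultaneously attracted toward $\partial D$ (to increase the depth factor $b$ in the interaction energy) and repelled from it (the Dirichlet Green's function, through the regular part $H$ and the behavior of $b$ near the boundary, sharply penalizes proximity to $\partial D$). Establishing two-sided control on the distance $d^\varepsilon$ sharp enough to yield both the $\ln\ln\frac{1}{\varepsilon}$ correction in $\mu^\varepsilon$ and the exact exponent in $\ln\mathrm{diam}/\ln\varepsilon\to1$ is substantially more delicate than the interior case, where $z^\varepsilon$ simply converges to an interior deepest point and the correction is merely $O(1)$.
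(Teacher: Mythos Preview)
Your overall strategy coincides with the paper's: maximize the same penalized energy (your $J$ is the paper's $F_*$), read off the profile from the Euler--Lagrange relation, insert a trial vortex at distance $\sim(\ln\frac{1}{\varepsilon})^{-1/\alpha}$ from a point of $\mathcal{S}\subset\partial D$ to generate the $\ln\ln\frac{1}{\varepsilon}$ correction, use a Turkington-type concentration lemma for (iii), and invoke Riesz rearrangement (with the Burchard--Guo equality case) for (iv). You have correctly identified the boundary attraction/repulsion competition as the source of the weaker remainder.

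The genuine gap is your existence step and choice of admissible class. You work in $\{\zeta\in L^\infty:\zeta\ge0,\ \int\zeta\,d\nu=\kappa,\ \mathrm{supp}\,\zeta\subset\mathcal{S}_{\delta_0}\}$ and appeal to superlinearity of $J$ for compactness. But superlinearity yields only equi-integrability, hence at best weak $L^1$ compactness; there is no reason the limit stays in $L^\infty$, nor is the quadratic part obviously upper semicontinuous for weak $L^1$ convergence. Moreover, the extra support constraint is an obstacle that can contaminate the Euler--Lagrange relation before you have proved the concentration that would render it inactive. The paper handles all of this differently: it imposes an artificial pointwise bound $0\le\zeta\le\Lambda/\varepsilon^2$ (and no support constraint), so existence is immediate by weak-$*$ compactness in $L^\infty$. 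The price is a ``patch'' term $\frac{\Lambda}{\varepsilon^2}\chi_{\{\psi^{\varepsilon,\Lambda}\ge f^{-1}(\Lambda)\}}$ in the Euler--Lagrange identity, and a nontrivial block of the argument (the refined lower bound at $X^\varepsilon$, the uniform upper bound on $\psi^{\varepsilon,\Lambda}$, and the elimination of the patch) is devoted to showing that one can fix $\Lambda$ large \emph{independently of $\varepsilon$} so that this set is empty. This is precisely where (H2) and (H3) enter: (H2) (in the form $F_*(s)\ge\vartheta_1 s f^{-1}(s)$) gives $\psi^{\varepsilon,\Lambda}\le |1-2\vartheta_1|f^{-1}(\Lambda)+\frac{\kappa\sup_D b}{4\pi}\ln\Lambda+C$, and (H3) ensures $f^{-1}(\Lambda)$ beats $\ln\Lambda$. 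Your outline bypasses this mechanism without offering a substitute, so as written the derivation of the clean profile $\zeta^\varepsilon=\varepsilon^{-2}f(\psi^\varepsilon)$ is not justified.
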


Compared to Theorem \ref{thm1}, in this case some fine estimates were lost. This is mainly because the interaction of the vortex with the boundary lost some energy of order $O\left(\ln \ln {\varepsilon}\right)$. We do not study this phenomenon in detail in the present work. If $b(x)=x_1$ and $f(s)=s_+$, this model also occurs in the plasma problem, see, e.g., \cite{Ber,CF1, Te}. It describes the equilibrium of a plasma confined in a toroidal cavity (a ``Tokamak machine''). In \cite{CF1}, Caffarelli and Friedman studied asymptotic behavior of this system. They constructed a family of plasmas which were shown to converge to the part of the boundary of the domain. One can see that our results are similar to theirs.

To explain our strategy let us introduce some notations. For a given function $f$ satisfying $(H1)$, let $f^{-1}$ be defined as the inverse function of $f$ in $(0,+\infty)$ and $f^{-1}\equiv0$ in $(-\infty,f(0^+)]$. Set $F(s):=\int_{0}^{s}f(r)dr$ and  the conjugate function to $F$ is then defined by $F_*(s)=\int_0^sf^{-1}(r)dr$. Our strategy for the proofs of Theorems \ref{thm1} and \ref{thm2} is as follows. We construct solutions by the variational method. As in \cite{Cao2}\,(see also \cite{Ber}), we introduce two functionals as follows
\begin{equation*}
  E(\zeta):=\frac{1}{2}\int_D \zeta(x)\mathcal{K}\zeta(x)d\nu(x),\ \ \mathcal{F}_\varepsilon(\zeta):=\frac{1}{\varepsilon^2}\int_D F_*(\varepsilon^2\zeta(x))d\nu(x).
\end{equation*}
Then we will show existence of a maximizer of the variational integral
\begin{equation*}
  \mathcal{E}(\zeta)=E(\zeta)-\mathcal{F}_\varepsilon(\zeta)
\end{equation*}
over the following admissible class
\begin{equation*}
\mathcal{A}_{\varepsilon}:=\{\omega\in L^\infty(D)~|~ 0\le \omega \le \frac{\Lambda}{\varepsilon^2}~ \mbox{ a.e. in }D, \int_{D}\omega(x)d\nu(x)=\kappa \},
\end{equation*}
where $\kappa>0$, $\varepsilon>0$, and $\Lambda>1$ is a sufficiently large real number such that $\mathcal{A}_{\varepsilon}$ is not empty. We will show that each maximizer of the energy functional $\mathcal{E}$ over the admissible class $\mathcal{A}_{\varepsilon}$ will yield a desired solution when $\Lambda$ is large enough and $\varepsilon$ is sufficiently small. Since we require that $\mathcal{A}_{\varepsilon}$ is contained in some bounded subset of $ L^\infty(D)$, an absolute maximum for $\mathcal{E}$ over $\mathcal{A}_{\varepsilon}$ can be easily found. However, this restriction may affect the equation that the critical point satisfies. Fortunately, we can prove that the maximizer does not touch this ``boundary'' of $\mathcal{A}_{\varepsilon}$ if $\Lambda$ is chosen to be large enough at first. In the study of asymptotic behavior of $\zeta^\varepsilon$ when $\varepsilon\to 0^+$, our key idea is to expand the energy as precisely as possible. To maximize the energy, these solutions must be concentrated. The proof is an adaptation of techniques of Turkington \cite{Tur83}\,(see also Dekeyser \cite{De2}).

Having constructed the above steady solutions, we are interested in their nonlinear stability. Under some assumptions, we will prove that these steady solutions are stable for the vorticity dynamics \eqref{tran}. The stability is of Liapunov type: it is global in time in the $L^p$-norm on the vorticities. Recall that we can reconstruct the velocity by \eqref{1-4} via the vorticity-stream system $(\zeta, \psi)$. Hence we are led to the following vorticity formulation

\begin{equation}\label{time}
\begin{cases}
  &\partial_t\omega+\text{div}(\omega\mathbf{v})=0\ \ \text{in}\ D, \\
  &\ \mathbf{v}=b^{-1}\nabla^\perp \mathcal{K}\left(\frac{\omega}{b}\right).
\end{cases}
\end{equation}

We interpret this equation in the distributional sense. Recall that $\omega=b\,\zeta$.
\begin{definition}
  Given an initial data $\zeta_0\in L^\infty(D)$, we say that $\zeta \in L^\infty([0,\infty)\times D, \mathbb{R})$ is a weak solution if for every test function $\phi \in C^\infty_{\text{c}}([0,+\infty)\times D)$, one has
  \begin{equation}\label{time2}
    \begin{split}
        & \int_{D}\zeta_0(x) \phi(0,x)d\nu(x)+\int_{0}^{+\infty}\int_D\zeta(t,x)\left(\partial_t\phi(t,x)+\mathbf{v}\cdot \phi(t,x)\right)d\nu(x) dt=0, \\
         &\ \  \mathbf{v}(t,\cdot)=b^{-1}\nabla^\perp \mathcal{K}\zeta\ \ \text{for\, a.e.}\ t\in [0,+\infty).
    \end{split}
  \end{equation}
\end{definition}
A continuous lake $(D,b)$ is said to be smooth if $\partial D \in C^\infty$ and $b\in C^2(\bar{D},(0,+\infty))$. When $(D,b)$ is smooth, weak solutions of the Cauchy problem exist globally and these solutions are unique, see \cite{De2} and the references therein. For $\zeta \in L^1(D,\nu)$, we use $\mathcal{R}(\zeta)$ to denote all rearrangements of $\zeta$ with respect to measure $\nu$. We have the following stability criterion.
\begin{theorem}\label{thm3}
  Let $(D,b)$ be a smooth lake. Let $\zeta_0\in L^\infty(D)$, and suppose $\zeta_0$ is a strict local maximiser of kinetic energy $E$ relative to $\mathcal{R}(\zeta_0)$ in $L^p(D)$ for some $p\in(1,+\infty)$. Then $\zeta_0$ is a steady weak solution of equation \eqref{time2}. Moreover, $\zeta$ is stable in the following sense:

  For each $\epsilon>0$, there exists $\delta>0$ such that, if ${\zeta}\in L^\infty([0,\infty)\times D, \mathbb{R})$ is a weak solution of equation \eqref{time2}, and $\|{\zeta}(0,\cdot)-\zeta_0\|_{L^p(D)}<\delta$, then $\|{\zeta}(t,\cdot)-\zeta_0\|_{L^p(D)}<\epsilon$ for all $t>0$.
\end{theorem}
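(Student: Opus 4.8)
The plan is to follow the energy--Casimir/rearrangement approach to Arnol'd-type stability, adapted to the weighted transport equation \eqref{time2}. First I would verify that a strict local maximizer $\zeta_0$ of $E$ over $\mathcal{R}(\zeta_0)$ is indeed a steady weak solution. By the Lagrange-multiplier theory for maximization over rearrangement classes, such a maximizer satisfies a functional relation $\zeta_0=\phi(\mathcal{K}\zeta_0)$ a.e.\ for some increasing $\phi$; writing $\psi_0=\mathcal{K}\zeta_0$ and using $\mathrm{div}(\nabla^\perp g)=0$ together with $\nabla\zeta_0\parallel\nabla\psi_0$, one gets $\mathrm{div}(\zeta_0\nabla^\perp\mathcal{K}\zeta_0)=0$, which is exactly \eqref{time2} with $\partial_t\zeta_0=0$.

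Two conservation properties drive the stability argument. Because the velocity in \eqref{time} is divergence-free with respect to $\nu$ (indeed $\mathrm{div}(b\mathbf v)=0$), the flow transports $\zeta$ by $\nu$-measure-preserving maps, so any weak solution stays in a single rearrangement class: $\zeta(t,\cdot)\in\mathcal{R}(\zeta(0,\cdot))$ for all $t$. Moreover the kinetic energy $E$ is conserved along weak solutions, which I would check by testing \eqref{time2} against $\mathcal{K}\zeta$, using the symmetry of $\mathcal{K}$ and $\nabla^\perp\mathcal{K}\zeta\cdot\nabla\mathcal{K}\zeta=0$. I would also record the continuity of $t\mapsto\zeta(t,\cdot)$ into $L^p(D)$ needed to define a first exit time.

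The stability itself I would prove by contradiction. Suppose it fails: there exist $\epsilon_0>0$, weak solutions $\zeta_n$ with $\zeta_n(0,\cdot)\to\zeta_0$ in $L^p$, and times $t_n$ with $\|w_n-\zeta_0\|_{L^p}=\epsilon_0$, where $w_n:=\zeta_n(t_n,\cdot)$ and $\epsilon_0$ is smaller than the radius of the neighborhood in the strict-maximizer hypothesis. By the two conservation laws, $w_n\in\mathcal{R}(\zeta_n(0,\cdot))$ and $E(w_n)=E(\zeta_n(0,\cdot))\to E(\zeta_0)$. Since $1<p<\infty$, the sequence $\{w_n\}$ (bounded, with the same $L^p$-norm as $\zeta_n(0,\cdot)$) has a subsequence with $w_n\rightharpoonup\bar w$. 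Because $\mathcal{K}$ is compact on $L^p(D,\nu)$ (the Green kernel is smoothing on the bounded domain $D$), $\mathcal{K}w_n\to\mathcal{K}\bar w$ strongly, so $E(w_n)\to E(\bar w)$ and hence $E(\bar w)=E(\zeta_0)$.

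The remaining issue, which I expect to be the main obstacle, is that the $w_n$ lie in the \emph{varying} classes $\mathcal{R}(\zeta_n(0,\cdot))$ rather than in $\mathcal{R}(\zeta_0)$, and that I only have weak convergence. To transfer, I would use that $\zeta_n(0,\cdot)\to\zeta_0$ in $L^p$ forces the decreasing rearrangements to converge, $\zeta_n(0,\cdot)^*\to\zeta_0^*$; realizing $w_n=\zeta_n(0,\cdot)^*\circ T_n$ by a $\nu$-measure-preserving map $T_n$ (valid since $(D,\nu)$ is nonatomic) and setting $\hat w_n:=\zeta_0^*\circ T_n\in\mathcal{R}(\zeta_0)$ gives $\|\hat w_n-w_n\|_{L^p}=\|\zeta_0^*-\zeta_n(0,\cdot)^*\|_{L^p}\to0$. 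Thus $\hat w_n\rightharpoonup\bar w$, $E(\hat w_n)\to E(\zeta_0)$, and $\|\hat w_n-\zeta_0\|_{L^p}\to\epsilon_0$. Now $\bar w$ lies in the weak closure $\overline{\mathcal{R}(\zeta_0)}^{\,w}$, a convex weakly compact set, and maximizes the strictly convex functional $E$ over it; by Burton's extreme-point characterization a maximizer of a strictly convex functional over $\overline{\mathcal{R}(\zeta_0)}^{\,w}$ must lie in $\mathcal{R}(\zeta_0)$. Hence $\bar w\in\mathcal{R}(\zeta_0)$, so $\|\hat w_n\|_{L^p}=\|\bar w\|_{L^p}$, and by uniform convexity of $L^p$ the weak convergence upgrades to $\hat w_n\to\bar w$ strongly. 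Therefore $\|\bar w-\zeta_0\|_{L^p}=\epsilon_0>0$ while $E(\bar w)=E(\zeta_0)$ and $\bar w\in\mathcal{R}(\zeta_0)$ lies in the prescribed neighborhood, contradicting that $\zeta_0$ is a strict local maximizer. The two delicate points I would treat with care are the measure-preserving realization used in the transfer step and the rigorous justification of energy conservation for merely $L^\infty$ weak solutions.
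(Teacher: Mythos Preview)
Your reduction to the functional relation $\zeta_0=\phi(\mathcal{K}\zeta_0)$ and the recognition that the transport structure forces $\zeta(t,\cdot)\in\mathcal{R}(\zeta(0,\cdot))$ together with energy conservation match the paper exactly; this part is fine.

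For the stability, however, the paper follows a different and more direct route (Burton's ``shadow'' argument), and your compactness--contradiction scheme contains a real gap. You assert that the weak limit $\bar w$ ``maximizes the strictly convex functional $E$'' over $\overline{\mathcal{R}(\zeta_0)}^{\,w}$, but all you have established is $E(\bar w)=E(\zeta_0)$. Since $\zeta_0$ is only assumed to be a \emph{strict local} maximizer over $\mathcal{R}(\zeta_0)$, there is no reason for $E(\zeta_0)$ to equal $\sup_{\overline{\mathcal{R}(\zeta_0)}^{\,w}}E$, and hence no reason for $\bar w$ to be an extreme point of the weak closure. Without that, you cannot conclude $\bar w\in\mathcal{R}(\zeta_0)$, the norm equality $\|\hat w_n\|_{L^p}=\|\bar w\|_{L^p}$ fails, and the upgrade from weak to strong convergence breaks down. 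Your argument would go through verbatim for a global maximizer, but not under the stated hypothesis.

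The paper avoids this difficulty entirely. Given a perturbed solution $\zeta$, it introduces the auxiliary function $\tilde\zeta$ obtained by transporting the \emph{unperturbed} initial datum $\zeta_0$ along the \emph{perturbed} velocity field $b^{-1}\nabla^\perp\mathcal{K}\zeta$. Two facts follow immediately: $\tilde\zeta(t,\cdot)\in\mathcal{R}(\zeta_0)$ for every $t$, and $\|\tilde\zeta(t,\cdot)-\zeta(t,\cdot)\|_{L^p}=\|\zeta_0-\zeta(0,\cdot)\|_{L^p}$, since both are carried by the same measure-preserving flow. One then only needs a uniform energy gap $E(g)<E(\zeta_0)-\eta$ on the sphere $\mathcal{R}(\zeta_0)\cap\partial\mathcal{N}_{\delta_1}(\zeta_0)$, which comes from the coincidence of the relative weak and strong $L^p$ topologies on $\mathcal{R}(\zeta_0)$ together with weak continuity of $E$. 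A short energy comparison and continuity in $t$ keep $\tilde\zeta(t,\cdot)$ inside $\mathcal{N}_{\delta_1}(\zeta_0)$, and the triangle inequality finishes. No passage to weak limits and no global maximality are needed. If you want to salvage your approach, you would have to replace the extreme-point step by this energy-gap/shadow mechanism; the measure-preserving transfer $\hat w_n=\zeta_0^*\circ T_n$ you describe is correct but ultimately unnecessary once the shadow trick is in place.
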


This result is in fact a counterpart of Burton's criterion \cite{Bur2} in the lake equations. Its proof is based on conservation of kinetic energy and transport of vorticity, and the idea can be traced back to Arnol'd \cite{Ar1,Ar2,Ar3}. With this criterion in hand, we can prove that the solutions constructed above are stable steady solutions under some assumptions. Indeed, one can show that $\mathcal{R}(\zeta^\varepsilon)\subset \mathcal{A}_{\varepsilon}$. Notice that $\mathcal{F}_\varepsilon$ is constant on $\mathcal{R}(\zeta^\varepsilon)$. It follows that $\zeta^\varepsilon$ is actually a maximiser of kinetic energy $E$ relative to $\mathcal{R}(\zeta^\varepsilon)$. Hence we have

\begin{theorem}\label{thm4}
  Let $(D,b)$ be a smooth lake. Let $\zeta^\varepsilon$ be the solution obtained in Theorem \ref{thm1} or Theorem \ref{thm2}. Suppose $\zeta^\varepsilon$ is a strict local maximiser of kinetic energy relative to $\mathcal{R}(\zeta^\varepsilon)$ in $L^p(D)$ for some $p\in(1,+\infty)$. Then $\zeta^\varepsilon$ is a steady weak solution of equation \eqref{time2}. Moreover, $\zeta^\varepsilon$ is stable in the following sense:

  For each $\epsilon>0$, there exists $\delta>0$ such that, if $\tilde{\zeta}\in L^\infty([0,\infty)\times D, \mathbb{R})$ is a solution of equation \eqref{time2}, and $\|\tilde{\zeta}(0,\cdot)-\zeta^\varepsilon\|_{L^p(D)}<\delta$, then $\|\tilde{\zeta}(t,\cdot)-\zeta^\varepsilon\|_{L^p(D)}<\epsilon$ for all $t>0$.
\end{theorem}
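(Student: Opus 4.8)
The plan is to deduce Theorem \ref{thm4} directly from the stability criterion of Theorem \ref{thm3} together with the variational structure of $\zeta^\varepsilon$ already built into the construction of Theorems \ref{thm1} and \ref{thm2}. The essential observation is that $\zeta^\varepsilon$ was obtained as a maximizer of $\mathcal{E}=E-\mathcal{F}_\varepsilon$ over the admissible class $\mathcal{A}_\varepsilon$, and that the penalization term $\mathcal{F}_\varepsilon$ is invariant under measure-preserving rearrangements. Thus the only content left to verify is that the hypothesis of Theorem \ref{thm3} is met, namely that $\zeta^\varepsilon$ is a strict local maximizer of the kinetic energy $E$ alone, relative to its own rearrangement class $\mathcal{R}(\zeta^\varepsilon)$ in the $L^p$-topology. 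Since this is precisely the standing assumption of Theorem \ref{thm4}, the proof is essentially a verification that the remaining pieces fit together.

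First I would record the two elementary facts that make the reduction work. The inclusion $\mathcal{R}(\zeta^\varepsilon)\subset\mathcal{A}_\varepsilon$ holds because rearrangement preserves the pointwise bounds $0\le\omega\le\Lambda/\varepsilon^2$ and the total mass $\int_D\omega\,d\nu=\kappa$ defining $\mathcal{A}_\varepsilon$; indeed a rearrangement with respect to $\nu$ preserves the distribution function and hence both the essential supremum and the integral. Next, because $F_*$ is applied pointwise and $\mathcal{F}_\varepsilon(\zeta)=\varepsilon^{-2}\int_D F_*(\varepsilon^2\zeta)\,d\nu$ depends only on the distribution of $\zeta$ under $\nu$, the functional $\mathcal{F}_\varepsilon$ is constant on $\mathcal{R}(\zeta^\varepsilon)$. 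Consequently, on the set $\mathcal{R}(\zeta^\varepsilon)$ one has $\mathcal{E}=E-\mathrm{const}$, so maximizing $\mathcal{E}$ and maximizing $E$ over $\mathcal{R}(\zeta^\varepsilon)$ are equivalent problems. This is the step that upgrades $\zeta^\varepsilon$ from a constrained critical point of $\mathcal{E}$ on $\mathcal{A}_\varepsilon$ to a genuine energy maximizer on the rearrangement class, which is exactly the object Theorem \ref{thm3} is phrased for.

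With these two facts in place, the conclusion follows by invoking Theorem \ref{thm3} with $\zeta_0=\zeta^\varepsilon$. The hypothesis ``$\zeta^\varepsilon$ is a strict local maximizer of $E$ relative to $\mathcal{R}(\zeta^\varepsilon)$ in $L^p(D)$'' is assumed in the statement of Theorem \ref{thm4}, and $\zeta^\varepsilon\in L^\infty(D)$ by construction, so all requirements of Theorem \ref{thm3} are satisfied. Theorem \ref{thm3} then yields immediately that $\zeta^\varepsilon$ is a steady weak solution of \eqref{time2} and that it is Liapunov-stable in the stated $L^p$-sense: for each $\epsilon>0$ there is $\delta>0$ so that any weak solution $\tilde\zeta$ with $\|\tilde\zeta(0,\cdot)-\zeta^\varepsilon\|_{L^p(D)}<\delta$ satisfies $\|\tilde\zeta(t,\cdot)-\zeta^\varepsilon\|_{L^p(D)}<\epsilon$ for all $t>0$. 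I would also remark that smoothness of the lake $(D,b)$ is needed here only to guarantee global existence and uniqueness of weak solutions to the Cauchy problem, which is what makes the stability statement meaningful.

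I do not anticipate a substantial obstacle in this particular theorem, since the real analytic work is deferred to Theorem \ref{thm3} (the abstract stability criterion, whose proof rests on conservation of kinetic energy and transport of vorticity along the flow) and to the construction in Theorems \ref{thm1}–\ref{thm2}. The only point deserving care is the rearrangement-invariance of $\mathcal{F}_\varepsilon$ and the inclusion $\mathcal{R}(\zeta^\varepsilon)\subset\mathcal{A}_\varepsilon$; both are routine but must be checked against the correct measure $\nu$ (not Lebesgue measure $m$), since rearrangements throughout are taken with respect to $\nu$. If one wished to remove the ``strict local maximizer'' from the hypotheses and prove it instead, that would be genuinely harder and would require a second-order analysis of $E$ on $\mathcal{R}(\zeta^\varepsilon)$; but as the theorem is stated, that property is assumed, so the proof is a clean application of the criterion.
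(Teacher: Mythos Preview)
Your proposal is correct and follows essentially the same route as the paper: one observes that $\mathcal{R}(\zeta^\varepsilon)\subset\mathcal{A}_\varepsilon$ and that $\mathcal{F}_\varepsilon$ is constant on $\mathcal{R}(\zeta^\varepsilon)$, so that $\zeta^\varepsilon$ maximizes $E$ on its rearrangement class, and then invokes Theorem~\ref{thm3} directly under the assumed strict-local-maximizer hypothesis. Your additional remarks (on the role of the measure $\nu$, the smoothness of the lake, and the difficulty of dispensing with the strictness assumption) are accurate and consistent with the paper's own comments.
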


We are not able to prove that $\zeta^\varepsilon$ is a strict local maximizer. However, we would like to mention that it can be reduced to the local uniqueness for an related elliptic problem. We refer the reader to Cao et al \cite{Cao1} for a similar situation in vortex patch problem.

This paper is organized as follows. In Section 2, we give the proof of Theorem \ref{thm1}. The proof of Theorem \ref{thm2} is presented in Section 3, which is similar to the previous case but the computations turn out to be more involved due to the interaction of the vortex with the boundary. In Section 4, we give the proofs of Theorems \ref{thm3} and \ref{thm4}.

\section{Proof of Theorem \ref{thm1}}
In this section, we prove Theorem \ref{thm1}. To do this, we split the proof into several lemmas.

 Notice that assumption (H2) implies $\lim_{s\to +\infty}f(s)=+\infty$ (see \cite{Ni}). It is not difficult to verifies that (H2) is in fact equivalent to
\begin{itemize}
 \item[ (H2)$'$] There exists $\vartheta_1\in(0,1)$ such that
\[F_*(s) \ge \vartheta_1 s\, f^{-1}(s),\,\,\forall\,s\geq0.\]
\end{itemize}

\subsection{Variational problem}
 Let $\kappa>0$ be fixed and $\varepsilon>0$ be a parameter. Define
\begin{equation*}
\mathcal{A}_{\varepsilon,\Lambda}:=\{\zeta\in L^\infty(D)~|~ 0\le \zeta \le \frac{\Lambda}{\varepsilon^2}~ \mbox{ a.e. in }D, \int_{D}\zeta(x)d\nu (x)=\kappa \},
\end{equation*}
where $\Lambda>\max\{1,\kappa \varepsilon^2/|D|\}$ is a positive number. Note that $\mathcal{A}_{\varepsilon,\Lambda}$ is not empty.
Consider the maximization problem of the following functional over $\mathcal{A}_{\varepsilon,\Lambda}$
$$\mathcal{E}(\zeta)=\frac{1}{2}\int_D \zeta(x)\mathcal{K}\zeta(x)d\nu(x)-\frac{1}{\varepsilon^2}\int_D F_*(\varepsilon^2\zeta(x))d\nu(x),\,\,\zeta\in \mathcal{A}_{\varepsilon,\Lambda}.$$
Recall that
\begin{equation*}
  E(\zeta):=\frac{1}{2}\int_D \zeta(x)\mathcal{K}\zeta(x)d\nu(x),\ \ \mathcal{F}_\varepsilon(\zeta):=\frac{1}{\varepsilon^2}\int_D F_*(\varepsilon^2\zeta(x))d\nu(x).
\end{equation*}
One see that $\mathcal{F}_\varepsilon$ is a convex functional over $\mathcal{A}_{\varepsilon,\Lambda}$. Denote
\begin{equation*}
  K(x,y)=b(x)G(x,y)+R(x,y).
\end{equation*}
An absolute maximum for $\mathcal{E}$ over $\mathcal{A}_{\varepsilon,\Lambda}$  can be easily found.

\begin{lemma}\label{lem1}
	$\mathcal{E}$ is bounded from above and attains its maximum value over $\mathcal{A}_{\varepsilon,\Lambda}$.
\end{lemma}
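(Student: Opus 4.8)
The plan is to argue by the direct method of the calculus of variations, so I would establish boundedness from above first and then produce a maximizer by weak compactness. For boundedness, recall that $f^{-1}\ge 0$ on $\mathbb{R}$, so $F_*(s)=\int_0^s f^{-1}(r)\,dr\ge 0$ for $s\ge 0$; hence $\mathcal{F}_\varepsilon(\zeta)\ge 0$ on $\mathcal{A}_{\varepsilon,\Lambda}$ and $\mathcal{E}(\zeta)\le E(\zeta)$. It therefore suffices to bound $E$. Using the kernel of a continuous lake, $E(\zeta)=\tfrac12\int_D\int_D \zeta(x)K(x,y)\zeta(y)\,d\nu(x)\,d\nu(y)$ with $K(x,y)=b(x)G(x,y)+R(x,y)$. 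Since $R$ is bounded and $\int_D\zeta\,d\nu=\kappa$, the $R$-contribution is at most $\tfrac12\|R\|_\infty\kappa^2$; since $b\le\sup_D b$ and the Dirichlet Green's function $G$ has only a logarithmic singularity (so that $K\in L^2(D\times D,\nu\otimes\nu)$), the operator $\mathcal{K}$ is Hilbert--Schmidt, hence bounded on $L^2(D,\nu)$, and $E(\zeta)\le\tfrac12\|\mathcal{K}\|\,\|\zeta\|_{L^2(\nu)}^2\le \tfrac12\|\mathcal{K}\|\,(\Lambda/\varepsilon^2)^2\,\nu(D)$. Thus $\mathcal{E}$ is bounded above by a constant depending only on $\varepsilon,\Lambda,\kappa$.

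Next I would verify that $\mathcal{A}_{\varepsilon,\Lambda}$ is weakly sequentially compact in the Hilbert space $L^2(D,\nu)$. It is convex, and bounded in $L^\infty(D)$, hence bounded in $L^2(D,\nu)$ because $\nu(D)<\infty$. It is strongly closed: if $\zeta_n\to\zeta$ in $L^2(D,\nu)$ then a subsequence converges $\nu$-a.e., giving $0\le\zeta\le\Lambda/\varepsilon^2$ a.e., while $\zeta\mapsto\int_D\zeta\,d\nu$ is continuous (as $1\in L^2(D,\nu)$), so $\int_D\zeta\,d\nu=\kappa$. Being convex and strongly closed, $\mathcal{A}_{\varepsilon,\Lambda}$ is weakly closed, and being bounded in a reflexive space it is weakly sequentially compact. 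I then take a maximizing sequence $\{\zeta_n\}$ for $\mathcal{E}$ and extract a subsequence with $\zeta_n\rightharpoonup\zeta^\ast\in\mathcal{A}_{\varepsilon,\Lambda}$.

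Finally I would show $\mathcal{E}$ is weakly upper semicontinuous, which closes the argument. For the energy term, the definition $\langle\mathcal{K}u,v\rangle_H=\int_D uv\,d\nu$ identifies $\mathcal{K}$ with (a constant multiple of) the adjoint of the compact embedding $H(D)\hookrightarrow L^2(D,\nu)$, so $\mathcal{K}:L^2(D,\nu)\to H(D)$ is compact; writing $E(\zeta)=\tfrac12\langle\mathcal{K}\zeta,\mathcal{K}\zeta\rangle_H=\tfrac12\|\mathcal{K}\zeta\|_H^2$, the weak convergence $\zeta_n\rightharpoonup\zeta^\ast$ yields strong convergence $\mathcal{K}\zeta_n\to\mathcal{K}\zeta^\ast$ in $H(D)$, hence $E(\zeta_n)\to E(\zeta^\ast)$, i.e. $E$ is weakly continuous. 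For the penalization term, $F_*$ is convex (its derivative $f^{-1}$ is nondecreasing by (H1)), so $\mathcal{F}_\varepsilon$ is a convex, strongly continuous, nonnegative integral functional, therefore weakly lower semicontinuous; consequently $-\mathcal{F}_\varepsilon$ is weakly upper semicontinuous. Combining, $\mathcal{E}=E-\mathcal{F}_\varepsilon$ is weakly upper semicontinuous, so $\mathcal{E}(\zeta^\ast)\ge\limsup_n\mathcal{E}(\zeta_n)=\sup_{\mathcal{A}_{\varepsilon,\Lambda}}\mathcal{E}$, and $\zeta^\ast$ is the desired maximizer. The only genuinely analytic point—the main obstacle—is the compactness of $\mathcal{K}$ (equivalently the weak continuity of $E$), which rests on the logarithmic nature of the Green kernel; the convexity, closedness and boundedness facts are routine.
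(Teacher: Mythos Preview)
Your proof is correct and takes essentially the same direct-method route as the paper: bound $\mathcal{E}$ above, extract a weakly convergent maximizing subsequence, and pass to the limit using weak continuity of $E$ together with weak lower semicontinuity of the convex functional $\mathcal{F}_\varepsilon$. The only cosmetic difference is that the paper works with weak-star convergence in $L^\infty(D)$ and appeals directly to $K\in L^{1}(D\times D)$ for the convergence $E(\zeta_j)\to E(\bar\zeta)$, whereas you work with weak convergence in $L^2(D,\nu)$ and invoke compactness of $\mathcal{K}$ (via Hilbert--Schmidt or the adjoint-of-embedding identification).
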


\begin{proof}
Since $G\in L^1(D\times D)$ and $R\in L^1(D\times D)$, we have
\begin{equation*}
  E(\zeta)\leq \frac{\Lambda^2(\max_D b)^3}{2\varepsilon^4}\|G\|_{L^1(D\times D)}+\frac{\Lambda^2(\max_D b)^2}{2\varepsilon^4}\|R\|_{L^1(D\times D)},\ \ \forall\,\zeta\in \mathcal{A}_{\varepsilon,\Lambda}.
\end{equation*}
On the other hand, we have
\begin{equation*}
  |\mathcal{F}_\varepsilon(\zeta)|\leq \frac{1}{\varepsilon^2}F_*(\Lambda)|D|,\,\,\ \ \forall\,\zeta\in \mathcal{A}_{\varepsilon,\Lambda}.
\end{equation*}
Therefore $\mathcal{E}$ is bounded from above over $\mathcal{A}_{\varepsilon,\Lambda}$. Let $\{\zeta_{j}\}\subset \mathcal{A}_{\varepsilon,\Lambda}$ such that as $j\to +\infty$
$$\mathcal{E}(\zeta_{j}) \to \sup_{\zeta\in \mathcal{A}_{\varepsilon,\Lambda}}\mathcal{E}({\zeta}).$$
Since $\mathcal{A}_{\varepsilon,\Lambda}$ is a sequentially compact subset of $L^\infty(D)$ in the weak-star topology, we may assume that, up to a subsequence, $\zeta_j\to\bar{\zeta}$ weakly star in $L^\infty(D)$ as $j\to\infty$ for some $\bar{\zeta}\in \mathcal{A}_{\varepsilon,\Lambda}$. Next we show that $\bar{\zeta}$ is in fact a maximizer of $\mathcal{E}$ over $\mathcal{A}_{\varepsilon,\Lambda}$.  To this end, it suffices to prove
	\[\mathcal{E}(\bar{\zeta})\geq\limsup_{j\to\infty}\mathcal{E}(\zeta_j).\]
	Since $K(\cdot,\cdot)\in L^{1}(D\times D)$, there holds
	\begin{equation}\label{Evar}
	\lim_{j\to\infty}E(\zeta_j)=E(\bar{\zeta}).
	\end{equation}
	On the other hand, since $\mathcal{F}_\varepsilon$ is a convex functional, one has
	\begin{equation}\label{Fvar}
	\liminf_{j\to +\infty} \mathcal{F}_\varepsilon(\zeta_j)\ge \mathcal{F}_\varepsilon(\bar\zeta).
	\end{equation}
	Combining \eqref{Evar} and \eqref{Fvar} we get the desired result.
\end{proof}

The next lemma gives the profile of the maximizer of $\mathcal{E}$. We shall use $\chi_{_A}$ to denote the characteristic function of a given set $A\subset\mathbb{R}^2$.
\begin{lemma}\label{lem2}
	Let $\zeta^{\varepsilon,\Lambda}$ be a maximizer of $\mathcal{E}$ over $\mathcal{A}_{\varepsilon,\Lambda}$. Then there exists a number $\mu^{\varepsilon,\Lambda}\in \mathbb{R}$ such that
	\begin{equation}\label{2-3}
	\zeta^{\varepsilon,\Lambda}=\frac{1}{\varepsilon^2}f(\psi^{\varepsilon,\Lambda}){\chi}_{_{\{x\in D\mid0<\psi^{\varepsilon,\Lambda}(x)<f^{-1}(\Lambda)\}}}+\frac{\Lambda}{\varepsilon^2}\chi_{_{\{x\in D\mid\psi^{\varepsilon,\Lambda}(x) \geq f^{-1}(\Lambda)\}}} \ \ \mbox{ a.e. in }  D,
	\end{equation}
	where
	\begin{equation}\label{2-4}
	\psi^{\varepsilon,\Lambda}:=\mathcal{K}\zeta^{\varepsilon,\Lambda}-\mu^{\varepsilon,\Lambda}.
	\end{equation}
	Moreover, $\mu^{\varepsilon,\Lambda}$ has the following lower bound
	\begin{equation}\label{2-5}
	\mu^{\varepsilon,\Lambda} \ge -f^{-1}(\Lambda)-\kappa \|R\|_{L^{\infty}(D\times D)}.
	\end{equation}
\end{lemma}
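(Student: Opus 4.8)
The plan is to derive the Euler--Lagrange equation for the constrained maximization problem by a standard variational argument using admissible perturbations that respect the two constraints defining $\mathcal{A}_{\varepsilon,\Lambda}$, namely the pointwise bound $0\le\zeta\le\Lambda/\varepsilon^2$ and the mass constraint $\int_D\zeta\,d\nu=\kappa$. Since $\mathcal{A}_{\varepsilon,\Lambda}$ is convex and $\zeta^{\varepsilon,\Lambda}$ is a maximizer, for any competitor $\zeta\in\mathcal{A}_{\varepsilon,\Lambda}$ the one-sided derivative $\frac{d}{dt}\big|_{t=0^+}\mathcal{E}\big((1-t)\zeta^{\varepsilon,\Lambda}+t\zeta\big)\le 0$. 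Computing this derivative, the quadratic term $E$ contributes $\int_D(\zeta-\zeta^{\varepsilon,\Lambda})\,\mathcal{K}\zeta^{\varepsilon,\Lambda}\,d\nu$ (using the symmetry of $\mathcal{K}$), while the convex term $\mathcal{F}_\varepsilon$ contributes $-\int_D(\zeta-\zeta^{\varepsilon,\Lambda})\,F_*'(\varepsilon^2\zeta^{\varepsilon,\Lambda})\,d\nu = -\int_D(\zeta-\zeta^{\varepsilon,\Lambda})\,f^{-1}(\varepsilon^2\zeta^{\varepsilon,\Lambda})\,d\nu$, where I use $F_*'=f^{-1}$ from the definition of the conjugate function. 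Writing $\psi^{\varepsilon,\Lambda}:=\mathcal{K}\zeta^{\varepsilon,\Lambda}-\mu^{\varepsilon,\Lambda}$ for a Lagrange multiplier $\mu^{\varepsilon,\Lambda}$ attached to the mass constraint, the first-order condition becomes
\[
\int_D\big(\zeta-\zeta^{\varepsilon,\Lambda}\big)\Big(\psi^{\varepsilon,\Lambda}-f^{-1}(\varepsilon^2\zeta^{\varepsilon,\Lambda})\Big)\,d\nu\le 0
\quad\text{for all }\zeta\in\mathcal{A}_{\varepsilon,\Lambda}.
\]

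The next step is the standard bathtub/pointwise analysis of this inequality. Because $\zeta$ may be varied freely pointwise subject only to the box constraint $0\le\zeta\le\Lambda/\varepsilon^2$ and the fixed mass, the inequality forces a bang-bang structure governed by the sign of $\psi^{\varepsilon,\Lambda}-f^{-1}(\varepsilon^2\zeta^{\varepsilon,\Lambda})$. Concretely: on the set where $0<\zeta^{\varepsilon,\Lambda}<\Lambda/\varepsilon^2$ (the interior of the box), one can perturb in both directions, so the integrand factor must vanish, giving $f^{-1}(\varepsilon^2\zeta^{\varepsilon,\Lambda})=\psi^{\varepsilon,\Lambda}$, i.e.\ $\zeta^{\varepsilon,\Lambda}=\frac{1}{\varepsilon^2}f(\psi^{\varepsilon,\Lambda})$; on the set where $\zeta^{\varepsilon,\Lambda}=\Lambda/\varepsilon^2$ one can only decrease, forcing $\psi^{\varepsilon,\Lambda}\ge f^{-1}(\Lambda)$; and on $\{\zeta^{\varepsilon,\Lambda}=0\}$ one can only increase, forcing $\psi^{\varepsilon,\Lambda}\le f^{-1}(0^+)$, which by the convention $f^{-1}\equiv 0$ on $(-\infty,f(0^+)]$ and (H1) means $\psi^{\varepsilon,\Lambda}\le 0$. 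Since $f(s)=0$ for $s\le 0$ and $f$ is strictly increasing on $(0,\infty)$, these three regimes assemble exactly into formula \eqref{2-3}: $\zeta^{\varepsilon,\Lambda}=\frac{1}{\varepsilon^2}f(\psi^{\varepsilon,\Lambda})$ where $0<\psi^{\varepsilon,\Lambda}<f^{-1}(\Lambda)$, saturated at $\Lambda/\varepsilon^2$ where $\psi^{\varepsilon,\Lambda}\ge f^{-1}(\Lambda)$, and zero where $\psi^{\varepsilon,\Lambda}\le 0$.

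For the lower bound \eqref{2-5} on $\mu^{\varepsilon,\Lambda}$, the idea is to exploit where the vorticity actually lives. The support of $\zeta^{\varepsilon,\Lambda}$ is contained in $\{\psi^{\varepsilon,\Lambda}>0\}$, so there exists (at least in a measure-theoretic sense) a point, or a positive-measure set, where $\psi^{\varepsilon,\Lambda}\ge 0$; equivalently, since $\int_D\zeta^{\varepsilon,\Lambda}\,d\nu=\kappa>0$ the positivity set is nonempty. On that set $\mathcal{K}\zeta^{\varepsilon,\Lambda}=\psi^{\varepsilon,\Lambda}+\mu^{\varepsilon,\Lambda}\ge\mu^{\varepsilon,\Lambda}$, while an upper estimate for $\mathcal{K}\zeta^{\varepsilon,\Lambda}$ comes from the kernel representation $\mathcal{K}\zeta^{\varepsilon,\Lambda}(x)=\int_D K(x,y)\zeta^{\varepsilon,\Lambda}(y)\,d\nu(y)$ together with $G\ge 0$ and the bound $|R|\le\|R\|_{L^\infty(D\times D)}$; but since I need a lower bound on $\mu$, the sharper route is to evaluate at a point where $\psi^{\varepsilon,\Lambda}\le f^{-1}(\Lambda)$ (such points exist off the saturation set, or one uses that $\zeta^{\varepsilon,\Lambda}\not\equiv\Lambda/\varepsilon^2$ because $\Lambda>\kappa\varepsilon^2/|D|$). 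There $\mu^{\varepsilon,\Lambda}=\mathcal{K}\zeta^{\varepsilon,\Lambda}-\psi^{\varepsilon,\Lambda}\ge -f^{-1}(\Lambda)+\int_D K(x,y)\zeta^{\varepsilon,\Lambda}(y)\,d\nu(y)$, and bounding the Green's part below by zero and the correction part below by $-\kappa\|R\|_{L^\infty(D\times D)}$ yields precisely \eqref{2-5}.

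The main obstacle I anticipate is the careful measure-theoretic justification of the pointwise bang-bang reasoning: turning the single scalar inequality over all competitors into a.e.\ pointwise conclusions requires choosing perturbations supported on sets of the form $\{f^{-1}(\varepsilon^2\zeta^{\varepsilon,\Lambda})>\psi^{\varepsilon,\Lambda}+c\}$ and transferring mass to compensate the constraint, and one must verify these sets have the claimed properties and that the multiplier $\mu^{\varepsilon,\Lambda}$ is well-defined (independent of the compensating set). A secondary subtlety is the regularity needed to make $\psi^{\varepsilon,\Lambda}$ pointwise meaningful: since $\zeta^{\varepsilon,\Lambda}\in L^\infty(D)\subset L^p(D,d\nu)$, the definition of $\mathcal{K}$ and the kernel representation of a continuous lake guarantee $\mathcal{K}\zeta^{\varepsilon,\Lambda}$ is continuous (the Green's-function term lies in $W^{2,p}_{\text{loc}}$ and $R$ is continuous), so $\psi^{\varepsilon,\Lambda}$ is continuous and the level sets appearing in \eqref{2-3} are genuine open/closed sets up to null sets—this is what makes the whole argument rigorous and I would state it explicitly before extracting the pointwise identities.
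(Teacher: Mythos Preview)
Your approach is essentially identical to the paper's: the paper also takes convex variations $\zeta_{(s)}=\zeta^{\varepsilon,\Lambda}+s(\zeta-\zeta^{\varepsilon,\Lambda})$, obtains the same first-order inequality, and then invokes the bathtub principle (citing Lieb--Loss, \S1.14) to extract the pointwise relations \eqref{2-6}, with $\mu^{\varepsilon,\Lambda}$ given explicitly as
\[
\mu^{\varepsilon,\Lambda}=\inf\Big\{s\in\mathbb{R}\ \Big|\ \big|\{x\in D:\ \mathcal{K}\zeta^{\varepsilon,\Lambda}-f^{-1}(\varepsilon^2\zeta^{\varepsilon,\Lambda})>s\}\big|\le \tfrac{\kappa\varepsilon^2}{\Lambda}\Big\},
\]
which resolves the well-definedness issue you flag. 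For the lower bound \eqref{2-5} the paper argues by contradiction---if $\mu^{\varepsilon,\Lambda}<-f^{-1}(\Lambda)-\kappa\|R\|_{L^\infty}$ then $\psi^{\varepsilon,\Lambda}\ge f^{-1}(\Lambda)$ \emph{everywhere}, forcing $\zeta^{\varepsilon,\Lambda}\equiv\Lambda/\varepsilon^2$ and hence $\Lambda=\kappa\varepsilon^2/|D|$---which is just the contrapositive of your direct argument (pick a point off the saturation set, bound $\mathcal{K}\zeta^{\varepsilon,\Lambda}$ below via $G\ge 0$ and $R\ge -\|R\|_{L^\infty}$). One small correction: your displayed first-order condition should initially read $\mathcal{K}\zeta^{\varepsilon,\Lambda}$, not $\psi^{\varepsilon,\Lambda}$; the constant $\mu^{\varepsilon,\Lambda}$ enters only afterwards, through the bathtub step, precisely because $\int_D(\zeta-\zeta^{\varepsilon,\Lambda})\,d\nu=0$.
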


\begin{proof}
	Consider a family of variations of $\zeta^{\varepsilon,\Lambda}$ as follows
	\begin{equation*}
	\zeta_{(s)}=\zeta^{\varepsilon,\Lambda}+s({\zeta}-\zeta^{\varepsilon,\Lambda}),\ \ \ s\in[0,1],
	\end{equation*}
	where ${\zeta}$ is an arbitrary element of $\mathcal{A}_{\varepsilon,\Lambda}$. Since $\zeta^{\varepsilon,\Lambda}$ is a maximizer, we have
	\begin{equation*}
	0  \ge \frac{d\mathcal E(\zeta_{(s)})}{ds}\bigg|_{s=0^+}
	=\int_{D}({\zeta}-\zeta^{\varepsilon,\Lambda})\left(\mathcal{K}\zeta^{\varepsilon,\Lambda}-f^{-1}(\varepsilon^2\zeta^{\varepsilon,\Lambda})\right)d\nu(x),
	\end{equation*}
	that is,
	\begin{equation*}
	\int_{D}\zeta^{\varepsilon,\Lambda}\left(\mathcal{K}\zeta^{\varepsilon,\Lambda}-f^{-1}(\varepsilon^2\zeta^{\varepsilon,\Lambda})\right)d\nu(x)\ge \int_{D}{\zeta}\left(\mathcal{K}\zeta^{\varepsilon,\Lambda}-f^{-1}(\varepsilon^2\zeta^{\varepsilon,\Lambda})\right)d\nu(x),
	\end{equation*}
	for all ${\zeta}\in \mathcal{A}_{\varepsilon,\Lambda}.$
	By an adaptation of the bathtub principle (see Lieb and Loss \cite{LL}, \S 1.14), we obtain
	\begin{equation}\label{2-6}
\begin{cases}
  ~\mathcal{K}\zeta^{\varepsilon,\Lambda}-\mu^{\varepsilon,\Lambda} \ge f^{-1}(\varepsilon^2\zeta^{\varepsilon,\Lambda})\,\, \,\, &\text{whenever}\,\, ~ \zeta^{\varepsilon,\Lambda}=\frac{\Lambda}{\varepsilon^2}, \\
 ~ \mathcal{K}\zeta^{\varepsilon,\Lambda}-\mu^{\varepsilon,\Lambda} = f^{-1}(\varepsilon^2\zeta^{\varepsilon,\Lambda})\,\,\,\, &\text{whenever}\,\,\, 0<\zeta^{\varepsilon,\Lambda}<\frac{\Lambda}{\varepsilon^2}, \\
 ~ \mathcal{K}\zeta^{\varepsilon,\Lambda}-\mu^{\varepsilon,\Lambda}  \le f^{-1}(\varepsilon^2\zeta^{\varepsilon,\Lambda})\,\,\,\,&\text{whenever}\,\,\,  \zeta^{\varepsilon,\Lambda}=0,
\end{cases}
	\end{equation}
	where $\mu^{\varepsilon,\Lambda}$ is a real number determined by
	$$\mu^{\varepsilon,\Lambda}=\inf\left\{s\in\mathbb R\mid|\{x\in D\mid\mathcal{K}\zeta^{\varepsilon,\Lambda}-f^{-1}(\varepsilon^2\zeta^{\varepsilon,\Lambda})>s\}|\le \frac{\kappa\varepsilon^2}{\Lambda}\right\}.$$
Now the desired form $\eqref{2-3}$ follows immediately.
	
	Next we prove \eqref{2-5}. Suppose not, then for any $x\in D$ we have
 \begin{equation*}
 \begin{split}
    \psi^{\varepsilon,\Lambda}(x)&=\mathcal{K}\zeta^{\varepsilon,\Lambda}(x)-\mu^{\varepsilon,\Lambda}\\
         & =b(x)\int_D G(x,y)\zeta^{\varepsilon,\Lambda}(y)d\nu(y)+\int_D R(x,y)\zeta^{\varepsilon,\Lambda}(y)d\nu(y)-\mu^{\varepsilon,\Lambda}\\
         & \ge \int_D R(x,y)\zeta^{\varepsilon,\Lambda}(y)d\nu(y)+\kappa\|R\|_{L^{\infty}(D\times D)}+f^{-1}(\Lambda)\\
         & \ge f^{-1}(\Lambda),
 \end{split}
 \end{equation*}
 which implies $\zeta^{\varepsilon,\Lambda}=\Lambda\varepsilon^{-2}\chi_{_D}$. Recalling $\int_D\zeta^{\varepsilon,\Lambda}d\nu=\kappa$, we derive $\Lambda=\kappa\varepsilon^2/|D|$, which leads to a contradiction. The proof is thus completed.
\end{proof}	

\subsection{Limiting behavior}
In the following we analyze the limiting behavior of $\zeta^{\varepsilon,\Lambda}$ when $\varepsilon\to 0^+$. As mentioned before, the key idea is to estimate the order of energy as optimally as possible. To begin with, we give a lower bound of $\mathcal{E}(\zeta^{\varepsilon,\Lambda})$.

\begin{lemma}\label{lem3}
For all sufficiently small $\varepsilon>0$, we have
\begin{equation*}
  \mathcal{E}(\zeta^{\varepsilon,\Lambda})\ge \frac{\kappa^2\sup_D b}{4\pi}\ln{\frac{1}{\varepsilon}}-C,
\end{equation*}
where  $C>0$ does not depend on $\varepsilon$ and $\Lambda$.
\end{lemma}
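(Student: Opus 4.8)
The plan is to produce an explicit test function $\zeta_*\in\mathcal{A}_{\varepsilon,\Lambda}$ concentrated near a deepest point and then exploit the inequality $\mathcal{E}(\zeta^{\varepsilon,\Lambda})\ge\mathcal{E}(\zeta_*)$ coming from maximality. Since $\mathcal{S}\cap D\neq\varnothing$, fix a point $x_0\in\mathcal{S}\cap D$, so that $b(x_0)=\sup_D b$ and $x_0$ lies at positive distance from $\partial D$. The natural ansatz is to spread the prescribed mass $\kappa$ uniformly (with respect to $\nu$) over a disc of radius comparable to $\varepsilon$ about $x_0$: take $\zeta_*=c\,\varepsilon^{-2}\chi_{B_{r\varepsilon}(x_0)}$, where $r>0$ is a fixed constant and $c$ is chosen so that $\int_D\zeta_*\,d\nu=\kappa$. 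Because $b$ is continuous and bounded below near $x_0$, one gets $c\sim\kappa/(\pi r^2\,b(x_0))$ up to lower-order corrections, and $c\le\Lambda$ for $\Lambda$ large, so that $\zeta_*\in\mathcal{A}_{\varepsilon,\Lambda}$ for all small $\varepsilon$.

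Next I would estimate $\mathcal{E}(\zeta_*)=E(\zeta_*)-\mathcal{F}_\varepsilon(\zeta_*)$ term by term. For the kinetic energy, use the kernel representation
\[
E(\zeta_*)=\frac{1}{2}\int_D\!\!\int_D\bigl(b(x)G(x,y)+R(x,y)\bigr)\zeta_*(x)\zeta_*(y)\,d\nu(x)\,d\nu(y).
\]
The correction term $R$ is bounded, so it contributes $O(\kappa^2\|R\|_{L^\infty})=O(1)$. The singular part is governed by $G(x,y)=\tfrac{1}{2\pi}\ln\frac{1}{|x-y|}+(\text{bounded regular part})$; since $\zeta_*$ is supported in a disc of radius $r\varepsilon$ on which $b(x)\approx b(x_0)=\sup_D b$, a direct computation of the self-interaction of a uniform charge gives the leading term
\[
\frac{b(x_0)}{2}\cdot\frac{1}{2\pi}\,\kappa^2\ln\frac{1}{\varepsilon}+O(1)
=\frac{\kappa^2\sup_D b}{4\pi}\ln\frac{1}{\varepsilon}+O(1),
\]
the $O(1)$ absorbing the fixed-radius logarithmic constant $\ln\frac{1}{r}$, the oscillation of $b$ across $B_{r\varepsilon}(x_0)$, and the regular parts of $G$ and $R$. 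For the functional $\mathcal{F}_\varepsilon$, note that on the support $\varepsilon^2\zeta_*=c$ is bounded, so $F_*(\varepsilon^2\zeta_*)=F_*(c)$ is a fixed constant, and $\mathcal{F}_\varepsilon(\zeta_*)=\varepsilon^{-2}F_*(c)\,\nu(B_{r\varepsilon}(x_0))=O(1)$ because $\nu(B_{r\varepsilon}(x_0))=O(\varepsilon^2)$. Combining these gives $\mathcal{E}(\zeta_*)\ge\frac{\kappa^2\sup_D b}{4\pi}\ln\frac{1}{\varepsilon}-C$ with $C$ independent of $\varepsilon$ and $\Lambda$, and maximality closes the argument.

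The main obstacle is bookkeeping the logarithmic self-energy cleanly: one must verify that replacing $b(x)$ and $b(y)$ by the constant $\sup_D b$ across the shrinking support, and replacing $G$ by its logarithmic singularity, costs only $O(1)$ rather than something growing with $\varepsilon$. This requires the Hölder continuity of $b$ (so $|b(x)-b(x_0)|=O(\varepsilon^\alpha)$ on the support, whose product with the $\ln\frac{1}{\varepsilon}$ self-energy still tends to zero) together with the fact that the regular part of $G$ stays bounded on the compact set $\{x_0\}$ sitting away from $\partial D$. A subtle point worth stating explicitly is that the constant $C$ must not depend on $\Lambda$; this is automatic here because the test function $\zeta_*$ and all the estimates above are defined independently of $\Lambda$ (we only use $\Lambda$ large enough to guarantee $\zeta_*\in\mathcal{A}_{\varepsilon,\Lambda}$), so the $\Lambda$-uniformity is built into the choice of competitor rather than proved a posteriori.
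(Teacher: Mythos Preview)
Your proposal is correct and follows essentially the same strategy as the paper: build an explicit competitor concentrated on a disc of radius $O(\varepsilon)$ about a deepest interior point and compute its energy directly, using H\"older continuity of $b$ to replace $b(x)$ by $\sup_D b$ at cost $O(\varepsilon^\alpha\ln\frac{1}{\varepsilon})=o(1)$. The paper's only cosmetic difference is the choice of test function $\tilde\zeta=\frac{b_0}{\varepsilon^2 b}\chi_{B_{\varepsilon\sqrt{\kappa/\pi b_0}}(\bar x)}$ (with $b_0=\inf_{B_\delta(\bar x)}b$), which has the advantage that $\int\tilde\zeta\,d\nu=\kappa$ exactly and $\tilde\zeta\le\varepsilon^{-2}\le\Lambda\varepsilon^{-2}$ for \emph{every} $\Lambda>1$; in your version you should take $r$ large enough that $c<1$ rather than asking ``$\Lambda$ large,'' so that the bound genuinely holds for all admissible $\Lambda$ as the lemma requires.
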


\begin{proof}
The idea is to choose some suitable admissible functions. Fix $\bar{x}\in \mathcal{S}\cap D$ and $0<\delta<\text{dist}(\bar{x},\partial D)$.
Let $b_0=\inf_{B_\delta(\bar{x})}b$ and set
\begin{equation*}
  \tilde{\zeta}^{\varepsilon,\Lambda}=\frac{b_0}{\varepsilon^2b} \chi_{_{B_{\varepsilon \sqrt{\kappa/\pi b_0}}(\bar{x})}}.
\end{equation*}
It is clear that $\tilde{\zeta}^{\varepsilon,\Lambda} \in\mathcal{A}_{\varepsilon,\Lambda}$ for all sufficiently small $\varepsilon>0$. Since $\zeta^{\varepsilon,\Lambda}$ is a maximizer, we have $\mathcal{E}(\zeta^{\varepsilon,\Lambda})\ge \mathcal{E}(\tilde{\zeta}^{\varepsilon,\Lambda})$. A simple calculation yields to
	\begin{equation*}
	\begin{split}
	\mathcal{E}(\tilde{\zeta}^{\varepsilon,\Lambda})&=\frac{1}{2}\int_D\int_D K(x,y)\tilde{\zeta}^{\varepsilon,\Lambda}(x)\tilde{\zeta}^{\varepsilon,\Lambda}(y)d\nu(x)d\nu(y)-\frac{1}{\varepsilon^2}\int_D F_*\left(\varepsilon^2\tilde{\zeta}^{\varepsilon,\Lambda}(x)\right)d\nu(x) \\	&\ge\frac{1}{2}\int_D\int_D\frac{b(x)}{2\pi}\ln\frac{1}{|x-y|}\tilde{\zeta}^{\varepsilon,\Lambda}(x)\tilde{\zeta}^{\varepsilon,\Lambda}(y)d\nu(x)d\nu(y)-C\\
	&\ge \frac{\kappa^2}{4\pi}\left(b(\bar{x})-\left(\varepsilon \sqrt{\kappa/\pi b_0}\right)^{\alpha}\|b\|_{C^{\alpha}(\bar{D})}\right)\ln{\frac{1}{\varepsilon}}-C\\
	&\ge \frac{\kappa^2b(\bar{x})}{4\pi}\ln{\frac{1}{\varepsilon}}-C,\\
	\end{split}
	\end{equation*}
	where the positive number $C$ neither depends on $\varepsilon$ nor $\Lambda$. Thus the proof is completed.
\end{proof}
We now turn to estimate the Lagrange multiplier $\mu^{\varepsilon,\Lambda}$.

\begin{lemma}\label{lem4}
There holds
\begin{equation*}
  \mu^{\varepsilon,\Lambda}\ge \frac{\kappa \sup_D b}{2\pi}\ln{\frac{1}{\varepsilon}}-|1-2\vartheta_1|f^{-1}(\Lambda)-C,
\end{equation*}
where $\vartheta_1$ is the positive number in (H2)$'$, and the constant $C>0$ does not depend on $\varepsilon$ and $\Lambda$.
\end{lemma}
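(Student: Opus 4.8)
The plan is to extract the lower bound on $\mu^{\varepsilon,\Lambda}$ from the already-established energy lower bound in Lemma~\ref{lem3}, by relating the value $\mathcal{E}(\zeta^{\varepsilon,\Lambda})$ to $\mu^{\varepsilon,\Lambda}$ through an integral identity. The natural starting point is the profile formula \eqref{2-3}--\eqref{2-4}: on the set where $0<\psi^{\varepsilon,\Lambda}<f^{-1}(\Lambda)$ we have $\psi^{\varepsilon,\Lambda}=f^{-1}(\varepsilon^2\zeta^{\varepsilon,\Lambda})$, and on the saturated set we have $\psi^{\varepsilon,\Lambda}\ge f^{-1}(\Lambda)$ with $\zeta^{\varepsilon,\Lambda}=\Lambda/\varepsilon^2$. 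First I would write $\mathcal{K}\zeta^{\varepsilon,\Lambda}=\psi^{\varepsilon,\Lambda}+\mu^{\varepsilon,\Lambda}$ inside $E(\zeta^{\varepsilon,\Lambda})=\tfrac12\int_D \zeta^{\varepsilon,\Lambda}\,\mathcal{K}\zeta^{\varepsilon,\Lambda}\,d\nu$, which splits the energy as
\[
E(\zeta^{\varepsilon,\Lambda})=\frac12\int_D \zeta^{\varepsilon,\Lambda}\psi^{\varepsilon,\Lambda}\,d\nu+\frac{\mu^{\varepsilon,\Lambda}}{2}\int_D\zeta^{\varepsilon,\Lambda}\,d\nu
=\frac12\int_D \zeta^{\varepsilon,\Lambda}\psi^{\varepsilon,\Lambda}\,d\nu+\frac{\kappa\,\mu^{\varepsilon,\Lambda}}{2},
\]
using the mass constraint $\int_D\zeta^{\varepsilon,\Lambda}\,d\nu=\kappa$. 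This already isolates $\mu^{\varepsilon,\Lambda}$ linearly, so solving for it and invoking Lemma~\ref{lem3} will produce the leading term $\tfrac{\kappa\sup_D b}{2\pi}\ln\tfrac1\varepsilon$ once the remaining terms are controlled.

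The next step is to bound the two error contributions: $\tfrac12\int_D\zeta^{\varepsilon,\Lambda}\psi^{\varepsilon,\Lambda}\,d\nu$ and the convex term $\mathcal{F}_\varepsilon(\zeta^{\varepsilon,\Lambda})=\tfrac1{\varepsilon^2}\int_D F_*(\varepsilon^2\zeta^{\varepsilon,\Lambda})\,d\nu$ appearing in $\mathcal{E}=E-\mathcal{F}_\varepsilon$. Here is where the hypothesis (H2)$'$ enters decisively. On the set where $0<\psi^{\varepsilon,\Lambda}<f^{-1}(\Lambda)$ we have $\varepsilon^2\zeta^{\varepsilon,\Lambda}=f(\psi^{\varepsilon,\Lambda})$, so $f^{-1}(\varepsilon^2\zeta^{\varepsilon,\Lambda})=\psi^{\varepsilon,\Lambda}$, and (H2)$'$ gives $F_*(\varepsilon^2\zeta^{\varepsilon,\Lambda})\ge\vartheta_1\,\varepsilon^2\zeta^{\varepsilon,\Lambda}\,\psi^{\varepsilon,\Lambda}$ pointwise. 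On the saturated set, $f^{-1}(\varepsilon^2\zeta^{\varepsilon,\Lambda})=f^{-1}(\Lambda)\le\psi^{\varepsilon,\Lambda}$, so the same inequality $F_*(\Lambda)\ge\vartheta_1\Lambda f^{-1}(\Lambda)$ combined with $f^{-1}(\Lambda)\le\psi^{\varepsilon,\Lambda}$ lets me compare $F_*(\varepsilon^2\zeta^{\varepsilon,\Lambda})$ against $\vartheta_1\varepsilon^2\zeta^{\varepsilon,\Lambda}\psi^{\varepsilon,\Lambda}$ up to a controlled defect. Integrating, I expect a relation of the shape $\mathcal{F}_\varepsilon(\zeta^{\varepsilon,\Lambda})\ge\vartheta_1\int_D\zeta^{\varepsilon,\Lambda}\psi^{\varepsilon,\Lambda}\,d\nu-(\text{defect})$, where the defect on the saturated set is of order $f^{-1}(\Lambda)$ times the mass there, hence $O(f^{-1}(\Lambda))$ at worst.

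Assembling these pieces yields, after substituting into $\mathcal{E}=E-\mathcal{F}_\varepsilon$,
\[
\mathcal{E}(\zeta^{\varepsilon,\Lambda})\le\Big(\tfrac12-\vartheta_1\Big)\int_D\zeta^{\varepsilon,\Lambda}\psi^{\varepsilon,\Lambda}\,d\nu+\frac{\kappa\,\mu^{\varepsilon,\Lambda}}{2}+C\,f^{-1}(\Lambda),
\]
and I would rearrange this to solve for $\mu^{\varepsilon,\Lambda}$, bounding $\mathcal{E}(\zeta^{\varepsilon,\Lambda})$ below by Lemma~\ref{lem3}. The coefficient $(\tfrac12-\vartheta_1)$ is exactly what produces the factor $|1-2\vartheta_1|$ in the claimed estimate, so the integral $\int_D\zeta^{\varepsilon,\Lambda}\psi^{\varepsilon,\Lambda}\,d\nu$ must be controlled by $f^{-1}(\Lambda)$ up to additive constants. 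The main obstacle, and the step requiring the most care, is precisely this control of $\int_D\zeta^{\varepsilon,\Lambda}\psi^{\varepsilon,\Lambda}\,d\nu$: since $\psi^{\varepsilon,\Lambda}\le f^{-1}(\Lambda)$ on the non-saturated set but may exceed it on the saturated set, I must show the saturated-set contribution is uniformly bounded (using that $\zeta^{\varepsilon,\Lambda}=\Lambda/\varepsilon^2$ there carries total mass at most $\kappa$ and that $\psi^{\varepsilon,\Lambda}$ stays comparable to $f^{-1}(\Lambda)$ up to the $R$-correction and $\mu^{\varepsilon,\Lambda}$ contributions), so that the whole integral is $\le\kappa f^{-1}(\Lambda)+C$. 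Keeping the constant $C$ independent of both $\varepsilon$ and $\Lambda$ throughout, and correctly tracking the sign of $\tfrac12-\vartheta_1$ to recover the absolute value $|1-2\vartheta_1|$, is the delicate bookkeeping that makes the argument work.
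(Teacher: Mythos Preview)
Your overall strategy---split $E(\zeta^{\varepsilon,\Lambda})=\tfrac12\int\zeta^{\varepsilon,\Lambda}\psi^{\varepsilon,\Lambda}\,d\nu+\tfrac{\kappa}{2}\mu^{\varepsilon,\Lambda}$, invoke (H2)$'$ on $\mathcal{F}_\varepsilon$, and combine with Lemma~\ref{lem3}---matches the paper's, and the algebra leading to the coefficient $|1-2\vartheta_1|$ is correct. The genuine gap is in the last paragraph: your plan for bounding the saturated-set contribution $\int_{\{\zeta^{\varepsilon,\Lambda}=\Lambda/\varepsilon^2\}}\zeta^{\varepsilon,\Lambda}\psi^{\varepsilon,\Lambda}\,d\nu$ does not work. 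You propose to argue that ``$\psi^{\varepsilon,\Lambda}$ stays comparable to $f^{-1}(\Lambda)$'' on that set, but at this stage no such pointwise upper bound on $\psi^{\varepsilon,\Lambda}$ is available---the bound $\psi^{\varepsilon,\Lambda}\le |1-2\vartheta_1|f^{-1}(\Lambda)+C\ln\Lambda+C$ is Lemma~\ref{lem8}, which is proved \emph{later} and itself relies on the present lemma (via Lemma~\ref{lem6}). So your argument would be circular. Knowing only that the saturated set carries $\nu$-mass at most $\kappa\varepsilon^2/\Lambda$ is not enough by itself, since $\psi^{\varepsilon,\Lambda}$ could a priori be large there.

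The paper closes this gap with an elliptic bootstrap that you are missing. Set $U^{\varepsilon,\Lambda}:=\big(\psi^{\varepsilon,\Lambda}-f^{-1}(\Lambda)-2\kappa\|R\|_{L^\infty}\big)_+$; the crude lower bound $\mu^{\varepsilon,\Lambda}\ge -f^{-1}(\Lambda)-\kappa\|R\|_{L^\infty}$ from Lemma~\ref{lem2} ensures $U^{\varepsilon,\Lambda}\in H(D)$. Since $\{U^{\varepsilon,\Lambda}>0\}\subset\{\zeta^{\varepsilon,\Lambda}=\Lambda/\varepsilon^2\}$ and $\mathcal{L}\psi^{\varepsilon,\Lambda}=\zeta^{\varepsilon,\Lambda}$, integration by parts gives
\[
\int_D\frac{|\nabla U^{\varepsilon,\Lambda}|^2}{b^2}\,d\nu=\int_D\zeta^{\varepsilon,\Lambda}U^{\varepsilon,\Lambda}\,d\nu.
\]
Then H\"older on the set $\{\zeta^{\varepsilon,\Lambda}=\Lambda/\varepsilon^2\}$, the Sobolev inequality $\|U\|_{L^2}\le C\|\nabla U\|_{L^1}$, and Cauchy--Schwarz (again on that set, whose $\nu$-measure is at most $\kappa\varepsilon^2/\Lambda$) combine into a self-referential estimate
\[
\int_D\zeta^{\varepsilon,\Lambda}U^{\varepsilon,\Lambda}\,d\nu\le C\kappa\Big(\int_D\zeta^{\varepsilon,\Lambda}U^{\varepsilon,\Lambda}\,d\nu\Big)^{1/2},
\]
yielding a bound independent of both $\varepsilon$ and $\Lambda$. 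This is the missing idea you need to replace your heuristic control of $\psi^{\varepsilon,\Lambda}$.
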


\begin{proof}
	Recalling \eqref{2-3} and the assumption (H2)$'$ on $f$, we have
	\begin{equation}\label{2-7}
	\begin{split}
	2\mathcal E(\zeta^{\varepsilon,\Lambda}) &=  \int_D\zeta^{\varepsilon,\Lambda}\mathcal{K}\zeta^{\varepsilon,\Lambda}d\nu-\frac{2}{\varepsilon^2}\int_DF_*(\varepsilon^2\zeta^{\varepsilon,\Lambda})d\nu \\
	& \le\int_D \zeta^{\varepsilon,\Lambda} \psi^{\varepsilon,\Lambda}d\nu-2\vartheta_1\int_D \zeta^{\varepsilon,\Lambda} f^{-1}(\varepsilon^2\zeta^{\varepsilon,\Lambda})d\nu+\kappa\mu^{\varepsilon,\Lambda} \\
	&=\int_{\{0<\zeta<{\Lambda}{\varepsilon^{-2}}\}}\zeta^{\varepsilon,\Lambda} f^{-1}(\varepsilon^2\zeta^{\varepsilon,\Lambda})d\nu+\int_{\{\zeta={\Lambda}{\varepsilon^{-2}}\}}\zeta^{\varepsilon,\Lambda} \psi^{\varepsilon,\Lambda}d\nu\\
	&\ \ \  -2\vartheta_1\int_D \zeta^{\varepsilon,\Lambda} f^{-1}(\varepsilon^2\zeta^{\varepsilon,\Lambda})d\nu+\kappa\mu^{\varepsilon,\Lambda} \\
	&=\int_{D}\zeta^{\varepsilon,\Lambda} f^{-1}(\varepsilon^2\zeta^{\varepsilon,\Lambda})d\nu-\int_{\{\zeta={\Lambda}{\varepsilon^{-2}}\}}\zeta^{\varepsilon,\Lambda}f^{-1}(\Lambda)d\nu\\
	&\ \ \ +\int_{\{\zeta=\Lambda \varepsilon^{-2}\}}\zeta^{\varepsilon,\Lambda} \psi^{\varepsilon,\Lambda}d\nu-2\vartheta_1\int_D \zeta^{\varepsilon,\Lambda} f^{-1}(\varepsilon^2\zeta^{\varepsilon,\Lambda})d\nu+\kappa\mu^{\varepsilon,\Lambda}\\
	& \le |1-2\vartheta_1|\kappa  f^{-1}(\Lambda)+\int_D\zeta^{\varepsilon,\Lambda}\left(\psi^{\varepsilon,\Lambda}-f^{-1}(\Lambda)\right)_+d\nu +\kappa\mu^{\varepsilon,\Lambda}\\
&\le |1-2\vartheta_1|\kappa  f^{-1}(\Lambda)+\int_D\zeta^{\varepsilon,\Lambda}\left(\psi^{\varepsilon,\Lambda}-f^{-1}(\Lambda)-2\kappa \|R\|_{L^{\infty}(D\times D)}\right)_+d\nu\\
    &\ \ \ +2\kappa^2 \|R\|_{L^{\infty}(D\times D)} +\kappa\mu^{\varepsilon,\Lambda}.
	\end{split}
	\end{equation}
	Set $U^{\varepsilon,\Lambda}:=\left(\psi^{\varepsilon,\Lambda}-f^{-1}(\Lambda)-2\kappa \|R\|_{L^{\infty}(D\times D)}\right)_+$. Since $\mu^{\varepsilon,\Lambda}\geq-f^{-1}(\Lambda)-\kappa \|R\|_{L^{\infty}(D\times D)}$, we have $U^{\varepsilon,\Lambda}\in H(D)$.
	So by integration by parts we have
	\begin{equation}\label{eeeee}
	\int_D\frac{|\nabla U^{\varepsilon,\Lambda}|^2}{b^2}d\nu= \int_D \zeta^{\varepsilon,\Lambda}U^{\varepsilon,\Lambda}d\nu.
	\end{equation}
	Then by H\"older's inequality and Sobolev's inequality
	\begin{equation}\label{fffff}
	\begin{split}
	\int_D \zeta^{\varepsilon,\Lambda}U^{\varepsilon,\Lambda}d\nu
	& \le \frac{\Lambda}{\varepsilon^2}|\{x\in D\mid\zeta^{\varepsilon,\Lambda}={\Lambda}{\varepsilon^{-2}}\}|^{\frac{1}{2}}\left(\int_D |U^{\varepsilon,\Lambda}|^2d\nu\right)^{\frac{1}{2}}\\
	& \le \frac{C\Lambda }{\varepsilon^2}|\{x\in D\mid\zeta^{\varepsilon,\Lambda}={\Lambda}{\varepsilon^{-2}}\}|^{\frac{1}{2}}\left(\int_D |\nabla U^{\varepsilon,\Lambda}|d\textit{m} \right)\\
	& \le \frac{C\Lambda}{\varepsilon^2}|\{x\in D\mid\zeta^{\varepsilon,\Lambda}={\Lambda}{\varepsilon^{-2}}\}|\left(\int_D {\frac{|\nabla U^{\varepsilon,\Lambda}|^2}{b^2}d\nu}\right)^{\frac{1}{2}}.\\
	& \le C\kappa\left(\int_D {\frac{|\nabla U^{\varepsilon,\Lambda}|^2}{b^2}d\nu}\right)^{\frac{1}{2}}.\\
	\end{split}
	\end{equation}
	Here the constant $C>0$  does not depend on $\varepsilon$ and $\Lambda$. From \eqref{eeeee} and \eqref{fffff} we conclude that $\int_D \zeta^{\varepsilon,\Lambda}U^{\varepsilon,\Lambda}d\nu$ is uniformly bounded with respect to $\varepsilon$ and $\Lambda$, which together with \eqref{2-7} and Lemma \ref{lem3} leads to the desired result.
\end{proof}

To prove that vortices $\{\zeta^{\varepsilon,\Lambda}\}$ will be concentrated when $\varepsilon$ tends to zero, we need the following technical lemma.
\begin{lemma}[\cite{Cao2}]\label{lem5}
	Let $\Omega\subset D$, $0<\epsilon<1$, $A\ge0$, and let non-negative $\Gamma: D \to \mathbb{R}$ satisfy $\|\Gamma\|_{L^1(D)}=1$ and $\|\Gamma\|_{L^p(D)}\le C_1 \epsilon^{-2(1-{1}/{p})}$ for some $1<p\le +\infty$ and $C_1>0$. Suppose for any $x\in \Omega$, there holds
	\begin{equation}\label{2-12}
	(1-A)\ln\frac{1}{\epsilon}\le \int_D \ln\frac{1}{|x-y|}\Gamma(y)d\textit{m}(y)+C_2,
	\end{equation}
	where $C_2$ is a positive constant.
	Then there exists some constant $L>1$ (which may depend on $C_1, C_2$ but not on $A, \epsilon$) such that
	\begin{equation*}
	\text{diam}(\Omega)\le L\epsilon^{1-2A}.
	\end{equation*}
\end{lemma}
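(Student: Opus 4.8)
The plan is to reduce the diameter bound to a single sharp estimate on the logarithmic potential of $\Gamma$, and then to test the hypothesis at two points of $\Omega$ simultaneously. The key ingredient, which I would isolate first, is the bound
\[
\int_D\ln\frac{1}{|x-y|}\,\Gamma(y)\,d\textit{m}(y)\le\ln\frac1\epsilon+C_3\qquad\text{for every }x\in\mathbb{R}^2,
\]
where $C_3$ depends only on $C_1,p$ and $\operatorname{diam}D$. The decisive feature is that the coefficient of $\ln\frac1\epsilon$ is exactly $1$: a naive H\"older split of the potential at scale $\epsilon$ only produces a coefficient strictly larger than $1$, and, as the final computation shows, any coefficient exceeding $1$ destroys the exponent $1-2A$. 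Establishing this sharp constant is the main obstacle.

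To prove the sharp bound I would argue by rearrangement rather than by H\"older. Writing $\Gamma$ through the layer-cake formula $\Gamma=\int_0^\infty\chi_{\{\Gamma>\lambda\}}\,d\lambda$ gives
\[
\int_D\ln\frac{1}{|x-y|}\,\Gamma\,d\textit{m}=\int_0^\infty\Big(\int_{\{\Gamma>\lambda\}}\ln\frac{1}{|x-y|}\,d\textit{m}\Big)\,d\lambda .
\]
Since $\ln\frac1{|x-y|}$ is radially decreasing about $x$, the inner integral is maximized, for fixed measure $a(\lambda):=|\{\Gamma>\lambda\}|$, by the ball $B_{s}(x)$ with $\pi s^2=a(\lambda)$ (Hardy--Littlewood), and an explicit computation turns it into a concave profile $g(a)=\tfrac a2\big(\ln\tfrac\pi a+1\big)$. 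One then maximizes $\int_0^\infty g(a(\lambda))\,d\lambda$ under the two constraints coming from the hypotheses, namely $\int_0^\infty a\,d\lambda=\int_D\Gamma\,d\textit{m}\le1$ and the $L^p$ bound (for $p=\infty$ this forces $a(\lambda)=0$ once $\lambda>C_1\epsilon^{-2}$; for finite $p$ it reads $\int_0^\infty p\lambda^{p-1}a\,d\lambda\le C_1^p\epsilon^{-2(p-1)}$). In the case $p=\infty$ the maximum is delivered by Jensen's inequality applied to the concave $g$ on $[0,C_1\epsilon^{-2}]$, giving exactly $\ln\frac1\epsilon+\tfrac12\ln(\pi C_1)+\tfrac12$; for finite $p$ the same leading order follows from the scaling $y=\epsilon\sigma$, under which the mass and $L^p$ constraints become scale invariant and only the factor $\ln\frac1\epsilon$ is extracted, the residual integral being a bounded constant. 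This step is where the real work lies.

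With the sharp potential bound available, fix $x_1,x_2\in\Omega$ and put $\rho=|x_1-x_2|$. Adding \eqref{2-12} at $x_1$ and $x_2$ and using that the larger of $|x_1-y|,|x_2-y|$ is always at least $\rho/2$, so that
\[
\ln\frac{1}{|x_1-y|}+\ln\frac{1}{|x_2-y|}\le\ln\frac{1}{d(y)}+\ln\frac{2}{\rho},\qquad d(y):=\min\{|x_1-y|,|x_2-y|\},
\]
one obtains, after integrating against $\Gamma$ and using $\int_D\Gamma\,d\textit{m}=1$,
\[
2(1-A)\ln\frac1\epsilon-2C_2\le\int_D\ln\frac{1}{d(y)}\,\Gamma\,d\textit{m}+\ln\frac2\rho .
\]
Splitting $D$ into the two Voronoi cells of $x_1,x_2$ and applying the sharp bound on each half (with masses $m_1,m_2$, $m_1+m_2=1$) shows $\int_D\ln\frac1{d(y)}\,\Gamma\,d\textit{m}\le\ln\frac1\epsilon+C_3$, the entropy estimate $\sum_i m_i\ln\frac1{m_i}\le\ln2$ absorbing the split into the constant. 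Hence $\ln\frac2\rho\ge(1-2A)\ln\frac1\epsilon-(2C_2+C_3)$, that is $\rho\le L\,\epsilon^{1-2A}$ with $L=2e^{2C_2+C_3}$ independent of $A$ and $\epsilon$. Taking the supremum over all pairs $x_1,x_2\in\Omega$ yields $\operatorname{diam}(\Omega)\le L\epsilon^{1-2A}$, as claimed.
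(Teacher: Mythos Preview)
The paper does not prove this lemma at all; it is stated with a citation to \cite{Cao2} and then used as a black box in Lemmas~\ref{lem6} and~\ref{lem15}. So there is no proof in the paper to compare against.

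Your argument is correct. The two ingredients you isolate are exactly the right ones. First, the sharp upper bound
\[
\int_D \ln\frac{1}{|x-y|}\,\Gamma(y)\,d\textit{m}(y)\le \ln\frac{1}{\epsilon}+C_3
\]
with leading coefficient exactly $1$ is indeed the whole point; as you note, any coefficient larger than $1$ would ruin the exponent $1-2A$. Your rearrangement/bathtub computation for $p=\infty$ is clean, and the scaling $\tilde\Gamma(z)=\epsilon^2\Gamma(x+\epsilon z)$ you mention for finite $p$ actually handles all $p$ uniformly and is the simplest route: after scaling, $\|\tilde\Gamma\|_{L^1}=1$, $\|\tilde\Gamma\|_{L^p}\le C_1$, and
\[
\int_D\ln\frac{1}{|x-y|}\Gamma\,d\textit{m}=\ln\frac{1}{\epsilon}+\int\ln\frac{1}{|z|}\tilde\Gamma(z)\,dz,
\]
the last integral being bounded by $\|\ln\tfrac{1}{|z|}\|_{L^{p'}(B_1)}\|\tilde\Gamma\|_{L^p}$ on $\{|z|<1\}$ and nonpositive on $\{|z|\ge 1\}$.

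Second, the two--point trick with the inequality
\[
\ln\frac{1}{|x_1-y|}+\ln\frac{1}{|x_2-y|}\le \ln\frac{1}{d(y)}+\ln\frac{2}{\rho}
\]
is correct (it follows from $\max\{|x_1-y|,|x_2-y|\}\ge\rho/2$), and the Voronoi split together with the mass--dependent version of the sharp bound gives exactly $\int_D\ln\tfrac{1}{d(y)}\Gamma\,d\textit{m}\le\ln\tfrac{1}{\epsilon}+C_3'$. For finite $p$ the scaling argument applied on each cell simply doubles the constant; the entropy term $\sum_i m_i\ln\tfrac{1}{m_i}\le\ln 2$ is only needed in the $p=\infty$ bathtub computation, so that remark is accurate but specific to that route. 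The final inequality $\rho\le L\epsilon^{1-2A}$ then follows as you wrote.
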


Using Lemma \ref{lem5}, we are able to show that the size of $\text{supp}(\zeta^{\varepsilon,\Lambda})$ is of order $\varepsilon$.
\begin{lemma}\label{lem6}
	There exists some $L_0>1$ independent of $\varepsilon$ such that
	\begin{equation*}
	\text{diam}\left(\text{supp}(\zeta^{\varepsilon,\Lambda})\right)\le L_0\varepsilon.
	\end{equation*}
\end{lemma}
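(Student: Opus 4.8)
The plan is to apply the technical Lemma \ref{lem5} with the right choices of $\Gamma$, $\Omega$, $\epsilon$, and $A$, so that the conclusion $\text{diam}(\Omega) \le L\epsilon^{1-2A}$ with $A = 0$ yields exactly the desired bound. The natural choice is to take $\Gamma = \kappa^{-1}\zeta^{\varepsilon,\Lambda}$ (normalized so that $\|\Gamma\|_{L^1(D)} = 1$, using $\int_D \zeta^{\varepsilon,\Lambda}\,d\nu = \kappa$, and keeping in mind that Lemma \ref{lem5} integrates against $dm$ rather than $d\nu$, so I must carry the factor $b$ carefully), set $\Omega = \text{supp}(\zeta^{\varepsilon,\Lambda})$, and set $\epsilon = \varepsilon$. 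To make Lemma \ref{lem5} applicable with $A = 0$, I must verify its two structural hypotheses and the key pointwise inequality \eqref{2-12}.

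First I would check the $L^p$ bound on $\Gamma$. From the profile \eqref{2-3} we have $0 \le \zeta^{\varepsilon,\Lambda} \le \Lambda\varepsilon^{-2}$, so $\zeta^{\varepsilon,\Lambda}$ is bounded by $C\varepsilon^{-2}$ pointwise; combined with the $L^1$ normalization this gives $\|\Gamma\|_{L^p(D)} \le C_1\varepsilon^{-2(1-1/p)}$ for any fixed $p$, which is precisely the required decay rate. The heart of the argument is establishing the lower bound \eqref{2-12} on the set $\Omega$ where $\zeta^{\varepsilon,\Lambda} > 0$. On this set, the Euler--Lagrange relation \eqref{2-6} gives $\psi^{\varepsilon,\Lambda} = \mathcal{K}\zeta^{\varepsilon,\Lambda} - \mu^{\varepsilon,\Lambda} \ge f^{-1}(\varepsilon^2\zeta^{\varepsilon,\Lambda}) \ge 0$ wherever $\zeta^{\varepsilon,\Lambda} > 0$, hence
\begin{equation*}
  \mathcal{K}\zeta^{\varepsilon,\Lambda}(x) \ge \mu^{\varepsilon,\Lambda} \quad \text{for } x \in \Omega.
\end{equation*}
Expanding $\mathcal{K}\zeta^{\varepsilon,\Lambda}(x) = b(x)\int_D G(x,y)\zeta^{\varepsilon,\Lambda}\,d\nu + \int_D R(x,y)\zeta^{\varepsilon,\Lambda}\,d\nu$, bounding the correction term by $\kappa\|R\|_{L^\infty}$, and using $G(x,y) = \frac{1}{2\pi}\ln\frac{1}{|x-y|} + (\text{bounded regular part})$ together with $b(x) \le \sup_D b$, I would convert this into an inequality of the form $\mu^{\varepsilon,\Lambda} \le \frac{\sup_D b}{2\pi}\int_D \ln\frac{1}{|x-y|}\zeta^{\varepsilon,\Lambda}(y)\,dm(y) + C$ on $\Omega$. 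Then invoking the sharp lower bound $\mu^{\varepsilon,\Lambda} \ge \frac{\kappa\sup_D b}{2\pi}\ln\frac{1}{\varepsilon} - C$ from Lemma \ref{lem4} (the $f^{-1}(\Lambda)$ term there is harmless since $\Lambda$ is fixed before $\varepsilon$ is sent to zero) produces exactly \eqref{2-12} with $A = 0$, after dividing through by the constant $\frac{\kappa\sup_D b}{2\pi}$ to match the normalization of $\Gamma$.

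The main obstacle I anticipate is bookkeeping rather than conceptual: one must track the depth factors $b(x)$ and the passage between $d\nu$ and $dm$ so that the constant in front of $\ln\frac{1}{\varepsilon}$ in the lower bound \eqref{2-12} agrees with the coefficient $\frac{\kappa\sup_D b}{2\pi}$ appearing in the estimate for $\mu^{\varepsilon,\Lambda}$ from Lemma \ref{lem4}, leaving the clean exponent $A = 0$. A subtle point is that the factor $b(x)$ multiplying the Green's function must be controlled from above by $\sup_D b$ \emph{uniformly} on $\Omega$, which is legitimate since we only need an inequality in one direction; any loss from replacing $b(x)$ by $\sup_D b$ only weakens the lower bound and is absorbed into $C_2$. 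Once \eqref{2-12} holds with $A = 0$, Lemma \ref{lem5} delivers $\text{diam}(\text{supp}(\zeta^{\varepsilon,\Lambda})) \le L_0\varepsilon^{1-0} = L_0\varepsilon$ with $L_0$ independent of $\varepsilon$, completing the proof.
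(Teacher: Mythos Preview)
Your proposal is correct and follows essentially the same route as the paper: use the Euler--Lagrange relation to get $\mathcal{K}\zeta^{\varepsilon,\Lambda}\ge\mu^{\varepsilon,\Lambda}$ on the support, bound $\mathcal{K}\zeta^{\varepsilon,\Lambda}$ from above by $\frac{\sup_D b}{2\pi}\int_D\ln\frac{1}{|x-y|}\zeta^{\varepsilon,\Lambda}(y)\,d\nu(y)+C$, insert the lower bound on $\mu^{\varepsilon,\Lambda}$ from Lemma~\ref{lem4}, and apply Lemma~\ref{lem5} with $A=0$. The one bookkeeping point you correctly flagged but left slightly imprecise is that the right normalization is $\Gamma=\kappa^{-1}b\,\zeta^{\varepsilon,\Lambda}$ (not $\kappa^{-1}\zeta^{\varepsilon,\Lambda}$), since $\int_D\zeta^{\varepsilon,\Lambda}\,d\nu=\int_D b\,\zeta^{\varepsilon,\Lambda}\,dm=\kappa$ is what gives $\|\Gamma\|_{L^1(D,dm)}=1$; this is exactly what the paper does.
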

\begin{proof}
	For each $ x\in \text{supp}(\zeta^{\varepsilon,\Lambda})$ there holds
	\begin{equation*}
 \frac{\kappa \sup_D b}{2\pi}\ln{\frac{1}{\varepsilon}}-|1-2\vartheta_1|f^{-1}(\Lambda)-C\le \mathcal{K}\zeta^{\varepsilon,\Lambda}(x)\le \frac{\sup_D b}{2\pi}\int_D \ln\frac{1}{|x-y|}\zeta^{\varepsilon,\Lambda}(y)d\nu(y)+C,
	\end{equation*}
which implies
\begin{equation*}
  \ln{\frac{1}{\varepsilon}}-C(\Lambda)\le \int_D \ln\frac{1}{|x-y|}\kappa^{-1}\zeta^{\varepsilon,\Lambda}(y)b(y)d\textit{m}(y).
\end{equation*}
Since $\int_D\kappa^{-1}\zeta^{\varepsilon,\Lambda}(y)b(y)d\textit{m}(y)=1$, by Lemma \ref{lem5}, we deduce that
\begin{equation*}
\text{diam}\left(\text{supp}(\zeta^{\varepsilon,\Lambda})\right)\le L_0\varepsilon,
\end{equation*}
where $L_0>1$ may depend on $\Lambda$, but not on $\varepsilon$. The proof is thus completed.
\end{proof}

 Let
\begin{equation*}
  H(x,y):=\frac{1}{2\pi}\ln \frac{\text{diam}(D)}{|x-y|}-G(x,y).
\end{equation*}
We have the following estimate for $H$, which is required in the further analysis (see \cite{De2}).
\begin{lemma}\label{hhh}
For all $x,y \in D$, we have
\begin{equation*}
\begin{split}
\frac{1}{2\pi}&\ln  \frac{diam(D)}{\max \{ |x-y|, dist(x,\partial D),dist(y,\partial D) \} }\\
&~~~~~~~~~~~~~~~~~~~~~~~~~~~~~~~~~~~~~~~~~\ge H(x,y)  \\
                    &~~~~~~~~~~~~~~~~~~~~~~~~~~~~~~~~~~~~~~~~~ \ge \frac{1}{2\pi}\ln\frac{diam(D)}{|x-y|+2\max \{ dist(x,\partial D),dist(y,\partial D) \} }.
\end{split}
\end{equation*}
\end{lemma}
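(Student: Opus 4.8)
The plan is to recognize $H$ as a constant plus the regular part of the Green's function and then run the maximum principle in each variable. Writing $G(x,y)=\frac{1}{2\pi}\ln\frac{1}{|x-y|}-g(x,y)$, the regular part $g(\cdot,y)$ is harmonic in $D$ and coincides with $\frac{1}{2\pi}\ln\frac{1}{|\cdot-y|}$ on $\partial D$, so that $H(x,y)=\frac{1}{2\pi}\ln\mathrm{diam}(D)+g(x,y)$. Consequently $H(\cdot,y)$ is harmonic in $D$, continuous up to $\partial D$, with boundary values $\frac{1}{2\pi}\ln\frac{\mathrm{diam}(D)}{|z-y|}\ge 0$ for $z\in\partial D$; by the symmetry $G(x,y)=G(y,x)$ the same holds in the variable $y$. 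Writing $d_x:=\mathrm{dist}(x,\partial D)$ and $d_y:=\mathrm{dist}(y,\partial D)$, the whole lemma reduces to two-sided control of this harmonic extension.

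For the upper bound I would combine the maximum principle with the positivity of $G$. Since $|z-y|\ge d_y$ for every $z\in\partial D$, the maximum principle applied to $H(\cdot,y)$ gives $H(x,y)\le\frac{1}{2\pi}\ln\frac{\mathrm{diam}(D)}{d_y}$; running the same argument in the variable $y$ yields $H(x,y)\le\frac{1}{2\pi}\ln\frac{\mathrm{diam}(D)}{d_x}$. Finally $G\ge 0$ gives directly $H(x,y)\le\frac{1}{2\pi}\ln\frac{\mathrm{diam}(D)}{|x-y|}$. Taking the smallest of the three right-hand sides produces exactly $\frac{1}{2\pi}\ln\frac{\mathrm{diam}(D)}{\max\{|x-y|,d_x,d_y\}}$. (As a cross-check, the same upper bound follows from domain monotonicity $G_D\ge G_{B_{d}(y)}$ applied to the inscribed ball $B_{d}(y)\subset D$ with $d=\max\{d_x,d_y\}$, which gives the lower Green bound $G_D(x,y)\ge\frac{1}{2\pi}\ln\frac{d}{|x-y|}$.)

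The lower bound is the genuine difficulty, and I would obtain it by comparison with an explicitly solvable model. Assume $d:=\max\{d_x,d_y\}=d_y$ and let $p\in\partial D$ realize $|y-p|=d_y$. Reflecting $y$ to $y^\ast=2p-y$ across the line through $p$ orthogonal to $y-p$ gives the half-plane Green's function $G_{\mathbb H}(x,y)=\frac{1}{2\pi}\ln\frac{|x-y^\ast|}{|x-y|}$, whose regular part is $\frac{1}{2\pi}\ln\frac{1}{|x-y^\ast|}$; since $|x-y^\ast|\le|x-y|+|y-y^\ast|=|x-y|+2d$, this regular part is bounded below by $\frac{1}{2\pi}\ln\frac{1}{|x-y|+2d}$, which is precisely the bound sought for $g$, hence for $H$. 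The remaining step is to justify $G_D\le G_{\mathbb H}$, equivalently $g_D\ge g_{\mathbb H}$, which I would do by the maximum principle applied to $G_D(\cdot,y)-G_{\mathbb H}(\cdot,y)$: the logarithmic singularities at $y$ cancel, the difference is harmonic in $D$, and on $\partial D$ it equals $-G_{\mathbb H}\le 0$ provided $G_{\mathbb H}\ge 0$ there. The \textbf{main obstacle} is exactly this geometric requirement that $\partial D$ lie on $y$'s side of the bisecting line: for a non-convex domain part of $\partial D$ may cross into the region where $G_{\mathbb H}<0$, so the naive comparison can fail. I expect to handle this as in \cite{De2}, absorbing the boundary discrepancy into the bounded continuous correction $R$ built into the definition of a continuous lake; and I would exploit the fact that the stated estimate carries only $2\max\{d_x,d_y\}$ rather than the sharper $2\min\{d_x,d_y\}$ one would get from reflecting the shallower point, which leaves exactly the slack needed to accommodate this error term.
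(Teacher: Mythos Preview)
The paper does not prove this lemma at all: it is quoted from Dekeyser \cite{De2} with no argument given, so there is no ``paper's proof'' to match. Your upper-bound argument is correct and is exactly the standard one: the three separate bounds via the maximum principle for $g(\cdot,y)$, for $g(x,\cdot)$, and via $G\ge 0$, combined by taking the minimum, give precisely the stated upper estimate.

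There is, however, a genuine gap in your lower bound. The half-plane comparison $G_D\le G_{\mathbb H}$ is the right mechanism, and you correctly isolate the obstruction: for a merely Lipschitz (hence possibly non-convex) simply connected domain there is no reason that $\partial D$ stays on the correct side of the tangent line at the nearest boundary point, so the sign condition $G_{\mathbb H}\ge 0$ on $\partial D$ can fail. Your proposed remedy is wrong in two ways. First, the correction $R$ has nothing to do with this estimate: $R$ is the bounded kernel appearing in the representation of $\mathcal K$ (the inverse of the weighted operator $\mathcal L$), whereas the lemma is a statement purely about the Dirichlet Green's function $G$ of $-\Delta$; $H$ is defined without any reference to $b$ or $R$, and no ``continuous lake'' structure can be invoked to absorb an error here. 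Second, the observation that the statement carries $2\max\{d_x,d_y\}$ rather than $2\min\{d_x,d_y\}$ does not by itself supply the missing inequality; reflecting at either point still requires $D$ to lie in the corresponding half-plane, and the extra slack does not repair a failure of that inclusion. To close the argument for general simply connected Lipschitz $D$ you need a different device (for instance conformal-mapping/Koebe distortion estimates, or the argument actually given in \cite{De2}), not the kernel $R$.
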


\begin{lemma}\label{lem7}
  There exists some constant $\eta>0$ not depending on $\varepsilon$ such that for every $\Lambda>\max\{1,\kappa \varepsilon^2/|D|\}$ and all sufficiently small $\varepsilon>0$, we have $\text{dist}\left(supp(\zeta^{\varepsilon,\Lambda}),\partial D\right)>\eta$.
\end{lemma}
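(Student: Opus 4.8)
The plan is to combine the pointwise characterization of $\text{supp}(\zeta^{\varepsilon,\Lambda})$ with the decay of the Green's function near $\partial D$. Fix $x\in\text{supp}(\zeta^{\varepsilon,\Lambda})$. By \eqref{2-3} and continuity of $\psi^{\varepsilon,\Lambda}$ we have $\psi^{\varepsilon,\Lambda}(x)\ge0$, hence $\mathcal{K}\zeta^{\varepsilon,\Lambda}(x)\ge\mu^{\varepsilon,\Lambda}$, and Lemma \ref{lem4} supplies the lower bound $\mu^{\varepsilon,\Lambda}\ge\frac{\kappa\sup_D b}{2\pi}\ln\frac{1}{\varepsilon}-|1-2\vartheta_1|f^{-1}(\Lambda)-C$. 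The whole point is to produce a matching \emph{upper} bound on $\mathcal{K}\zeta^{\varepsilon,\Lambda}(x)$ whose coefficient of $\ln\frac{1}{\varepsilon}$ is again $\frac{\kappa\sup_D b}{2\pi}$, but carrying a correction term that degenerates to $-\infty$ as $\text{dist}(x,\partial D)\to0$. Comparing the two bounds and cancelling the $\ln\frac{1}{\varepsilon}$ terms will then force $\text{dist}(x,\partial D)$ to stay bounded below.

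To get the upper bound I would write $\mathcal{K}\zeta^{\varepsilon,\Lambda}(x)\le\sup_D b\int_D G(x,y)\zeta^{\varepsilon,\Lambda}(y)\,d\nu(y)+\kappa\|R\|_{L^\infty(D\times D)}$ and split $G=\frac{1}{2\pi}\ln\frac{\text{diam}(D)}{|x-y|}-H$. The lower bound for $H$ in Lemma \ref{hhh} yields $\int_D G\,\zeta^{\varepsilon,\Lambda}\,d\nu\le\frac{1}{2\pi}\int_D\ln\big(1+\tfrac{2\max\{d(x),d(y)\}}{|x-y|}\big)\zeta^{\varepsilon,\Lambda}\,d\nu$, where $d(\cdot)=\text{dist}(\cdot,\partial D)$. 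Since Lemma \ref{lem6} gives $\text{diam}(\text{supp}(\zeta^{\varepsilon,\Lambda}))\le L_0\varepsilon$, for $x,y$ in the support I may replace $\max\{d(x),d(y)\}$ by $M:=d(x)+L_0\varepsilon$. The integrand is radially decreasing in $|x-y|$ and $\zeta^{\varepsilon,\Lambda}b\le\Lambda\varepsilon^{-2}\sup_D b$ has total $\nu$-mass $\kappa$, so the bathtub principle bounds the integral by its value on the concentrated configuration $\Lambda\varepsilon^{-2}\sup_D b\,\chi_{B_\rho(x)}$ with $\pi\rho^2\Lambda\varepsilon^{-2}\sup_D b=\kappa$, that is $\rho=\varepsilon\sqrt{\kappa/(\pi\Lambda\sup_D b)}$.

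A direct computation of $\int_{B_\rho(x)}\ln(1+\tfrac{2M}{|x-y|})\,dm$, using the normalization $\Lambda\varepsilon^{-2}\sup_D b\cdot\pi\rho^2=\kappa$, then gives $\int_D G\,\zeta^{\varepsilon,\Lambda}\,d\nu\le\frac{\kappa}{2\pi}\ln\frac{1}{\varepsilon}+\frac{\kappa}{2\pi}\ln\big(2d(x)+C\varepsilon\big)+C$, so that $\mathcal{K}\zeta^{\varepsilon,\Lambda}(x)\le\frac{\kappa\sup_D b}{2\pi}\ln\frac{1}{\varepsilon}+\frac{\kappa\sup_D b}{2\pi}\ln\big(2d(x)+C\varepsilon\big)+C'$. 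Feeding this into $\mathcal{K}\zeta^{\varepsilon,\Lambda}(x)\ge\mu^{\varepsilon,\Lambda}$ and cancelling the $\frac{\kappa\sup_D b}{2\pi}\ln\frac{1}{\varepsilon}$ terms leaves $\ln(2d(x)+C\varepsilon)\ge-\frac{2\pi}{\kappa\sup_D b}\big(|1-2\vartheta_1|f^{-1}(\Lambda)+C''\big)$, whence $2d(x)+C\varepsilon$ is bounded below by a positive constant $2\eta_0$ that is independent of $\varepsilon$. For $\varepsilon$ small this yields $d(x)\ge\eta_0/2=:\eta$, uniformly over $x\in\text{supp}(\zeta^{\varepsilon,\Lambda})$, which is the assertion.

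The main obstacle is the exact matching of the leading terms: the coefficient $\frac{\kappa\sup_D b}{2\pi}$ in front of $\ln\frac{1}{\varepsilon}$ must come out identical in the upper and lower bounds, for only then does the singular part cancel and leave a clean geometric inequality for $d(x)$. This hinges on performing the bathtub estimate with the \emph{sharp} radius $\rho$ and on the near-boundary decay of $G$ (through Lemma \ref{hhh}) being captured to leading order; any loss here would leave an uncontrolled multiple of $\ln\frac{1}{\varepsilon}$ and wreck the conclusion. I note that the resulting $\eta$ depends on the constants of Lemmas \ref{lem4} and \ref{hhh} (in particular on $\Lambda$ through $f^{-1}(\Lambda)$ and through $\rho$), but is genuinely independent of $\varepsilon$; this is exactly what is needed, since $\Lambda$ will be fixed once and for all in the sequel.
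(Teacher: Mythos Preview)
Your argument is correct, but it follows a genuinely different route from the paper's. The paper argues by contradiction at the level of the \emph{energy}: assuming a sequence of supports collapses onto some $x_0\in\partial D$, it bounds $\mathcal{E}(\zeta^{\varepsilon_j,\Lambda})$ from above by $\frac{\kappa^2\sup_{D_j}b}{4\pi}\ln\frac{1}{\varepsilon_j}$ minus a multiple of $\int\!\!\int H\,\zeta\zeta\,d\nu\,d\nu$, then compares with the lower bound of Lemma~\ref{lem3}. The leading terms force $x_0\in\mathcal{S}\cap\partial D$ and leave the $H$-integral bounded, which contradicts Lemma~\ref{hhh} since $H(x,y)\to+\infty$ as the support approaches $\partial D$. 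Your proof instead works \emph{pointwise} on the support: you pair the lower bound $\mathcal{K}\zeta^{\varepsilon,\Lambda}(x)\ge\mu^{\varepsilon,\Lambda}$ (via Lemma~\ref{lem4}) with an upper bound obtained directly from the Green's function decay in Lemma~\ref{hhh}, sharpened by a bathtub/rearrangement estimate so that the coefficient of $\ln\frac{1}{\varepsilon}$ matches exactly; after cancellation you read off an explicit positive lower bound for $d(x)+O(\varepsilon)$.

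Both arguments rest on Lemma~\ref{hhh} and on the diameter control of Lemma~\ref{lem6}, and both yield an $\eta$ that depends on $\Lambda$ (through $L_0$ and $f^{-1}(\Lambda)$) but not on $\varepsilon$; you correctly flag this, and it is harmless since $\Lambda$ is fixed afterwards. What your approach buys is directness and an explicit constant: no subsequence extraction, no two-step contradiction (first locating $x_0$ in $\mathcal{S}$, then blowing up $H$). What the paper's approach buys is uniformity of method---everything is phrased through the energy $\mathcal{E}$, in keeping with the rest of the section---and it avoids the somewhat delicate bathtub computation needed to match the $\ln\frac{1}{\varepsilon}$ coefficients exactly. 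One small remark: your bathtub step compares against a competitor supported in $B_\rho(x)$, which a priori might protrude from $D$; this is harmless because extending by zero and applying the rearrangement inequality on $\mathbb{R}^2$ only increases the bound, but it is worth saying explicitly.
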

\begin{proof}
  We argue by contradiction. Suppose that there exists a sequence $\{\varepsilon_j\}_{j=1}^{\infty}$ such that $\varepsilon_j\to 0^+$ and $\text{supp}(\zeta^{\varepsilon_j,\Lambda})\subset D_j:=\bar{D}\cap B_{{1}/{j}}(x_0)$ for some $x_0\in \partial D$  as $j\to +\infty$. Using Lemma \ref{lem6}, it is not hard to obtain
  \begin{equation*}
      \mathcal{E}(\zeta^{\varepsilon_j,\Lambda}) \le \frac{\sup_{D_j} b}{4\pi}\ln\frac{1}{\varepsilon_j}-\inf_{D_j} b\int_D\int_D H(x,y)\zeta^{\varepsilon_j, \Lambda}(x)\zeta^{\varepsilon_j, \Lambda}(y)d\nu(x)d\nu(y)+C(\Lambda),
  \end{equation*}
  where $C(\Lambda)>0$ depends on $\Lambda$, but not on $\varepsilon_j$. Combining this and Lemma \ref{lem3}, we derive that $x_0$ must belong to $\mathcal{S}\cap \partial D$. Moreover, for all sufficiently large $j$, one has
  \begin{equation}\label{ad1}
    \int_D\int_D H(x,y)\zeta^{\varepsilon_j, \Lambda}(x)\zeta^{\varepsilon_j, \Lambda}(y)d\nu(x)d\nu(y)\le C(\Lambda).
  \end{equation}
  But, by Lemma \ref{hhh}, we have
  \begin{equation*}
    \int_D\int_D H(x,y)\zeta^{\varepsilon_j, \Lambda}(x)\zeta^{\varepsilon_j, \Lambda}(y)d\nu(x)d\nu(y) \to +\infty, \ \ \text{as}\ j \to +\infty.
  \end{equation*}
  which together with \eqref{ad1} clearly leads to a contradiction.
\end{proof}

The following lemma shows that  $\psi^{\varepsilon,\Lambda}$ has a prior upper bound with respect to $\Lambda$.

\begin{lemma}\label{lem8}
One has
\begin{equation*}
  \psi^{\varepsilon,\Lambda} \le |1-2\vartheta_1|  f^{-1}(\Lambda)+\frac{\kappa  \sup_D b}{4\pi} \ln \Lambda +C+o_\varepsilon(1),
\end{equation*}
where the constant $C>0$ does not depend on $\varepsilon$ and $\Lambda$, and $o_\varepsilon(1)\to 0$ as $\varepsilon \to 0^+$.
\end{lemma}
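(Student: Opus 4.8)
The plan is to bound $\psi^{\varepsilon,\Lambda}$ pointwise from above by controlling the two pieces of $\mathcal{K}\zeta^{\varepsilon,\Lambda}=b(x)\int_D G(x,y)\zeta^{\varepsilon,\Lambda}(y)\,d\nu(y)+\int_D R(x,y)\zeta^{\varepsilon,\Lambda}(y)\,d\nu(y)$ and then subtracting $\mu^{\varepsilon,\Lambda}$, using the lower bound on $\mu^{\varepsilon,\Lambda}$ from Lemma~\ref{lem4}. The correction term is harmless: since $R$ is bounded and $\int_D\zeta^{\varepsilon,\Lambda}\,d\nu=\kappa$, one has $\bigl|\int_D R(x,y)\zeta^{\varepsilon,\Lambda}(y)\,d\nu(y)\bigr|\le \kappa\|R\|_{L^\infty(D\times D)}=O(1)$. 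So the whole matter reduces to estimating the singular Green's part $b(x)\int_D G(x,y)\zeta^{\varepsilon,\Lambda}(y)\,d\nu(y)$ from above, and the key gain must come from the smallness of the support established in Lemma~\ref{lem6}.

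First I would write $G(x,y)=\tfrac{1}{2\pi}\ln\tfrac{1}{|x-y|}+(\text{smooth/bounded part})$, so that up to an $O(1)$ term the Green's integral is $\tfrac{b(x)}{2\pi}\int_D \ln\tfrac{1}{|x-y|}\zeta^{\varepsilon,\Lambda}(y)\,d\nu(y)$. I would split this integral over $\{|x-y|\ge 1\}$, which contributes $O(1)$, and over the near region. By Lemma~\ref{lem6} the support of $\zeta^{\varepsilon,\Lambda}$ has diameter at most $L_0\varepsilon$, and $0\le\zeta^{\varepsilon,\Lambda}\le\Lambda\varepsilon^{-2}$ with total mass $\kappa$ in $\nu$-measure. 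The standard way to bound $\int \ln\tfrac{1}{|x-y|}\zeta^{\varepsilon,\Lambda}(y)\,d\nu(y)$ from above under an $L^\infty$ and mass constraint is a bathtub/rearrangement argument: the logarithmic potential is maximised when the mass $\kappa$ is packed at density $\Lambda\sup_D b\,\varepsilon^{-2}$ into the ball around $x$ of the corresponding radius $r_\varepsilon$, where $\pi r_\varepsilon^2\cdot\Lambda\sup_D b\,\varepsilon^{-2}\approx\kappa$, i.e.\ $r_\varepsilon\approx\varepsilon\sqrt{\kappa/(\pi\Lambda\sup_D b)}$. Computing $\int_{B_{r_\varepsilon}}\ln\tfrac{1}{|x-y|}$ against this packed profile yields the leading term $\tfrac{\kappa}{2\pi}\ln\tfrac{1}{r_\varepsilon}=\tfrac{\kappa}{2\pi}\bigl(\ln\tfrac{1}{\varepsilon}+\tfrac12\ln\Lambda\bigr)+O(1)$, so that $b(x)\cdot(\text{Green part})\le \tfrac{\kappa\sup_D b}{2\pi}\ln\tfrac{1}{\varepsilon}+\tfrac{\kappa\sup_D b}{4\pi}\ln\Lambda+O(1)$.

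To finish I would subtract $\mu^{\varepsilon,\Lambda}$ and invoke Lemma~\ref{lem4}, which gives $-\mu^{\varepsilon,\Lambda}\le -\tfrac{\kappa\sup_D b}{2\pi}\ln\tfrac{1}{\varepsilon}+|1-2\vartheta_1|f^{-1}(\Lambda)+C$. The two $\tfrac{\kappa\sup_D b}{2\pi}\ln\tfrac{1}{\varepsilon}$ terms cancel exactly, leaving
\[
\psi^{\varepsilon,\Lambda}\le \frac{\kappa\sup_D b}{4\pi}\ln\Lambda+|1-2\vartheta_1|f^{-1}(\Lambda)+C+o_\varepsilon(1),
\]
which is the claimed bound. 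The $o_\varepsilon(1)$ absorbs the error from replacing $b(x)$ near the support by $\sup_D b$ (using the Hölder continuity of $b$ and $\operatorname{diam}(\operatorname{supp}\zeta^{\varepsilon,\Lambda})\le L_0\varepsilon$) and from the $r_\varepsilon$-versus-$\varepsilon$ comparison.

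I expect the main obstacle to be making the upper bound on the logarithmic potential rigorous and $\Lambda$-explicit: one must verify that the bathtub-type maximiser really does dominate $\int\ln\tfrac{1}{|x-y|}\zeta^{\varepsilon,\Lambda}(y)\,d\nu(y)$ uniformly in $x$ (including points $x$ outside the support, where the bound is easier) and keep careful track of how $\Lambda$ enters through the density ceiling $\Lambda\varepsilon^{-2}$, so that precisely the coefficient $\tfrac{\kappa\sup_D b}{4\pi}$ appears in front of $\ln\Lambda$. A secondary subtlety is ensuring all the $O(1)$ constants are genuinely independent of both $\varepsilon$ and $\Lambda$, which is what lets this estimate be combined later with an optimisation over $\Lambda$.
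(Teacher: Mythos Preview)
Your proposal is correct and follows essentially the same route as the paper: decompose $\mathcal{K}\zeta^{\varepsilon,\Lambda}$ into the singular logarithmic part plus bounded remainders, bound the logarithmic potential by a bathtub/rearrangement argument using the density ceiling $\Lambda\varepsilon^{-2}$ (which is exactly where the $\tfrac{\kappa\sup_D b}{4\pi}\ln\Lambda$ term comes from), and then cancel the $\ln\tfrac{1}{\varepsilon}$ contribution against the lower bound on $\mu^{\varepsilon,\Lambda}$ from Lemma~\ref{lem4}. Two small remarks: the paper only carries out the estimate for $x\in\operatorname{supp}(\zeta^{\varepsilon,\Lambda})$, which is all that is needed for Lemma~\ref{lem9} (and outside the support $\psi^{\varepsilon,\Lambda}\le 0$ by \eqref{2-6} anyway); and the $o_\varepsilon(1)$ arises not from replacing $b(x)$ by $\sup_D b$ (that is just $b(x)\le\sup_D b$) but from the H\"older oscillation of $b(y)$ over the support when passing between $d\nu$ and $dm$, multiplied by the $\ln\tfrac{1}{\varepsilon}$ factor---note that since $L_0$ from Lemma~\ref{lem6} may depend on $\Lambda$, this $o_\varepsilon(1)$ is allowed to depend on $\Lambda$, only $C$ must not.
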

\begin{proof}
	For any $x\in \text{supp}(\zeta^{\varepsilon,\Lambda})$, we have
	\begin{equation*}
	\begin{split}
	\psi^{\varepsilon,\Lambda}(x) &  \le \frac{1}{2\pi}\int_D b(x)\ln\frac{1}{|x-y|}\zeta{^{\varepsilon,\Lambda}}(y)d\nu(y)-\mu^{_{\varepsilon,\Lambda}}+C \\
	& \le \frac{\sup_D b}{2\pi}\left(b(x)+(L_0\varepsilon)^\alpha \|b\|_{C^\alpha(\bar{D})} \right)\int_D \ln \frac{1}{|x-y|}\zeta^{\varepsilon,\Lambda}(y) d\textit{m}(y)-\mu^{_{\varepsilon,\Lambda}}+C \\
    & \le \frac{\sup_D b}{2\pi}\left(b(x)+(L_0\varepsilon)^\alpha \|b\|_{C^\alpha(\bar{D})} \right)\left(\ln\frac{1}{\varepsilon}+\frac{\ln \Lambda}{2} \right)\int_D\zeta^{\varepsilon,\Lambda}(y) d\textit{m}(y)-\mu^{_{\varepsilon,\Lambda}}+C \\
    & \le  \frac{\sup_D b}{2\pi} \left(\ln\frac{1}{\varepsilon}+\frac{\ln \Lambda}{2} \right)\int_D\zeta^{\varepsilon,\Lambda} (y) d\nu(y)-\mu^{_{\varepsilon,\Lambda}}+C+o_\varepsilon(1)\\
    & \le \frac{\kappa \sup_D b}{2\pi} \left(\ln\frac{1}{\varepsilon}+\frac{\ln \Lambda}{2} \right)-\mu^{_{\varepsilon,\Lambda}}+C+o_\varepsilon(1).
	\end{split}
	\end{equation*}
	By Lemma \ref{lem4} we have
	\begin{equation*}
  \psi^{\varepsilon,\Lambda} \le |1-2\vartheta_1|  f^{-1}(\Lambda)+\frac{\kappa  \sup_D b}{4\pi} \ln \Lambda +C+o_\varepsilon(1).
	\end{equation*}
    The proof is completed.
\end{proof}

As a consequence of Lemma \ref{lem8}, we can eliminate the patch part in \eqref{2-3}.
\begin{lemma}\label{lem9}
	If $\Lambda$ is sufficiently large(not depending on $\varepsilon$), then for all sufficiently small $\varepsilon>0$ we have
	\begin{equation}\label{patchvanish}
	|\{x\in D\mid\zeta^{\varepsilon,\Lambda}(x)={\Lambda}{\varepsilon^{-2}}\}|=0.
	\end{equation}
	As a consequence, $\zeta^{\varepsilon,\Lambda}$ has the form
	\begin{equation*}
	\zeta^{\varepsilon,\Lambda}=\frac{1}{\varepsilon^2}f(\psi^{\varepsilon,\Lambda}).
	\end{equation*}
\end{lemma}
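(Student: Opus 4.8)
The plan is to show that the ``patch'' set $P^{\varepsilon,\Lambda}:=\{x\in D\mid \zeta^{\varepsilon,\Lambda}(x)=\Lambda\varepsilon^{-2}\}$ is $\nu$-null once $\Lambda$ is chosen large enough. The crucial observation is that, by the profile formula \eqref{2-3}, this set is precisely $\{x\in D\mid \psi^{\varepsilon,\Lambda}(x)\ge f^{-1}(\Lambda)\}$. So it suffices to prove that, for a suitable fixed large $\Lambda$ and all small $\varepsilon$, one has $\psi^{\varepsilon,\Lambda}(x)<f^{-1}(\Lambda)$ throughout $D$; then $P^{\varepsilon,\Lambda}=\varnothing$ and the characteristic function $\chi_{\{\psi^{\varepsilon,\Lambda}\ge f^{-1}(\Lambda)\}}$ drops out of \eqref{2-3}, leaving exactly $\zeta^{\varepsilon,\Lambda}=\varepsilon^{-2}f(\psi^{\varepsilon,\Lambda})$.

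The key input is the a priori bound from Lemma \ref{lem8}, which gives
\[
\psi^{\varepsilon,\Lambda}\le |1-2\vartheta_1|\,f^{-1}(\Lambda)+\frac{\kappa\sup_D b}{4\pi}\ln\Lambda+C+o_\varepsilon(1),
\]
with $C$ independent of $\varepsilon$ and $\Lambda$. The strategy is therefore to compare this upper bound against the target threshold $f^{-1}(\Lambda)$, i.e.\ to verify
\[
|1-2\vartheta_1|\,f^{-1}(\Lambda)+\frac{\kappa\sup_D b}{4\pi}\ln\Lambda+C+o_\varepsilon(1)<f^{-1}(\Lambda)
\]
for large $\Lambda$ and small $\varepsilon$. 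Since $\vartheta_1\in(0,1)$ by (H2)$'$, the coefficient $|1-2\vartheta_1|$ is strictly less than $1$, so rearranging leaves a positive multiple $(1-|1-2\vartheta_1|)\,f^{-1}(\Lambda)$ on one side that must dominate the logarithmic term $\tfrac{\kappa\sup_D b}{4\pi}\ln\Lambda$ plus the constant $C$. The decisive point is thus a growth comparison between $f^{-1}(\Lambda)$ and $\ln\Lambda$ as $\Lambda\to+\infty$.

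The main obstacle — and where I expect the real content to lie — is establishing that $f^{-1}(\Lambda)$ grows faster than $\ln\Lambda$. This is exactly what hypothesis (H3) provides: since $f(s)e^{-\tau s}\to 0$ for every $\tau>0$, setting $s=f^{-1}(\Lambda)$ (so $\Lambda=f(s)$) forces $\ln\Lambda=\ln f(s)\le \tau s+o(s)=\tau\,f^{-1}(\Lambda)+o(f^{-1}(\Lambda))$ for each fixed $\tau>0$. Choosing $\tau$ small relative to $(1-|1-2\vartheta_1|)\cdot\tfrac{4\pi}{\kappa\sup_D b}$ makes the logarithmic term negligible compared to $(1-|1-2\vartheta_1|)f^{-1}(\Lambda)$, and the additive constant $C$ is then absorbed by taking $\Lambda$ large (note $f^{-1}(\Lambda)\to+\infty$ since $f$ is strictly increasing with $\lim_{s\to+\infty}f(s)=+\infty$). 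Fixing such a $\Lambda$ first, the remaining $o_\varepsilon(1)$ term is then handled by taking $\varepsilon$ small, which yields the strict inequality and hence \eqref{patchvanish}.
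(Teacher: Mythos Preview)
Your proposal is correct and follows essentially the same approach as the paper: both arguments combine the upper bound from Lemma~\ref{lem8} with the observation that $\psi^{\varepsilon,\Lambda}\ge f^{-1}(\Lambda)$ on the patch set, rearrange to obtain $(1-|1-2\vartheta_1|)f^{-1}(\Lambda)\le \tfrac{\kappa\sup_D b}{4\pi}\ln\Lambda+C+o_\varepsilon(1)$, and then invoke (H3) (equivalently, $\tau f^{-1}(s)-\ln s\to+\infty$) to force a contradiction for large $\Lambda$ and small $\varepsilon$. One minor remark: the bound in Lemma~\ref{lem8} is established only on $\text{supp}(\zeta^{\varepsilon,\Lambda})$, not ``throughout $D$'' as you wrote, but since the patch set lies in the support this does not affect your argument.
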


\begin{proof}
	Notice that
	\begin{equation}\label{2-9}
	\psi^{\varepsilon,\Lambda}\ge f^{-1}(\Lambda)\ \ \text{on}\ \  \{x\in D\mid\zeta^{\varepsilon,\Lambda}(x)={\Lambda}{\varepsilon^{-2}}\}.
	\end{equation}
	Combining \eqref{2-9} and Lemma \ref{lem8}, we conclude that there exists some $C$ not depending on $\varepsilon$ and $\Lambda$ such that
	\begin{equation}\label{2-10}
	(1-|1-2\vartheta_1|) f^{-1}(\Lambda)\le \frac{\kappa\sup_D b}{4\pi}\ln \Lambda+C+o_\varepsilon(1)   \ \ \text{on}\ \  \{x\in D\mid\omega^{\varepsilon,\Lambda}(x)={\Lambda}{\varepsilon^{-2}}\}.
	\end{equation}
	Note that since $\vartheta_1\in(0,1),$ there holds $1-|1-2\vartheta_1|\in(0,1)$. Recall the assumption (H3) on $f$, that is,
	\begin{equation*}
	\lim_{s\to+\infty}f(s)e^{-\tau s}=0,\ \ \forall\, \tau>0,
	\end{equation*}
	which implies for each $\tau>0$
	\begin{equation}\label{2-11}
	\lim_{s\to+\infty}\left(\tau f^{-1}(s)-\ln s\right)=+\infty.
	\end{equation}
	Combining \eqref{2-10} and \eqref{2-11}, we deduce that if $\Lambda$ is chosen to be large enough, then for all sufficiently small $\varepsilon>0$, we have
	\[|\{x\in D\mid\zeta^{\varepsilon,\Lambda}(x)={\Lambda}{\varepsilon^{-2}}\}|=0.\]
	The proof is completed.
\end{proof}

In the rest of this section, we fix the parameter $\Lambda$ such that Lemma \ref{lem9} holds. To simplify notations, we shall abbreviate $(\mathcal{A}_{\varepsilon,\Lambda},\zeta^{\varepsilon,\Lambda},\psi^{\varepsilon,\Lambda}, \mu^{\varepsilon,\Lambda})$ as $(\mathcal{A}_{\varepsilon},\zeta^{\varepsilon}, \psi^{\varepsilon}, \mu^{\varepsilon})$.

Now we turn to study the location of support of $\zeta^\varepsilon$ when $\varepsilon\to 0^+$. We show that $\text{supp}(\zeta^\varepsilon)$ will shrink to $\mathcal{S}$ as $\varepsilon$ goes to zero.

\begin{lemma}\label{lem10}
For any $\delta>0$, we have $supp(\zeta^\varepsilon)\subset \mathcal{S}_\delta$ if $\varepsilon>0$ is sufficiently small.
\end{lemma}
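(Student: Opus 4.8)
The plan is to argue by contradiction, pitting the sharp energy lower bound of Lemma \ref{lem3} against an upper bound for $\mathcal{E}(\zeta^\varepsilon)$ that actually \emph{sees} the value of the depth $b$ on the support. Suppose the conclusion fails: there exist $\delta>0$ and a sequence $\varepsilon_j\to0^+$ with $\text{supp}(\zeta^{\varepsilon_j})\not\subset\mathcal{S}_\delta$. Since $b$ is continuous on the compact set $\bar D$ and $\mathcal{S}$ is precisely where $b$ attains its maximum, the compact set $\bar D\setminus\mathcal{S}_{\delta/2}$ carries a strictly smaller maximum, so $\sup_{\bar D\setminus\mathcal{S}_{\delta/2}}b=\sup_D b-\rho$ for some $\rho>0$ depending only on $\delta$. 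Choosing $x_j\in\text{supp}(\zeta^{\varepsilon_j})$ with $\text{dist}(x_j,\mathcal{S})\ge\delta$ and invoking the diameter bound of Lemma \ref{lem6}, the whole support lies in $B_{L_0\varepsilon_j}(x_j)$; hence for $j$ large every point of the support stays at distance $\ge\delta/2$ from $\mathcal{S}$, so that $b\le\sup_D b-\rho$ throughout $\text{supp}(\zeta^{\varepsilon_j})$.

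Next I would produce the matching upper bound for $\mathcal{E}(\zeta^{\varepsilon_j})$. Since $F_*\ge0$ we have $\mathcal{F}_\varepsilon\ge0$ and therefore $\mathcal{E}\le E$. Writing $K(x,y)=b(x)G(x,y)+R(x,y)$, using the Green's function bound $G(x,y)\le\frac{1}{2\pi}\ln\frac{\text{diam}(D)}{|x-y|}$ and the boundedness of $R$, one gets
\[ E(\zeta^{\varepsilon_j})\le\frac{1}{4\pi}\int_D\int_D b(x)\ln\frac{1}{|x-y|}\zeta^{\varepsilon_j}(x)\zeta^{\varepsilon_j}(y)\,d\nu(x)d\nu(y)+C. \]
Because $|x-y|\le L_0\varepsilon_j<1$ on the support, the logarithm is positive there and the integrand is nonnegative, so I may replace the coefficient $b(x)$ by its supremum $\sup_D b-\rho$ over the support to obtain
\[ E(\zeta^{\varepsilon_j})\le\frac{\sup_D b-\rho}{4\pi}\int_D\int_D\ln\frac{1}{|x-y|}\zeta^{\varepsilon_j}(x)\zeta^{\varepsilon_j}(y)\,d\nu(x)d\nu(y)+C. \]

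The remaining ingredient is the crude bound $\int_D\int_D\ln\frac{1}{|x-y|}\zeta^{\varepsilon_j}\zeta^{\varepsilon_j}\,d\nu\,d\nu\le\kappa^2\ln\frac{1}{\varepsilon_j}+C$, which follows exactly as in the proof of Lemma \ref{lem8}: for fixed $x$ the bathtub principle applied to the measure $b\,\zeta^{\varepsilon_j}\,dm$, of total mass $\kappa$ and $L^\infty$-bound $\Lambda\sup_D b/\varepsilon_j^2$, gives $\int_D\ln\frac{1}{|x-y|}\zeta^{\varepsilon_j}(y)\,d\nu(y)\le\kappa\ln\frac{1}{\varepsilon_j}+C$, and one integrates once more against $d\nu(x)$. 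Combining the three displays yields $\mathcal{E}(\zeta^{\varepsilon_j})\le\frac{(\sup_D b-\rho)\kappa^2}{4\pi}\ln\frac{1}{\varepsilon_j}+C$, which together with the lower bound $\mathcal{E}(\zeta^{\varepsilon_j})\ge\frac{\kappa^2\sup_D b}{4\pi}\ln\frac{1}{\varepsilon_j}-C$ of Lemma \ref{lem3} forces $\frac{\rho\kappa^2}{4\pi}\ln\frac{1}{\varepsilon_j}\le 2C$. Since the left side diverges as $\varepsilon_j\to0^+$, this is a contradiction, proving the claim.

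The main obstacle is not any single estimate but ensuring that the logarithmic gain $\frac{\rho}{4\pi}\ln\frac1\varepsilon$ genuinely beats every error term. The crucial point is therefore bookkeeping: all the constants $C$ above must be independent of $\varepsilon$ (they may depend on $\Lambda$, now fixed by Lemma \ref{lem9}, as well as on $\kappa,\delta,D$ and $b$), so that the logarithmically growing difference dominates for small $\varepsilon$. A secondary subtlety is to keep the depth factor $b(x)$ attached to the variable $x$ (the asymmetry built into $K$) and to exploit the positivity of $\ln\frac{1}{|x-y|}$ on the support — guaranteed by the diameter bound of Lemma \ref{lem6} — \emph{before} replacing $b(x)$ by its supremum there; this is exactly what converts the energy gap into the depth gap $\rho$.
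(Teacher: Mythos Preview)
Your proof is correct and follows essentially the same contradiction strategy as the paper: bound $\mathcal{E}(\zeta^{\varepsilon_j})$ from above using that $b\le\sup_D b-\rho$ on the (small) support and compare with the lower bound of Lemma~\ref{lem3}. The only cosmetic difference is that the paper controls the residual $\int\!\!\int\ln\frac{\varepsilon_j}{|x-y|}\,\zeta^{\varepsilon_j}\zeta^{\varepsilon_j}\,d\nu\,d\nu$ by a rescaling argument, whereas you bound $\int\!\!\int\ln\frac{1}{|x-y|}\,\zeta^{\varepsilon_j}\zeta^{\varepsilon_j}\,d\nu\,d\nu$ via the bathtub principle; both yield the same $\kappa^2\ln\frac{1}{\varepsilon_j}+O(1)$ and lead to the same contradiction.
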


\begin{proof}
	We argue by way of contradiction. Assume that there exists a $\gamma_0>0$ and a subsequence $\{\varepsilon_j\}_{j=1}^\infty$ such that $\varepsilon_j \to 0^+$ as $j\to +\infty$ and $\text{supp}(\zeta^\varepsilon_j)\subset B_{\gamma_0}(Z)\subset D\backslash \mathcal{S}_{\gamma_0}$. Hence
	\begin{equation*}
	\begin{split}
	\mathcal{E}(\zeta^{\varepsilon_j})&\le \frac{1}{4\pi}\int_D\int_D b(x)\ln\frac{1}{|x-y|}\zeta^{\varepsilon_j}(x)\zeta^{\varepsilon_j}(y)d\nu(x)d\nu(y)+C\\
	&=\frac{\sup_{B_{\gamma_0}(Z)}b}{4\pi}\left(\int_D\int_D\ln\frac{\varepsilon_j}{|x-y|}\zeta^{\varepsilon_j}(x)\zeta^{\varepsilon_j}(y)d\nu(x)d\nu(y)
+\kappa^2\ln\frac{1}{\varepsilon_j}\right)+C\\
	&=\frac{\kappa^2\sup_{B_{\gamma_0}(Z)}b}{4\pi}\ln\frac{1}{\varepsilon_j}+C\int_D\int_D\ln\frac{\varepsilon_j}{|x-y|}\zeta^{\varepsilon_j}(x)\zeta^{\varepsilon-j}(y)d\nu(x)d\nu(y)+C.\\
	\end{split}
	\end{equation*}
Note that
	\begin{equation*}
	\begin{split}
	\int_D\int_D\ln\frac{\varepsilon_j}{|x-y|}\zeta^{\varepsilon_j}(x)\zeta^{\varepsilon_j}(y)&d\nu(x)d\nu(y)\\
    &\le\frac{\Lambda^2 (\sup_D b)^2}{\varepsilon_j^{4}}\int_{B_{L_0 \varepsilon_j}(Z)}\int_{B_{L_0 \varepsilon_j}(Z)}\ln\frac{\varepsilon_j}{|x-y|}d\textit{m}(x)d\textit{m}(y)\\
	&=\Lambda^2 (\sup_D b)^2\int_{B_{L_0}(0)}\int_{B_{L_0}(0)}\ln\frac{1}{|x-y|}d\textit{m}(x)d\textit{m}(y)\\	
	&\le C.
	\end{split}
	\end{equation*}
	Now we have
	\begin{equation}\label{2-17}
	\mathcal{E}(\zeta^{\varepsilon_j})\le\frac{\kappa^2\sup_{B_{\gamma_0}(Z)}b}{4\pi}\ln\frac{1}{\varepsilon_j}+C.
	\end{equation}
	But on the other hand, by Lemma \ref{lem3}, there holds
\begin{equation}\label{ad2}
  \mathcal{E}(\zeta^{\varepsilon_j})\ge \frac{\kappa^2\sup_D b}{4\pi}\ln{\frac{1}{\varepsilon_j}}-C.
\end{equation}
  Combining \eqref{2-17} and \eqref{ad2}, we get a contradiction and thus finish the proof.
\end{proof}

For the energy functional we have the following asymptotic expansions.
\begin{lemma}\label{lem11}
  As $\varepsilon \to 0^+$, we have
\begin{align}
\label{ad4}  \mathcal{E}(\zeta^\varepsilon) & =\frac{\kappa^2\sup_D b}{4\pi}\ln\frac{1}{\varepsilon}+O(1), \\
\label{ad5}  \mu^\varepsilon & =\frac{\kappa \sup_D b}{2\pi}\ln\frac{1}{\varepsilon}+O(1).
\end{align}
\end{lemma}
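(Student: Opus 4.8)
The plan is to establish matching upper and lower bounds for both $\mathcal{E}(\zeta^\varepsilon)$ and $\mu^\varepsilon$, since the lower bounds are already essentially in hand from Lemmas~\ref{lem3} and~\ref{lem4}. For the energy, Lemma~\ref{lem3} gives $\mathcal{E}(\zeta^\varepsilon)\ge \frac{\kappa^2\sup_D b}{4\pi}\ln\frac1\varepsilon - C$, so the whole task reduces to producing a matching \emph{upper} bound of the same leading order. Because $\Lambda$ is now fixed (Lemma~\ref{lem9}) and, crucially, the support of $\zeta^\varepsilon$ stays a distance $\eta>0$ from $\partial D$ (Lemma~\ref{lem7}) while having diameter $\le L_0\varepsilon$ (Lemma~\ref{lem6}), I can control the Green's-function interaction cleanly.

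The key step for the energy upper bound is to write $\mathcal{K}\zeta^\varepsilon$ via the kernel $K(x,y)=b(x)G(x,y)+R(x,y)$ and bound
\[
E(\zeta^\varepsilon)\le \frac{1}{2}\int_D\int_D b(x)G(x,y)\zeta^\varepsilon(x)\zeta^\varepsilon(y)\,d\nu(x)d\nu(y)+C,
\]
the $R$-term being uniformly bounded since $R$ is bounded and $\int\zeta^\varepsilon d\nu=\kappa$. Using $G(x,y)=\frac{1}{2\pi}\ln\frac{1}{|x-y|}-H(x,y)+\frac{1}{2\pi}\ln\mathrm{diam}(D)$ together with Lemma~\ref{hhh} (which, since $\mathrm{dist}(\mathrm{supp}\,\zeta^\varepsilon,\partial D)>\eta$, keeps $H$ bounded on the support), the leading contribution comes from the logarithmic singularity. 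Since $b$ is H\"older and $\zeta^\varepsilon$ is supported in a set of diameter $\le L_0\varepsilon$ shrinking to $\mathcal{S}$ (Lemma~\ref{lem10}), I replace $b(x)$ by $\sup_D b$ up to an $o(1)$ error, and estimate $\int\int \ln\frac{1}{|x-y|}\zeta^\varepsilon\zeta^\varepsilon\,d\nu\,d\nu\le \kappa^2\ln\frac1\varepsilon+C$ exactly as in the computation inside the proof of Lemma~\ref{lem10} (rescaling to balls of radius $L_0$). Since $\mathcal{F}_\varepsilon\ge 0$, dropping it only helps the upper bound, so $\mathcal{E}(\zeta^\varepsilon)\le \frac{\kappa^2\sup_D b}{4\pi}\ln\frac1\varepsilon+O(1)$, which combined with Lemma~\ref{lem3} yields~\eqref{ad4}.

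For the multiplier, Lemma~\ref{lem4} already gives the lower bound $\mu^\varepsilon\ge \frac{\kappa\sup_D b}{2\pi}\ln\frac1\varepsilon-C$ (now that $\Lambda$ is fixed, $f^{-1}(\Lambda)$ is an absorbed constant). To get the reverse inequality I return to identity~\eqref{2-7}, which with the patch gone (Lemma~\ref{lem9}, so $\zeta^\varepsilon=\varepsilon^{-2}f(\psi^\varepsilon)$ and the $\{\zeta=\Lambda\varepsilon^{-2}\}$ terms vanish) simplifies to
\[
2\mathcal{E}(\zeta^\varepsilon)=\int_D\zeta^\varepsilon\psi^\varepsilon\,d\nu-\frac{2}{\varepsilon^2}\int_D F_*(\varepsilon^2\zeta^\varepsilon)\,d\nu+\kappa\mu^\varepsilon,
\]
using $\psi^\varepsilon=\mathcal{K}\zeta^\varepsilon-\mu^\varepsilon$ and $\int\zeta^\varepsilon d\nu=\kappa$. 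By (H2)$'$ the middle term is controlled by $2\vartheta_1\int\zeta^\varepsilon f^{-1}(\varepsilon^2\zeta^\varepsilon)d\nu=2\vartheta_1\int\zeta^\varepsilon\psi^\varepsilon d\nu$ (since $f^{-1}(\varepsilon^2\zeta^\varepsilon)=\psi^\varepsilon$ where $\zeta^\varepsilon>0$), giving
\[
2\mathcal{E}(\zeta^\varepsilon)\ge (1-2\vartheta_1)\int_D\zeta^\varepsilon\psi^\varepsilon\,d\nu+\kappa\mu^\varepsilon.
\]
From Lemma~\ref{lem8} (with $\Lambda$ fixed) one has $\psi^\varepsilon\le C$ uniformly, so $\int\zeta^\varepsilon\psi^\varepsilon d\nu\le C\kappa$ is bounded; hence $\kappa\mu^\varepsilon\le 2\mathcal{E}(\zeta^\varepsilon)+C=\frac{\kappa^2\sup_D b}{2\pi}\ln\frac1\varepsilon+O(1)$ by~\eqref{ad4}, which is the matching upper bound and gives~\eqref{ad5}.

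The main obstacle is the energy upper bound rather than the multiplier: one must ensure that the boundary correction $H$ and the depth variation of $b$ over the shrinking support contribute only $O(1)$, and this is precisely where Lemmas~\ref{lem6}, \ref{lem7}, \ref{hhh} and~\ref{lem10} must be combined carefully. The $\mu^\varepsilon$ estimate is then a soft consequence of~\eqref{ad4} through the variational identity~\eqref{2-7}, so no further hard analysis is needed there.
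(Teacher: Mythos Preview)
Your strategy for~\eqref{ad4} is exactly the paper's: the upper bound comes from the same kernel decomposition and rescaling computation as in the proof of Lemma~\ref{lem10}, and the lower bound is Lemma~\ref{lem3}. For~\eqref{ad5} the paper is slightly more direct: in the identity
\[
2\mathcal{E}(\zeta^\varepsilon)=\int_D\zeta^\varepsilon\psi^\varepsilon\,d\nu-\frac{2}{\varepsilon^2}\int_D F_*(\varepsilon^2\zeta^\varepsilon)\,d\nu+\kappa\mu^\varepsilon
\]
it simply observes that \emph{both} extra terms are $O(1)$ (the first since $0\le\psi^\varepsilon\le C$ on $\mathrm{supp}(\zeta^\varepsilon)$ by Lemma~\ref{lem8}; the second since $F_*(\varepsilon^2\zeta^\varepsilon)\le F_*(\Lambda)$ and $|\mathrm{supp}(\zeta^\varepsilon)|\le C\varepsilon^2$ by Lemma~\ref{lem6}), so $2\mathcal{E}(\zeta^\varepsilon)=\kappa\mu^\varepsilon+O(1)$ and~\eqref{ad5} follows from~\eqref{ad4} in one stroke, without invoking Lemma~\ref{lem4} separately.

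Your derivation of the upper bound on $\mu^\varepsilon$ contains a sign slip that you should fix. Condition (H2)$'$ reads $F_*(s)\ge\vartheta_1\, s f^{-1}(s)$, so subtracting $\frac{2}{\varepsilon^2}\int F_*$ yields
\[
2\mathcal{E}(\zeta^\varepsilon)\;\le\;(1-2\vartheta_1)\int_D\zeta^\varepsilon\psi^\varepsilon\,d\nu+\kappa\mu^\varepsilon,
\]
not $\ge$ as you wrote; this inequality only produces a \emph{lower} bound on $\mu^\varepsilon$, which you already have. The repair is immediate: drop (H2)$'$ and use instead the trivial bounds $0\le F_*(s)\le s f^{-1}(s)$ (from monotonicity of $f^{-1}$) together with $0\le\int\zeta^\varepsilon\psi^\varepsilon\,d\nu\le C\kappa$, which give $|2\mathcal{E}(\zeta^\varepsilon)-\kappa\mu^\varepsilon|\le C$ directly and hence the desired upper bound.
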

\begin{proof}
  We first prove \eqref{ad4}. Arguing as in the proof of Lemma \ref{lem10}, we can obtain
  \begin{equation*}
    \mathcal{E}(\zeta^\varepsilon)\le \frac{\kappa^2\sup_D b}{4\pi}\ln\frac{1}{\varepsilon}+O(1),
  \end{equation*}
  which together with Lemma \ref{lem3} gives \eqref{ad4}. To prove \eqref{ad5}, we note that
  \begin{equation*}
  \begin{split}
     2\mathcal{E}(\zeta^\varepsilon) & =\int_D \zeta^\varepsilon \psi^\varepsilon d\nu+\kappa \mu^\varepsilon+O(1)\\
       & =\kappa \mu^\varepsilon+O(1),
  \end{split}
  \end{equation*}
    from which \eqref{ad5} clearly follows.
\end{proof}

We now turn to study the asymptotic shape of $\zeta^\varepsilon$. Recall that we denote the center of vorticity to be
\begin{equation*}
X^\varepsilon=\frac{\int_D x\zeta^\varepsilon(x)d\textit{m}(x)}{\int_D \zeta^\varepsilon(x)d\textit{m}(x)},
\end{equation*}
and define the rescaled version of $\zeta^\varepsilon$ as follows
\begin{equation*}
  \xi^\varepsilon(x)={\varepsilon^2}\zeta^\varepsilon(X^\varepsilon+\varepsilon x), \ \ x\in D^\varepsilon:=\{x\in \mathbb{R}^2\mid X^\varepsilon+\varepsilon x \in D\}.
\end{equation*}
It is obvious that $\text{supp}(\xi^\varepsilon)\subset B_{L_0}(0)$. For convenience, we set $\xi^\varepsilon(x)=0$ if $x\in \mathbb{R}^2 \backslash D^\varepsilon$.

We denote by $g^\varepsilon$ the symmetric radially nonincreasing Lebesgue-rearrangement of $\xi^\varepsilon$ centered at the origin. The following result determines the asymptotic nature of $\zeta^\varepsilon$ in terms of its scaled version $\xi^\varepsilon$.

\begin{lemma}\label{lem12}
	Every accumulation points of $\xi^\varepsilon(x)$ as $\varepsilon \to 0^+$, in the weak topology of $L^2(\mathbb{R}^2)$, are radially nonincreasing functions.
\end{lemma}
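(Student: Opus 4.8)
The plan is to prove that any weak-$L^2$ accumulation point of the rescaled densities $\xi^\varepsilon$ must be radially nonincreasing by comparing the energy of the true maximizer against the energy of its Steiner symmetrization, exploiting the fact that rearrangement strictly increases the Newtonian interaction energy unless the function is already (a translate of) a radially nonincreasing one. First I would record the elementary facts about the rescaling: since $\operatorname{supp}(\xi^\varepsilon)\subset B_{L_0}(0)$ and $0\le\xi^\varepsilon\le\Lambda$ with $\int\xi^\varepsilon\,dm$ comparable to $\kappa$, the family $\{\xi^\varepsilon\}$ is bounded in $L^2(\mathbb R^2)$ with uniformly compact support, so it is sequentially weakly compact and any accumulation point $\xi_0$ is itself supported in $\overline{B_{L_0}(0)}$, satisfies $0\le\xi_0\le\Lambda$, and $\int\xi_0\,dm=\kappa/\sup_D b$ (using Lemma~\ref{lem10} to see that $b$ is asymptotically constant $=\sup_D b$ near the shrinking support). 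I would pass $\varepsilon\to0^+$ along the subsequence realizing the accumulation point.

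\medskip

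The core of the argument is an asymptotic expansion of the kinetic energy that isolates the leading interaction term. Using the kernel representation $\mathcal K\zeta=b\int G\,\zeta\,d\nu+\int R\,\zeta\,d\nu$ together with Lemma~\ref{hhh}, Lemma~\ref{lem6} (so that $|x-y|\lesssim L_0\varepsilon$ on the support) and Lemma~\ref{lem7} (so that the boundary-correction terms $R$ and $H$ contribute only $O(1)$), I would write
\begin{equation*}
2E(\zeta^\varepsilon)=\frac{\sup_D b}{2\pi}\int_D\int_D\ln\frac{1}{|x-y|}\zeta^\varepsilon(x)\zeta^\varepsilon(y)\,d\nu(x)d\nu(y)+O(1).
\end{equation*}
After changing variables $x=X^\varepsilon+\varepsilon\tilde x$, $y=X^\varepsilon+\varepsilon\tilde y$ and subtracting the explicit $\ln(1/\varepsilon)$ term (whose coefficient $\kappa^2\sup_D b/4\pi$ matches Lemma~\ref{lem11}), the energy reduces, up to $o(1)$, to the scale-invariant functional
\begin{equation*}
I(\xi^\varepsilon):=\frac{(\sup_D b)^3}{4\pi}\int_{\mathbb R^2}\int_{\mathbb R^2}\ln\frac{1}{|\tilde x-\tilde y|}\,\xi^\varepsilon(\tilde x)\xi^\varepsilon(\tilde y)\,dm(\tilde x)dm(\tilde y),
\end{equation*}
so that $\mathcal E(\zeta^\varepsilon)=\frac{\kappa^2\sup_D b}{4\pi}\ln\frac1\varepsilon+I(\xi^\varepsilon)+o(1)$.

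\medskip

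Next I would compare with a symmetrized competitor. Let $g^\varepsilon$ be the radially nonincreasing Lebesgue-rearrangement of $\xi^\varepsilon$ about the origin, and unscale it to a function $\tilde\zeta^\varepsilon$ on $D$ centered at a fixed deepest point $\bar x\in\mathcal S\cap D$ (or, for Theorem~\ref{thm2}, a point approaching $\mathcal S$). By the Riesz rearrangement inequality applied to the kernel $\ln(1/|\cdot|)=\lim_{R\to\infty}(\ln R-\ln|\cdot|)$ on the bounded support, $I(g^\varepsilon)\ge I(\xi^\varepsilon)$, with equality iff $\xi^\varepsilon$ is, up to translation, equal to $g^\varepsilon$. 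Since $\tilde\zeta^\varepsilon\in\mathcal A_\varepsilon$ and $\mathcal F_\varepsilon(\tilde\zeta^\varepsilon)=\mathcal F_\varepsilon(\zeta^\varepsilon)$ (the convex functional is rearrangement-invariant), maximality of $\zeta^\varepsilon$ gives $I(\xi^\varepsilon)\ge I(g^\varepsilon)+o(1)$; combined with the reverse Riesz inequality this forces $I(\xi^\varepsilon)-I(g^\varepsilon)\to0$. Passing to the limit, lower semicontinuity of $\xi\mapsto-I(\xi)$ on bounded-support weak-$L^2$ sequences and the strict form of the Riesz inequality yield that the accumulation point $\xi_0$ equals its own rearrangement $g_0$, i.e.\ is radially nonincreasing.

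\medskip

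The main obstacle I anticipate is the strict rearrangement step: the Riesz inequality for the logarithmic kernel only gives $\ge$, and weak-$L^2$ convergence does not preserve rearrangements or control the ``mass at the center'' needed to run the usual strict-inequality characterization (which requires strong convergence or the Brothers–Ziemer-type nondegeneracy condition). To handle this I would either upgrade weak to strong $L^2$ convergence on the (uniformly bounded, compactly supported) family using the fact that $\int(\xi^\varepsilon)^2$ is controlled and the energy gap $I(\xi^\varepsilon)-I(g^\varepsilon)\to0$ rules out mass escaping to the boundary of the support, or argue directly that $\int\int\ln(1/|\tilde x-\tilde y|)\,(\xi_0-g_0)$-type defect vanishes and invoke that the only maximizers of $I$ at fixed $L^1$, $L^\infty$ and support constraints are radial. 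Verifying that the boundary corrections $R$ and $H$ genuinely contribute only $O(1)$ (rather than leaking into the leading order) is a second, more routine but delicate, point that relies crucially on Lemma~\ref{lem7}'s uniform separation from $\partial D$.
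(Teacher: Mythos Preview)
Your approach is essentially that of the paper, but two details need adjustment.

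First, you center the symmetrized competitor $\tilde\zeta^\varepsilon$ at a \emph{fixed} deepest point $\bar x\in\mathcal S\cap D$, whereas the paper centers it at the moving center of vorticity $X^\varepsilon$. This matters: the order-one remainders you call $o(1)$ actually contain terms such as $-\tfrac{\kappa^2 b}{2}H(\cdot,\cdot)$ and $\tfrac{\kappa^2}{2}R(\cdot,\cdot)$ evaluated, respectively, near $X^\varepsilon$ and near $\bar x$. Since Lemma~\ref{lem10} only gives $\mathrm{dist}(X^\varepsilon,\mathcal S)\to0$ and not $X^\varepsilon\to\bar x$, these contributions need not agree in the limit, and you cannot conclude $I(\xi^\varepsilon)-I(g^\varepsilon)\to0$ from maximality alone. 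Centering the competitor at $X^\varepsilon$ forces the two $O(1)$ packages $A_1(\varepsilon),A_2(\varepsilon)$ to share the same limit and repairs this. (Relatedly, neither $\tilde\zeta^\varepsilon\in\mathcal A_\varepsilon$ nor $\mathcal F_\varepsilon(\tilde\zeta^\varepsilon)=\mathcal F_\varepsilon(\zeta^\varepsilon)$ holds exactly, since $d\nu=b\,dm$; the paper corrects the mass by an explicit $O(\varepsilon^\alpha)$ perturbation.)

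Second, the obstacle you flag in your final paragraph is precisely the one the paper addresses, and your proposed fix is incomplete. Weak limits do not commute with rearrangement, so the weak limit $g^*$ of $g^\varepsilon$ need not equal $(\xi^*)^*$; thus ``strict Riesz'' in its classical form does not apply. The paper does not upgrade to strong convergence. Instead, having established $I(\xi^*)=I(g^*)$ by combining the Riesz inequality (passed to the limit; note $I$ is in fact weakly \emph{continuous} here, not merely semicontinuous, since the logarithmic kernel is Hilbert--Schmidt on $B_{L_0}(0)$) with the variational comparison, it invokes Lemma~3.2 of Burchard--Guo \cite{BG}, which yields that $\xi^*$ is a translate of the radially nonincreasing function $g^*$. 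The center-of-mass normalization $\int x\,\xi^*\,dm=0=\int x\,g^*\,dm$ then pins the translation to the identity.
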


\begin{proof}
	Up to a subsequence we may assume that $\xi^\varepsilon\to \xi^*$ and $g^\varepsilon \to g^*$ weakly in $L^2(\mathbb{R}^2)$ as $\varepsilon\to 0^+$. By Riesz's rearrangement inequality, we first have
	\begin{equation*}
	\int_{\mathbb{R}^2}\int_{\mathbb{R}^2}\ln\frac{1}{|x-y|}\xi^\varepsilon(x)\xi^\varepsilon(y)d\textit{m}(x)d\textit{m}(y)
	\le \int_{\mathbb{R}^2}\int_{\mathbb{R}^2}\ln\frac{1}{|x-y|}g^\varepsilon(x)g^\varepsilon(y)d\textit{m}(x)d\textit{m}(y).
	\end{equation*}
	Letting $\varepsilon\to 0^+$, we get
	\begin{equation}\label{2-18}
	\int_{\mathbb{R}^2}\int_{\mathbb{R}^2}\ln\frac{1}{|x-y|}\xi^*(x)\xi^*(y)d\textit{m}(x)d\textit{m}(y)
	\le \int_{\mathbb{R}^2}\int_{\mathbb{R}^2}\ln\frac{1}{|x-y|}g^*(x)g^*(y)d\textit{m}(x)d\textit{m}(y).
	\end{equation}	
	On the other hand, define
\begin{equation*}
  \hat{\zeta}^\varepsilon(x)=\varepsilon^{-2}g^\varepsilon\left(\varepsilon^{-1}(x-X^\varepsilon)\right),\ \ x\in D,
\end{equation*}
one readily verifies
\begin{equation*}
  0\le \hat{\zeta}^\varepsilon\le \frac{\Lambda}{\varepsilon^2}\ \ \text{a.e. on}\, D, \ \ \int_D \hat{\zeta}^\varepsilon d\nu=\kappa+O(\varepsilon^\alpha).
\end{equation*}
A direct calculation then yields that as $\varepsilon \to 0^+$,
	\begin{equation*}
	\mathcal{E}({\zeta^\varepsilon})= \frac{(\sup_D b)^3}{4\pi}\int_{\mathbb{R}^2}\int_{\mathbb{R}^2}\ln\frac{1}{|x-y|}\xi^\varepsilon(x)\xi^\varepsilon(y)d\textit{m}(x)d\textit{m}(y)+\frac{\kappa^2\sup_D b}{4\pi}\ln\frac{1}{\varepsilon}+A_1(\varepsilon), \\
	\end{equation*}
	and
	\begin{equation*}
	\mathcal{E}(\hat{\zeta}^\varepsilon)= \frac{(\sup_D b)^3}{4\pi}\int_{\mathbb{R}^2}\int_{\mathbb{R}^2}\ln\frac{1}{|x-y|}{g}^\varepsilon(x){g}^\varepsilon(y)d\textit{m}(x)d\textit{m}(y)+\frac{\kappa^2\sup_D b}{4\pi}\ln\frac{1}{\varepsilon}+A_2(\varepsilon),\\
	\end{equation*}
	where
	\begin{equation*}
	\lim_{\varepsilon\to 0^+}A_1(\varepsilon)=\lim_{\varepsilon\to 0^+}A_2(\varepsilon)<\infty.
	\end{equation*}
	There is an $O(\varepsilon^\alpha)$-perturbation $\tilde{\zeta}^\varepsilon$ of $\hat{\zeta}^\varepsilon$ which belongs to $\mathcal{A}_\varepsilon$. Hence
\begin{equation*}
  \mathcal{E}({\zeta^\varepsilon})\ge \mathcal{E}(\tilde{\zeta}^\varepsilon)\ge \mathcal{E}(\hat{\zeta}^\varepsilon)+o(1).
\end{equation*}
Therefore, we conclude
\begin{equation*}
  \int_{\mathbb{R}^2}\int_{\mathbb{R}^2}\ln\frac{1}{|x-y|}\xi^\varepsilon(x)\xi^\varepsilon(y)d\textit{m}(x)d\textit{m}(y)
	\ge \int_{\mathbb{R}^2}\int_{\mathbb{R}^2}\ln\frac{1}{|x-y|}g^\varepsilon(x)g^\varepsilon(y)d\textit{m}(x)d\textit{m}(y)+o(1).
\end{equation*}
Letting $\varepsilon \to 0^+$, it follows that
\begin{equation}\label{ad3}
  	\int_{\mathbb{R}^2}\int_{\mathbb{R}^2}\ln\frac{1}{|x-y|}\xi^*(x)\xi^*(y)d\textit{m}(x)d\textit{m}(y)
	\ge \int_{\mathbb{R}^2}\int_{\mathbb{R}^2}\ln\frac{1}{|x-y|}g^*(x)g^*(y)d\textit{m}(x)d\textit{m}(y).
\end{equation}
Combining \eqref{2-18} and \eqref{ad3}, we obtain
\begin{equation*}
  	\int_{\mathbb{R}^2}\int_{\mathbb{R}^2}\ln\frac{1}{|x-y|}\xi^*(x)\xi^*(y)d\textit{m}(x)d\textit{m}(y)
	= \int_{\mathbb{R}^2}\int_{\mathbb{R}^2}\ln\frac{1}{|x-y|}g^*(x)g^*(y)d\textit{m}(x)d\textit{m}(y).
\end{equation*}
By Lemma 3.2 in Burchard and Guo \cite{BG}, we know that there exists a translation $\mathcal T$ of $\mathbb{R}^2$ such that $\mathcal T\xi^*=g^*$. Note that
	\begin{equation*}
	\int_{\mathbb{R}^2}x\xi^*(x)d\textit{m}(x)=\int_{\mathbb{R}^2}x g^*(x)d\textit{m}(x)=0.
	\end{equation*}
	Thus $\xi^*=g^*$, the proof is thus completed.	
\end{proof}
\subsection{Proof of Theorem \ref{thm1}}
Now we are ready to give the proof of Theorem \ref{thm1}.
\begin{proof}[Proof of Theorem \ref{thm1}]
It follows from the above lemmas.
\end{proof}

\section{Proof of Theorem \ref{thm2}}
In this section, we give the proof of Theorem \ref{thm2}. Compare with the preceding  one, this case is a little more complicated. Although the idea of proof is similar, some of the details are different. As before, we will split the proof into several lemmas.
\subsection{Variational problem} The variational problem is the same as in subsection 2.1. Arguing as above, we first obtain the following result.
\begin{lemma}\label{add1}
 $\mathcal{E}$ attains its maximum value over $\mathcal{A}_{\varepsilon,\Lambda}$. Let $\zeta^{\varepsilon,\Lambda}$ be a maximizer, then there exists some $\mu^{\varepsilon,\Lambda}\in \mathbb{R}$ such that
	\begin{equation}\label{add2}
	\zeta^{\varepsilon,\Lambda}=\frac{1}{\varepsilon^2}f(\psi^{\varepsilon,\Lambda}){\chi}_{_{\{x\in D\mid0<\psi^{\varepsilon,\Lambda}(x)<f^{-1}(\Lambda)\}}}+\frac{\Lambda}{\varepsilon^2}\chi_{_{\{x\in D\mid\psi^{\varepsilon,\Lambda}(x) \geq f^{-1}(\Lambda)\}}} \ \ \mbox{ a.e. in }  D,
	\end{equation}
	where
	\begin{equation}\label{add3}
	\psi^{\varepsilon,\Lambda}:=\mathcal{K}\zeta^{\varepsilon,\Lambda}-\mu^{\varepsilon,\Lambda}.
	\end{equation}
	Moreover, $\mu^{\varepsilon,\Lambda}$ has the following lower bound
	\begin{equation}\label{add4}
	\mu^{\varepsilon,\Lambda} \ge -f^{-1}(\Lambda)-\kappa \|R\|_{L^{\infty}(D\times D)}.
	\end{equation}
\end{lemma}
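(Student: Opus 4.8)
The plan is to prove Lemma~\ref{add1} by repeating, essentially verbatim, the arguments already established for Theorem~\ref{thm1} in Lemmas~\ref{lem1} and~\ref{lem2}, since the variational problem is identical --- the maximization of $\mathcal{E}$ over $\mathcal{A}_{\varepsilon,\Lambda}$ does not depend on whether $\mathcal{S}\cap D\neq\varnothing$ or $\mathcal{S}\subset\partial D$. First I would establish existence of a maximizer exactly as in Lemma~\ref{lem1}: the functional $\mathcal{E}=E-\mathcal{F}_\varepsilon$ is bounded above because $K(\cdot,\cdot)\in L^1(D\times D)$ controls $E$ while $\mathcal{F}_\varepsilon$ is bounded on $\mathcal{A}_{\varepsilon,\Lambda}$ by $\varepsilon^{-2}F_*(\Lambda)|D|$; then, taking a maximizing sequence $\{\zeta_j\}$ and using weak-star sequential compactness of $\mathcal{A}_{\varepsilon,\Lambda}$ in $L^\infty(D)$, I would pass to a limit $\bar\zeta\in\mathcal{A}_{\varepsilon,\Lambda}$. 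Upper semicontinuity follows from $\lim_j E(\zeta_j)=E(\bar\zeta)$ (the $L^1$ kernel) together with the lower semicontinuity $\liminf_j\mathcal{F}_\varepsilon(\zeta_j)\ge\mathcal{F}_\varepsilon(\bar\zeta)$ coming from convexity of $\mathcal{F}_\varepsilon$.

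Next I would derive the profile~\eqref{add2}. Following Lemma~\ref{lem2}, I consider the admissible variation $\zeta_{(s)}=\zeta^{\varepsilon,\Lambda}+s(\zeta-\zeta^{\varepsilon,\Lambda})$ for arbitrary $\zeta\in\mathcal{A}_{\varepsilon,\Lambda}$ and compute the one-sided derivative at $s=0^+$. Maximality gives
\begin{equation*}
\int_D\zeta^{\varepsilon,\Lambda}\bigl(\mathcal{K}\zeta^{\varepsilon,\Lambda}-f^{-1}(\varepsilon^2\zeta^{\varepsilon,\Lambda})\bigr)d\nu\ge\int_D\zeta\bigl(\mathcal{K}\zeta^{\varepsilon,\Lambda}-f^{-1}(\varepsilon^2\zeta^{\varepsilon,\Lambda})\bigr)d\nu
\end{equation*}
for all $\zeta\in\mathcal{A}_{\varepsilon,\Lambda}$. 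Applying the bathtub principle (Lieb--Loss \cite{LL}, \S1.14) to this pointwise optimization over the constraint set yields the three-regime characterization analogous to~\eqref{2-6}, with $\mu^{\varepsilon,\Lambda}$ determined as the infimum of levels $s$ for which $|\{\mathcal{K}\zeta^{\varepsilon,\Lambda}-f^{-1}(\varepsilon^2\zeta^{\varepsilon,\Lambda})>s\}|\le\kappa\varepsilon^2/\Lambda$. Rewriting in terms of $\psi^{\varepsilon,\Lambda}=\mathcal{K}\zeta^{\varepsilon,\Lambda}-\mu^{\varepsilon,\Lambda}$ produces the stated form~\eqref{add2}.

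Finally, for the lower bound~\eqref{add4} I would argue by contradiction as in Lemma~\ref{lem2}: if $\mu^{\varepsilon,\Lambda}<-f^{-1}(\Lambda)-\kappa\|R\|_{L^\infty(D\times D)}$, then using the continuous-lake kernel representation $\mathcal{K}\zeta^{\varepsilon,\Lambda}(x)=b(x)\int_D G\,\zeta^{\varepsilon,\Lambda}\,d\nu+\int_D R\,\zeta^{\varepsilon,\Lambda}\,d\nu$ together with $G\ge0$ and $\int_D\zeta^{\varepsilon,\Lambda}d\nu=\kappa$, one obtains $\psi^{\varepsilon,\Lambda}(x)\ge f^{-1}(\Lambda)$ everywhere, forcing $\zeta^{\varepsilon,\Lambda}\equiv\Lambda\varepsilon^{-2}\chi_D$; the mass constraint then gives $\Lambda=\kappa\varepsilon^2/|D|$, contradicting the standing hypothesis $\Lambda>\kappa\varepsilon^2/|D|$. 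I do not anticipate any genuine obstacle here, since every ingredient --- the $L^1$ bound on the kernel, convexity of $\mathcal{F}_\varepsilon$, the bathtub principle, and nonnegativity of the Green's function --- is insensitive to the location of $\mathcal{S}$; the only thing to verify is that none of these steps secretly used $\mathcal{S}\cap D\neq\varnothing$, and indeed they do not, so the proof is a direct transcription of Lemmas~\ref{lem1} and~\ref{lem2}.
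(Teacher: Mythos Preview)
Your proposal is correct and matches the paper's own approach exactly: the paper simply states that ``the variational problem is the same as in subsection 2.1'' and that ``arguing as above, we first obtain the following result,'' giving no separate proof. Your transcription of the arguments of Lemmas~\ref{lem1} and~\ref{lem2} is precisely what is intended, and you are right that none of those steps depend on the location of $\mathcal{S}$.
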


\subsection{Limiting behavior}
 To study the limiting behavior of $\zeta^{\varepsilon,\Lambda}$, we first give a rough lower bound of $\mathcal{E}(\zeta^{\varepsilon,\Lambda})$.

\begin{lemma}\label{lem13}
	There holds
	\begin{equation}\label{3-1}
	\mathcal{E}(\zeta^{\varepsilon,\Lambda})\ge \frac{\kappa^2\sup_D b}{4\pi}\ln\frac{1}{\varepsilon}-\frac{\kappa^2\sup_D b}{4\pi\alpha}\ln\ln \frac{1}{\varepsilon}-C,
	\end{equation}
	where the positive constant $C$ is independent of $\varepsilon$ and $\Lambda$.
\end{lemma}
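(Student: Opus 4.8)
The plan is to run the same test-function argument as in Lemma \ref{lem3}, the only genuinely new feature being that the deepest points now lie on $\partial D$. Consequently any trial vortex concentrated where $b$ is close to $\sup_D b$ must sit near the boundary, where it loses energy through the regular part $H$ of the Green's function (Lemma \ref{hhh}). The whole proof is therefore a single optimization: I balance the depth gain, which wants the vortex close to $\partial D$, against the boundary loss $\sim\ln(1/d)$, which wants it far, where $d=\text{dist}(\bar x,\partial D)$. The optimal scale will be $d=\big(\ln\tfrac1\varepsilon\big)^{-1/\alpha}$, and this is exactly what produces the $\ln\ln\tfrac1\varepsilon$ correction with the stated constant.

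Concretely I would fix $x_0\in\mathcal S\subset\partial D$ (so $b(x_0)=\sup_D b$), set $d=\big(\ln\tfrac1\varepsilon\big)^{-1/\alpha}$, and choose an interior point $\bar x=\bar x_\varepsilon\in D$ with $\text{dist}(\bar x,\partial D)\ge c_0 d$ and $|\bar x-x_0|\le C_0 d$; this is possible since $D$ is Lipschitz (interior cone condition). Then I take the same trial function as before,
\[
\tilde\zeta^{\varepsilon,\Lambda}=\frac{b_0}{\varepsilon^2 b}\,\chi_{B_{\varepsilon\sqrt{\kappa/\pi b_0}}(\bar x)},\qquad b_0=\inf_{B_{\varepsilon\sqrt{\kappa/\pi b_0}}(\bar x)}b,
\]
and check that $\tilde\zeta^{\varepsilon,\Lambda}\in\mathcal A_{\varepsilon,\Lambda}$ for small $\varepsilon$: the mass condition $\int_D\tilde\zeta^{\varepsilon,\Lambda}d\nu=\kappa$ holds by the choice of amplitude and radius, and since $\varepsilon\big(\ln\tfrac1\varepsilon\big)^{1/\alpha}\to 0$ we have $\varepsilon\ll d$, so the support ball is compactly contained in $D$. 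The nonlinear term is $O(1)$ exactly as in Lemma \ref{lem3}, because $\varepsilon^2\tilde\zeta^{\varepsilon,\Lambda}=b_0/b\in(0,1]$ on its support, whence $\mathcal F_\varepsilon(\tilde\zeta^{\varepsilon,\Lambda})\le F_*(1)\,(\sup_D b)\,\kappa/b_0$.

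For the energy I would write $K(x,y)=b(x)G(x,y)+R(x,y)$ and $G(x,y)=\frac{1}{2\pi}\ln\frac{1}{|x-y|}+\frac{1}{2\pi}\ln\text{diam}(D)-H(x,y)$, and estimate the three resulting pieces. The $R$-piece and the constant piece $\frac{\ln\text{diam}(D)}{4\pi}\int\int b(x)\tilde\zeta^{\varepsilon,\Lambda}\tilde\zeta^{\varepsilon,\Lambda}$ are both $O(1)$. The singular piece is handled as in Lemma \ref{lem3}, and using $b(\bar x)\ge\sup_D b-\|b\|_{C^\alpha(\bar D)}|\bar x-x_0|^\alpha\ge\sup_D b-Cd^\alpha$ gives
\[
\frac12\int_D\int_D \frac{b(x)}{2\pi}\ln\frac1{|x-y|}\,\tilde\zeta^{\varepsilon,\Lambda}(x)\tilde\zeta^{\varepsilon,\Lambda}(y)\,d\nu(x)d\nu(y)\ge\frac{\kappa^2\sup_D b}{4\pi}\ln\frac1\varepsilon-\frac{C\kappa^2 d^\alpha}{4\pi}\ln\frac1\varepsilon-C,
\]
where the middle term is $O(1)$ because $d^\alpha\ln\tfrac1\varepsilon=1$ by the choice of $d$. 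The crucial piece carries $H$: since $\text{dist}(x,\partial D)\ge c_0 d/2$ for every $x$ in the support ball, the upper bound in Lemma \ref{hhh} yields $H(x,y)\le\frac{1}{2\pi}\ln\frac1d+C$, and therefore
\[
-\frac12\int_D\int_D b(x)H(x,y)\,\tilde\zeta^{\varepsilon,\Lambda}(x)\tilde\zeta^{\varepsilon,\Lambda}(y)\,d\nu(x)d\nu(y)\ge-\frac{\sup_D b\,\kappa^2}{4\pi}\ln\frac1d-C=-\frac{\kappa^2\sup_D b}{4\pi\alpha}\ln\ln\frac1\varepsilon-C,
\]
using $\ln\frac1d=\frac1\alpha\ln\ln\frac1\varepsilon$.

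Summing the three pieces and subtracting $\mathcal F_\varepsilon(\tilde\zeta^{\varepsilon,\Lambda})=O(1)$ gives the claimed lower bound for $\mathcal E(\tilde\zeta^{\varepsilon,\Lambda})$, and then $\mathcal E(\zeta^{\varepsilon,\Lambda})\ge\mathcal E(\tilde\zeta^{\varepsilon,\Lambda})$ because $\zeta^{\varepsilon,\Lambda}$ is a maximizer. The main obstacle is making the two competing loss terms cancel at the correct order simultaneously: one must verify that the single scale $d=\big(\ln\tfrac1\varepsilon\big)^{-1/\alpha}$ renders the depth-defect contribution $O(1)$ \emph{and} produces exactly the constant $\frac{\kappa^2\sup_D b}{4\pi\alpha}$ in front of $\ln\ln\tfrac1\varepsilon$. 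This forces the precise exponent $1/\alpha$ (tied to the H\"older regularity $b\in C^\alpha$) and relies on the sharp control of $H$ near the boundary furnished by Lemma \ref{hhh}.
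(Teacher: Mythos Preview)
Your proposal is correct and follows essentially the same strategy as the paper: a test vortex centered at a point $\bar x_\varepsilon$ chosen via the interior cone condition at distance $d=(\ln\tfrac1\varepsilon)^{-1/\alpha}$ from a boundary maximizer of $b$, with the H\"older regularity of $b$ controlling the depth defect and Lemma~\ref{hhh} controlling the boundary loss through $H$. One cosmetic point: your definition $b_0=\inf_{B_{\varepsilon\sqrt{\kappa/\pi b_0}}(\bar x)}b$ is circular; the paper avoids this by taking $b_0=\inf_{D_\delta}b$ over a fixed neighborhood $D_\delta=\bar D\cap B_\delta(x_0)$, which you should do as well.
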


\begin{proof}
Fix some $\bar{x}\in \mathcal{S}\subset \partial D$. Since $D$ is a Lipschitz domain, it satisfies an interior cone condition at $\bar{x}$. Hence we can choose a family of points $\{x^\varepsilon: \varepsilon>0\}\subset D$ such that there exists a number $\theta\in(0,1)$ such that
for all sufficiently small $\varepsilon$, we have
\begin{equation*}
 \frac{\theta}{(\ln\frac{1}{\varepsilon})^{\frac{1}{\alpha}}} \le \text{dist}(x^\varepsilon,\partial D)\le\text{dist}(x^\varepsilon, \bar{x})=\frac{1}{(\ln\frac{1}{\varepsilon})^{\frac{1}{\alpha}}}.
\end{equation*}
For a small $\delta>0$, we have $b_0:=\inf_{D_\delta}b>0$ with $D_\delta:=\bar{D}\cap B_\delta(\bar{x})$. Then we define
\begin{equation*}
  \tilde{\zeta}^{\varepsilon,\Lambda}=\frac{b_0}{\varepsilon^2 b}\chi_{_{B_{\varepsilon\sqrt{\kappa/\pi b_0}}(x^\varepsilon)}}.
\end{equation*}
One can easily see that $\tilde{\zeta}^{\varepsilon,\Lambda}\in \mathcal{A}_{\varepsilon,\Lambda}$ if $\varepsilon$ is sufficiently small. Recalling Lemma \ref{hhh}, a simple calculation yields to
\begin{equation*}
	\begin{split}
	\mathcal{E}(\tilde{\zeta}^{\varepsilon,\Lambda})&=\frac{1}{2}\int_D\int_D K(x,y)\tilde{\zeta}^{\varepsilon,\Lambda}(x)\tilde{\zeta}^{\varepsilon,\Lambda}(y)d\nu(x)d\nu(y)-\frac{1}{\varepsilon^2}\int_D F_*(\varepsilon^2\tilde{\zeta}^{\varepsilon,\Lambda}(x))d\nu(x) \\
& \ge \frac{\sup_Db-2\|b\|_{C^\alpha(\bar{D})}({\ln\frac{1}{\varepsilon}})^{-1}}{4\pi}\int_D\int_D \ln\frac{1}{|x-y|}\tilde{\zeta}^{\varepsilon,\Lambda}(x)\tilde{\zeta}^{\varepsilon,\Lambda}(y)d\nu(x)d\nu(y)\\
&\ \ \ \ \ \ \ \ \ \ \ -\frac{\sup_D b}{2}\int_D\int_D H(x,y)\tilde{\zeta}^{\varepsilon,\Lambda}(x)\tilde{\zeta}^{\varepsilon,\Lambda}(y)d\nu(x)d\nu(y)-C\\
&\ge \frac{\kappa^2\sup_D b}{4\pi}\ln\frac{1}{\varepsilon}-\frac{\kappa^2\sup_D b}{4\pi\alpha}\ln\ln \frac{1}{\varepsilon}-C,
	\end{split}
	\end{equation*}
where $C>0$ does not depend on $\varepsilon$ and $\Lambda$. Since $\zeta^{\varepsilon,\Lambda}$ is a maximizer, one has $\mathcal{E}(\zeta^{\varepsilon,\Lambda})\ge \mathcal{E}(\tilde{\zeta}^{\varepsilon,\Lambda})$, the proof is thus completed.
\end{proof}

Arguing as in the proof of Lemma \ref{lem4}, we immediately get
\begin{lemma}\label{lem14}
There exists a constant $C>0$, which does not depend on $\varepsilon$ and $\Lambda$, such that
\begin{equation*}
  \mu^{\varepsilon,\Lambda}\ge \frac{\kappa\sup_D b}{4\pi}\ln\frac{1}{\varepsilon}-\frac{\kappa\sup_D b}{4\pi\alpha}\ln\ln \frac{1}{\varepsilon}-|1-2\vartheta_1|f^{-1}(\Lambda)-C,
\end{equation*}
 where $\vartheta_1$ is the positive number in (H2)$'$.
\end{lemma}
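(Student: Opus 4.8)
The plan is to repeat verbatim the argument of Lemma \ref{lem4}, the only change being that the rough energy lower bound of Lemma \ref{lem13} replaces that of the interior case. First I would start from the identity
\begin{equation*}
2\mathcal{E}(\zeta^{\varepsilon,\Lambda})=\int_D\zeta^{\varepsilon,\Lambda}\mathcal{K}\zeta^{\varepsilon,\Lambda}\,d\nu-\frac{2}{\varepsilon^2}\int_D F_*(\varepsilon^2\zeta^{\varepsilon,\Lambda})\,d\nu,
\end{equation*}
and use \eqref{add3} to write $\mathcal{K}\zeta^{\varepsilon,\Lambda}=\psi^{\varepsilon,\Lambda}+\mu^{\varepsilon,\Lambda}$, which extracts the term $\kappa\mu^{\varepsilon,\Lambda}$ from the first integral.

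Next I would invoke the equivalent assumption (H2)$'$, namely $F_*(s)\ge\vartheta_1 s\,f^{-1}(s)$, to bound the $F_*$ term from below, and then use the bathtub profile \eqref{add2} of the maximizer to split the remaining integral over the region $\{0<\psi^{\varepsilon,\Lambda}<f^{-1}(\Lambda)\}$, where $\psi^{\varepsilon,\Lambda}=f^{-1}(\varepsilon^2\zeta^{\varepsilon,\Lambda})$, and the patch $\{\psi^{\varepsilon,\Lambda}\ge f^{-1}(\Lambda)\}$, where $\zeta^{\varepsilon,\Lambda}=\Lambda\varepsilon^{-2}$. Exactly as in Lemma \ref{lem4}, this reduces matters to controlling
\begin{equation*}
\int_D\zeta^{\varepsilon,\Lambda}\bigl(\psi^{\varepsilon,\Lambda}-f^{-1}(\Lambda)-2\kappa\|R\|_{L^\infty(D\times D)}\bigr)_+\,d\nu,
\end{equation*}
the leftover constants being absorbed into the $|1-2\vartheta_1|\kappa f^{-1}(\Lambda)$ and $O(1)$ contributions.

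Setting $U^{\varepsilon,\Lambda}:=(\psi^{\varepsilon,\Lambda}-f^{-1}(\Lambda)-2\kappa\|R\|_{L^\infty})_+$, the lower bound \eqref{add4} guarantees $U^{\varepsilon,\Lambda}\in H(D)$, so integration by parts gives $\int_D|\nabla U^{\varepsilon,\Lambda}|^2 b^{-2}\,d\nu=\int_D\zeta^{\varepsilon,\Lambda}U^{\varepsilon,\Lambda}\,d\nu$. I would then run the same H\"older--Sobolev chain as in \eqref{fffff}, using the measure estimate $|\{\zeta^{\varepsilon,\Lambda}=\Lambda\varepsilon^{-2}\}|\le\kappa\varepsilon^2/\Lambda$, to conclude that $\int_D\zeta^{\varepsilon,\Lambda}U^{\varepsilon,\Lambda}\,d\nu$ is bounded uniformly in $\varepsilon$ and $\Lambda$. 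This is the technical heart of the estimate, yet it is insensitive to whether $\mathcal{S}$ lies inside $D$ or on $\partial D$: it uses only \eqref{add4}, the pointwise bounds on $b$, and $\|R\|_{L^\infty}$, none of which feel the domain geometry. Hence this step transfers verbatim from Lemma \ref{lem4}.

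Putting these together yields $\kappa\mu^{\varepsilon,\Lambda}\ge 2\mathcal{E}(\zeta^{\varepsilon,\Lambda})-|1-2\vartheta_1|\kappa f^{-1}(\Lambda)-C$. Finally I would substitute the rough energy lower bound of Lemma \ref{lem13} for $\mathcal{E}(\zeta^{\varepsilon,\Lambda})$ and divide by $\kappa$; this delivers the claimed bound, with the $\ln\ln\frac{1}{\varepsilon}$ correction inherited directly from the corresponding term in Lemma \ref{lem13}. Since every genuine analytic difficulty already resides in the near-boundary construction behind Lemma \ref{lem13}, I expect no real obstacle here beyond checking that the Lemma \ref{lem4} machinery is geometry-independent, which the preceding paragraph confirms.
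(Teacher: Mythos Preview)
Your proposal is correct and is exactly the approach the paper takes: the paper's own proof consists solely of the remark ``Arguing as in the proof of Lemma~\ref{lem4}, we immediately get'' the stated bound, with Lemma~\ref{lem13} supplying the energy lower bound in place of Lemma~\ref{lem3}. In fact, carrying out the computation as you outline yields the slightly sharper leading coefficient $\frac{\kappa\sup_D b}{2\pi}$ (and correspondingly $\frac{\kappa\sup_D b}{2\pi\alpha}$ on the $\ln\ln$ term), which of course implies the stated inequality.
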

As a corollary of Lemma \ref{lem14}, for every $\gamma\in (0,1)$, we can find a constant $C>0$, which does not depend on $\varepsilon$ and $\Lambda$, such that
\begin{equation*}
   \mu^{\varepsilon,\Lambda}\ge \frac{\kappa\sup_D b}{4\pi}(1-\frac{\gamma}{2})\ln\frac{1}{\varepsilon}-|1-2\vartheta_1|f^{-1}(\Lambda)-C.
\end{equation*}

Combining this and Lemma \ref{lem5}, we obtain that the size of $\text{supp}(\zeta^{\varepsilon,\Lambda})$ is of order $\varepsilon^{\gamma}$.
\begin{lemma}\label{lem15}
	For every $\gamma\in(0,1)$, there exists a $L_0>1$, which may depend on $\Lambda$ but not on $\varepsilon$,such that
	\begin{equation}\label{ln1}
	\text{diam}\left(\text{supp}(\zeta^{\varepsilon,\Lambda})\right)\le L_0\varepsilon^\gamma.
	\end{equation}
Consequently, we have
 \begin{equation}\label{ln2}
   \lim_{\varepsilon \to 0^+}\frac{\ln \text{diam} \left(\text{supp}(\zeta^{\varepsilon,\Lambda})\right)}{\ln \varepsilon}=1.
 \end{equation}
\end{lemma}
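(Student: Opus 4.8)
The plan is to follow the template of Lemma \ref{lem6} almost verbatim, the only change being that the sharp multiplier bound of Lemma \ref{lem4} is replaced by the corollary of Lemma \ref{lem14}, after which the concentration estimate of Lemma \ref{lem5} is applied. Fix $\gamma\in(0,1)$. For every $x\in\text{supp}(\zeta^{\varepsilon,\Lambda})$ we have $\psi^{\varepsilon,\Lambda}(x)\ge 0$ (on the support $\zeta^{\varepsilon,\Lambda}>0$, and by (H1) together with \eqref{add2} this forces $\psi^{\varepsilon,\Lambda}>0$), hence $\mathcal{K}\zeta^{\varepsilon,\Lambda}(x)\ge\mu^{\varepsilon,\Lambda}$. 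I would then bound $\mu^{\varepsilon,\Lambda}$ from below by the corollary of Lemma \ref{lem14}, whose internal parameter I choose equal to $1-\gamma$ so that the deficit $\frac{\kappa\sup_D b}{2\pi\alpha}\ln\ln\frac{1}{\varepsilon}$ is eventually dominated by a fixed fraction of $\ln\frac1\varepsilon$; the leading coefficient that survives is $\frac{\kappa\sup_D b}{2\pi}$, matching the upper bound obtained from the Green-function estimate $G(x,y)\le\frac{1}{2\pi}\ln\frac{\text{diam}(D)}{|x-y|}$ of Lemma \ref{hhh} together with $b\le\sup_D b$ and the boundedness of $R$.

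Combining these two one-sided bounds on $\mathcal{K}\zeta^{\varepsilon,\Lambda}(x)$ and dividing by $\frac{\kappa\sup_D b}{2\pi}$, I expect to reach, for all sufficiently small $\varepsilon$, an inequality of the shape
\[
(1-A)\ln\frac{1}{\varepsilon}-C(\Lambda)\le\int_D\ln\frac{1}{|x-y|}\,\Gamma(y)\,dm(y),\qquad \Gamma:=\kappa^{-1}b\,\zeta^{\varepsilon,\Lambda},\ \ A:=\tfrac{1-\gamma}{2},
\]
valid for every $x\in\text{supp}(\zeta^{\varepsilon,\Lambda})$. Here $\Gamma\ge0$, $\|\Gamma\|_{L^1(D)}=\kappa^{-1}\int_D\zeta^{\varepsilon,\Lambda}\,d\nu=1$, and since $\zeta^{\varepsilon,\Lambda}\le\Lambda\varepsilon^{-2}$ one has $\|\Gamma\|_{L^\infty(D)}\le\kappa^{-1}\Lambda(\sup_D b)\,\varepsilon^{-2}$, so the hypotheses of Lemma \ref{lem5} hold with $p=\infty$ and constants $C_1,C_2$ depending only on $\Lambda$. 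Lemma \ref{lem5} then delivers $\text{diam}(\text{supp}(\zeta^{\varepsilon,\Lambda}))\le L_0\,\varepsilon^{1-2A}=L_0\,\varepsilon^{\gamma}$ with $L_0>1$ independent of $\varepsilon$, which is \eqref{ln1}.

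For the limit \eqref{ln2} I would argue by two one-sided estimates. The bound \eqref{ln1} gives $\frac{\ln\text{diam}(\text{supp}(\zeta^{\varepsilon,\Lambda}))}{\ln\varepsilon}\ge\gamma+\frac{\ln L_0}{\ln\varepsilon}$ (the inequality flips because $\ln\varepsilon<0$), so letting $\varepsilon\to0^+$ and then $\gamma\uparrow1$ shows the liminf of the ratio is at least $1$. For the matching upper bound I would use a crude volume estimate on the support: from $\kappa=\int_{\text{supp}}\zeta^{\varepsilon,\Lambda}\,d\nu\le\Lambda\varepsilon^{-2}\,\nu(\text{supp})$ and $\nu\le(\sup_D b)\,m$ one gets $m(\text{supp}(\zeta^{\varepsilon,\Lambda}))\ge c\,\varepsilon^{2}$; since a planar set of Lebesgue measure $S$ has diameter at least $2\sqrt{S/\pi}$ (isodiametric inequality), this yields $\text{diam}(\text{supp}(\zeta^{\varepsilon,\Lambda}))\ge c'\varepsilon$, hence $\frac{\ln\text{diam}}{\ln\varepsilon}\le1+\frac{\ln c'}{\ln\varepsilon}\to1$. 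Together the two bounds give \eqref{ln2}.

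The only genuine subtlety, compared with the clean situation of Lemma \ref{lem6} where the coefficients match exactly and the exponent comes out equal to $1$, is the logarithmic deficit $\ln\ln\frac1\varepsilon$ carried by $\mu^{\varepsilon,\Lambda}$ because of the vortex–boundary interaction recorded in Lemma \ref{lem13}. This is precisely what prevents the exponent $1$ and forces an arbitrary $\gamma<1$ instead; it is absorbed for free through the corollary of Lemma \ref{lem14}, so no tool beyond Lemma \ref{lem5} is required. The remaining point to watch is purely bookkeeping: the corollary's parameter must be taken as $1-\gamma$ rather than $\gamma$ in order to land on the target exponent $\gamma$ in the conclusion of Lemma \ref{lem5}.
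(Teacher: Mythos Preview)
Your proposal is correct and follows essentially the same route as the paper: derive from the corollary of Lemma~\ref{lem14} a lower bound on $\mu^{\varepsilon,\Lambda}$ with leading coefficient $\frac{\kappa\sup_D b}{2\pi}(1-\frac{\gamma'}{2})$, combine with the upper bound on $\mathcal{K}\zeta^{\varepsilon,\Lambda}$ coming from $G(x,y)\le\frac{1}{2\pi}\ln\frac{1}{|x-y|}+C$, and invoke Lemma~\ref{lem5}; then for \eqref{ln2} use \eqref{ln1} for the $\liminf$ and the crude volume bound $\kappa\le C\Lambda\varepsilon^{-2}\bigl(\text{diam}(\text{supp}\,\zeta^{\varepsilon,\Lambda})\bigr)^2$ for the $\limsup$. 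Your bookkeeping remark that the corollary's parameter must be taken as $1-\gamma$ (so that $A=\tfrac{1-\gamma}{2}$ and $1-2A=\gamma$) is exactly the point the paper leaves implicit.
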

\begin{proof}
  It remains to prove \eqref{ln2}. Indeed, by \eqref{ln1}, one has
  \begin{equation}\label{ln3}
    \liminf_{\varepsilon \to 0^+}\frac{\ln \text{diam} \left(\text{supp}(\zeta^{\varepsilon,\Lambda})\right)}{\ln \varepsilon}\ge 1.
  \end{equation}
  On the other hand, we have
  \begin{equation*}
    \kappa =\int_D \zeta^{\varepsilon,\Lambda}(x)d\nu(x)\le \frac{C\Lambda}{\varepsilon^2} \left(\text{diam} \left(\text{supp}(\zeta^{\varepsilon,\Lambda})\right)\right)^2,
  \end{equation*}
  which implies
  \begin{equation}\label{ln4}
    \limsup_{\varepsilon \to 0^+}\frac{\ln \text{diam} \left(\text{supp}(\zeta^{\varepsilon,\Lambda})\right)}{\ln \varepsilon}\le 1.
  \end{equation}
  Combining \eqref{ln3} and \eqref{ln4}, we get \eqref{ln2}.
\end{proof}
\begin{remark}
  Using Lemma \ref{lem14} and a variant of Lemma \ref{lem5}, one can obtain a much sharper estimate. More precisely, one has
  \begin{equation*}
    \text{diam}\left(\text{supp}(\zeta^{\varepsilon,\Lambda})\right)\le L_0'\,\varepsilon\left(\ln\frac{1}{\varepsilon}\right)^{2/\alpha},
  \end{equation*}
  where $L_0'>1$ may depend on $\Lambda$, but not on $\varepsilon$.
\end{remark}

Arguing as in the proof of Lemma \ref{lem10}, we know that $\text{supp}(\zeta^{\varepsilon,\Lambda})$ will shrink to $\mathcal{S}$ when $\varepsilon$ tends to zero.
\begin{lemma}\label{lem16}
For any $\delta>0$, we have $\text{supp}(\zeta^{\varepsilon,\Lambda})\subset \mathcal{S}_\delta$ if $\varepsilon>0$ is sufficiently small.
\end{lemma}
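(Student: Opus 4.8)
The plan is to mirror the contradiction argument of Lemma \ref{lem10}, replacing the lower energy bound of Lemma \ref{lem3} by the weaker one of Lemma \ref{lem13} and the diameter estimate of Lemma \ref{lem6} by that of Lemma \ref{lem15}. Suppose the conclusion fails: then there are $\delta>0$, a sequence $\varepsilon_j\to 0^+$, and points $x_j\in\text{supp}(\zeta^{\varepsilon_j,\Lambda})$ with $\text{dist}(x_j,\mathcal{S})\ge\delta$. Since $\text{diam}\left(\text{supp}(\zeta^{\varepsilon_j,\Lambda})\right)\to 0$ by Lemma \ref{lem15}, for large $j$ the whole support lies in a ball $B_{\gamma_0}(Z)$ (with $\gamma_0=\delta/2$ and $Z=x_j$) that stays at distance $\ge\gamma_0$ from $\mathcal{S}$. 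By continuity of $b$ and compactness of $\overline{B_{\gamma_0}(Z)}$, which avoids $\mathcal{S}$, we have $\sup_{B_{\gamma_0}(Z)}b=:\beta<\sup_D b$.

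Next I would bound the energy from above. Since $\mathcal{F}_\varepsilon\ge 0$, one has $\mathcal{E}(\zeta^{\varepsilon_j,\Lambda})\le E(\zeta^{\varepsilon_j,\Lambda})$, and writing $K(x,y)=b(x)G(x,y)+R(x,y)$ with $G(x,y)=\frac{1}{2\pi}\ln\frac{\text{diam}(D)}{|x-y|}-H(x,y)$, the boundedness of $R$ together with the lower bound for $H$ in Lemma \ref{hhh} gives $E(\zeta^{\varepsilon_j,\Lambda})\le\frac{1}{4\pi}\int_D\int_D b(x)\ln\frac{1}{|x-y|}\zeta^{\varepsilon_j,\Lambda}(x)\zeta^{\varepsilon_j,\Lambda}(y)\,d\nu(x)d\nu(y)+C$. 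On the support $b(x)\le\beta$; splitting $\ln\frac{1}{|x-y|}=\ln\frac{\varepsilon_j}{|x-y|}+\ln\frac{1}{\varepsilon_j}$ and using $\int_D\zeta^{\varepsilon_j,\Lambda}d\nu=\kappa$, I obtain $\mathcal{E}(\zeta^{\varepsilon_j,\Lambda})\le\frac{\kappa^2\beta}{4\pi}\ln\frac{1}{\varepsilon_j}+\frac{\beta}{4\pi}\int_D\int_D\ln\frac{\varepsilon_j}{|x-y|}\zeta^{\varepsilon_j,\Lambda}(x)\zeta^{\varepsilon_j,\Lambda}(y)d\nu(x)d\nu(y)+C$.

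The one place where the present case genuinely differs from Lemma \ref{lem10} is the control of the self-interaction term $\int_D\int_D\ln\frac{\varepsilon_j}{|x-y|}\zeta^{\varepsilon_j,\Lambda}\zeta^{\varepsilon_j,\Lambda}\,d\nu d\nu$. In Lemma \ref{lem10} this was handled by rescaling the support (of diameter $O(\varepsilon)$) to the fixed ball $B_{L_0}(0)$; here the support has diameter only $O(\varepsilon_j^\gamma)$, so that rescaling no longer yields a bounded domain. Instead I would estimate the term directly as a convolution: discarding the nonpositive part, it is at most $\int\int\left(\ln\frac{\varepsilon_j}{|x-y|}\right)_+d\mu_j(x)d\mu_j(y)$ with $\mu_j=\zeta^{\varepsilon_j,\Lambda}d\nu$. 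Since the kernel $k_j(z)=\left(\ln\frac{\varepsilon_j}{|z|}\right)_+$ satisfies $\|k_j\|_{L^1(dm)}=O(\varepsilon_j^2)$ while $\mu_j$ has total mass $\kappa$ and density bounded by $\Lambda\sup_D b/\varepsilon_j^2$, Young's inequality bounds $\|k_j*\mu_j\|_{\infty}$, and hence the whole term, by a constant independent of $\varepsilon_j$ and $\Lambda$. This yields $\mathcal{E}(\zeta^{\varepsilon_j,\Lambda})\le\frac{\kappa^2\beta}{4\pi}\ln\frac{1}{\varepsilon_j}+C$.

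Finally I would combine this with the lower bound of Lemma \ref{lem13}, namely $\mathcal{E}(\zeta^{\varepsilon_j,\Lambda})\ge\frac{\kappa^2\sup_D b}{4\pi}\ln\frac{1}{\varepsilon_j}-\frac{\kappa^2\sup_D b}{4\pi\alpha}\ln\ln\frac{1}{\varepsilon_j}-C$. Dividing by $\ln\frac{1}{\varepsilon_j}$ and letting $j\to\infty$—the $\ln\ln$ correction being of lower order—gives $\sup_D b\le\beta$, contradicting $\beta<\sup_D b$. The main obstacle is precisely the diameter-insensitive bound on the self-interaction term described above; once that is in place the argument is identical in spirit to Lemma \ref{lem10}.
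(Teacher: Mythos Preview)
Your argument is correct and follows exactly the route the paper indicates (``Arguing as in the proof of Lemma~\ref{lem10}''): set up the contradiction, bound $\mathcal{E}(\zeta^{\varepsilon_j,\Lambda})$ from above by $\frac{\kappa^2\beta}{4\pi}\ln\frac{1}{\varepsilon_j}+C$ with $\beta<\sup_D b$, and compare with the lower bound of Lemma~\ref{lem13}. The one genuine difference from the paper's written proof of Lemma~\ref{lem10} is your treatment of the self-interaction term $\int\!\!\int\ln\frac{\varepsilon_j}{|x-y|}\zeta\zeta\,d\nu d\nu$. In Lemma~\ref{lem10} the paper bounds both factors of $\zeta$ by $\Lambda/\varepsilon_j^2$ and rescales the $O(\varepsilon_j)$-diameter support to the fixed ball $B_{L_0}(0)$; that rescaling does not transplant here, since Lemma~\ref{lem15} gives only diameter $O(\varepsilon_j^\gamma)$ with $\gamma<1$. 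Your Young-type bound---using $\|(\ln\frac{\varepsilon_j}{|\cdot|})_+\|_{L^1(dm)}=\frac{\pi}{2}\varepsilon_j^2$ together with $\|\zeta^{\varepsilon_j,\Lambda}b\|_\infty\le\Lambda\sup_D b/\varepsilon_j^2$ in one variable and $\|\zeta^{\varepsilon_j,\Lambda}b\|_1=\kappa$ in the other---is diameter-free and gives the needed $O(1)$ bound directly. This is a cleaner argument than the paper's and in fact makes the diameter estimate (Lemma~\ref{lem15}) unnecessary for this particular step; it is only needed to ensure the support lies in a single small ball disjoint from $\mathcal{S}_{\gamma_0}$. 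One cosmetic point: your $\beta$ should be defined uniformly in $j$ as $\sup\{b(x):\text{dist}(x,\mathcal{S})\ge\gamma_0\}$ rather than $\sup_{B_{\gamma_0}(x_j)}b$, since the center $x_j$ moves; you implicitly use this when you say the ball ``stays at distance $\ge\gamma_0$ from $\mathcal{S}$'', so the argument is sound.
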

Note that $\mathcal{S}\subset \partial D$, $\text{supp}(\zeta^{\varepsilon,\Lambda})$ will approach to the boundary of $D$. We now prove that it will not approach $\partial D$ too fast. More precisely, we have
\begin{lemma}\label{lem17}
  For all sufficiently small $\varepsilon>0$, one has
  \begin{equation*}
	\text{dist}\left(\text{supp}(\zeta^{\varepsilon,\Lambda}),\partial D\right)\ge \frac{C_1}{(\ln{\frac{1}{\varepsilon}})^{\gamma_1}},
	\end{equation*}	
where $C_1, \gamma_1>0$ may depend on $\Lambda$, but not on $\varepsilon$.
\end{lemma}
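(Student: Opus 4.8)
The plan is to exploit the near-optimal energy lower bound of Lemma~\ref{lem13} together with the pointwise control of the regular part $H$ furnished by Lemma~\ref{hhh}. Writing the Green's function as $G(x,y)=\frac{1}{2\pi}\ln\frac{\text{diam}(D)}{|x-y|}-H(x,y)$ and recalling $K(x,y)=b(x)G(x,y)+R(x,y)$, I would decompose
\begin{equation*}
2E(\zeta^{\varepsilon,\Lambda})=\frac{1}{2\pi}\int_D\!\int_D b(x)\ln\frac{\text{diam}(D)}{|x-y|}\zeta^{\varepsilon,\Lambda}\zeta^{\varepsilon,\Lambda}\,d\nu d\nu-\int_D\!\int_D b(x)H(x,y)\zeta^{\varepsilon,\Lambda}\zeta^{\varepsilon,\Lambda}\,d\nu d\nu+\int_D\!\int_D R\,\zeta^{\varepsilon,\Lambda}\zeta^{\varepsilon,\Lambda}\,d\nu d\nu.
\end{equation*}
The idea is that the self-interaction $H$-energy must be small. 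Since $\text{supp}(\zeta^{\varepsilon,\Lambda})$ has diameter $\le L_0\varepsilon^\gamma$ by Lemma~\ref{lem15}, I would bound the free logarithmic term from above by $\frac{\sup_D b}{2\pi}\kappa^2\ln\frac{1}{\varepsilon}+C$ using the standard concentration (bathtub/rearrangement) estimate applied to the density $b\,\zeta^{\varepsilon,\Lambda}$, which satisfies $0\le b\,\zeta^{\varepsilon,\Lambda}\le \Lambda\sup_Db/\varepsilon^2$ and $\int_D b\,\zeta^{\varepsilon,\Lambda}\,dm=\kappa$; the $R$-term is controlled by $\|R\|_{L^\infty}\kappa^2=C$.

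Next I would insert the lower bound $E(\zeta^{\varepsilon,\Lambda})=\mathcal{E}(\zeta^{\varepsilon,\Lambda})+\mathcal{F}_\varepsilon(\zeta^{\varepsilon,\Lambda})\ge \mathcal{E}(\zeta^{\varepsilon,\Lambda})$ supplied by Lemma~\ref{lem13}, where $\mathcal{F}_\varepsilon\ge0$ because $F_*\ge0$. Since the leading coefficients match exactly, the $\frac{\kappa^2\sup_Db}{2\pi}\ln\frac{1}{\varepsilon}$ contributions cancel and only the logarithmic remainder survives:
\begin{equation*}
\int_D\!\int_D b(x)H(x,y)\zeta^{\varepsilon,\Lambda}\zeta^{\varepsilon,\Lambda}\,d\nu d\nu\le \frac{\kappa^2\sup_Db}{2\pi\alpha}\ln\ln\frac{1}{\varepsilon}+C.
\end{equation*}
On the other hand, setting $d^\varepsilon:=\text{dist}(\text{supp}(\zeta^{\varepsilon,\Lambda}),\partial D)$, every pair $x,y$ in the support satisfies $|x-y|\le L_0\varepsilon^\gamma$ and $\text{dist}(\cdot,\partial D)\le d^\varepsilon+L_0\varepsilon^\gamma$, so the lower bound in Lemma~\ref{hhh} gives the positive pointwise estimate $H(x,y)\ge\frac{1}{2\pi}\ln\frac{\text{diam}(D)}{2d^\varepsilon+3L_0\varepsilon^\gamma}$. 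Because Lemma~\ref{lem16} places the support in $\mathcal{S}_\delta$, where $b\ge\tfrac12\sup_Db$ for $\delta$ small, this yields $\int\!\int b H\,\zeta^{\varepsilon,\Lambda}\zeta^{\varepsilon,\Lambda}\,d\nu d\nu\ge\frac{\kappa^2\sup_Db}{4\pi}\ln\frac{\text{diam}(D)}{2d^\varepsilon+3L_0\varepsilon^\gamma}$.

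Comparing the two estimates and exponentiating, I would obtain $2d^\varepsilon+3L_0\varepsilon^\gamma\ge c\,(\ln\frac1\varepsilon)^{-\gamma_1}$ with $\gamma_1=2/\alpha$; since $\varepsilon^\gamma$ decays faster than any negative power of $\ln\frac1\varepsilon$, the term $3L_0\varepsilon^\gamma$ is absorbed for small $\varepsilon$ and the claimed bound $d^\varepsilon\ge C_1(\ln\frac1\varepsilon)^{-\gamma_1}$ follows. The main obstacle is arranging the \emph{exact} cancellation of the $\ln\frac1\varepsilon$ terms: this forces one to use the sharp constant $\frac{\kappa^2\sup_Db}{4\pi}$ in both the energy lower bound of Lemma~\ref{lem13} and the concentration upper bound for the free logarithmic energy, so that only the $\ln\ln\frac1\varepsilon$ remainder is left to dominate the $H$-energy. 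It is precisely this remainder, fed into Lemma~\ref{hhh}, that produces the slow $(\ln\frac1\varepsilon)^{-\gamma_1}$ separation rate rather than the uniform distance bound available in the interior case (Lemma~\ref{lem7}).
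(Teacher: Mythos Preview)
Your approach is correct and essentially the same as the paper's. Both arguments decompose the kinetic energy via $G=\frac{1}{2\pi}\ln\frac{\text{diam}(D)}{|x-y|}-H$, bound the free logarithmic self-interaction from above by $\frac{\kappa^2\sup_D b}{4\pi}\ln\frac{1}{\varepsilon}+C$ using the $L^\infty$-bound $\zeta^{\varepsilon,\Lambda}\le\Lambda\varepsilon^{-2}$, invoke the lower bound for $H$ in Lemma~\ref{hhh} on the support (whose diameter is controlled by Lemma~\ref{lem15}), and then cancel against the lower bound of Lemma~\ref{lem13} so that only the $\ln\ln\frac{1}{\varepsilon}$ remainder bounds $\ln\big(\text{diam}(D)/d^\varepsilon\big)$; the paper frames the computation around a reference point $x^\varepsilon\in\text{supp}(\zeta^{\varepsilon,\Lambda})$ while you use $b\ge\tfrac12\sup_Db$ on the whole support via Lemma~\ref{lem16}, but this is only a cosmetic difference and both lead to $\gamma_1=2/\alpha$.
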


\begin{proof}
Let us fix a family of points $\{x^\varepsilon\}\subset D$ with $x^\varepsilon \in \text{supp}(\zeta^{\varepsilon,\Lambda})$. By Lemma \ref{lem16}, $b(x^\varepsilon)>\sup_D b/2$ if $\varepsilon>0$ is sufficiently small. By Lemma \ref{hhh}, we have
\begin{equation*}
\begin{split}
   \mathcal{E}(\zeta^{\varepsilon,\Lambda}) & =\frac{1}{2}\int_D\int_D K(x,y)\zeta^{\varepsilon,\Lambda}(x)\zeta^{\varepsilon,\Lambda}(y)d\nu(x)d\nu(y)-\frac{1}{\varepsilon}\int_DF_*\left(\varepsilon^2\zeta^{\varepsilon,\Lambda}(x)\right)d\nu(x) \\
     & \le \frac{\sup_Db\left(b(x^\varepsilon)+O(\varepsilon^{\frac{1}{2}})\right)^2}{4\pi}\int_D\int_D \ln\frac{1}{|x-y|}\zeta^{\varepsilon,\Lambda}(x)\zeta^{\varepsilon,\Lambda}(y)d\nu(x)d\nu(y)\\
     &\ \ \ \ \ \ \ \ \ \ \ \ \ \ \ -\frac{b(x^\varepsilon)+O(\varepsilon^\frac{1}{2})}{2}\int_D\int_D H(x,y)\zeta^{\varepsilon,\Lambda}(x)\zeta^{\varepsilon,\Lambda}(y)d\nu(x)d\nu(y)+C\\
     & \le \frac{\sup_Db\left(b(x^\varepsilon)+O(\varepsilon^{\frac{1}{2}})\right)^2}{4\pi}\left(\int_D \zeta^{\varepsilon,\Lambda}(x)d\textit{m}(x)\right)^2\ln\frac{1}{\varepsilon}\\
     &\ \ \ \ \ \ \ \ \ \ \ \ \ \ \ -\frac{\kappa^2\left(b(x^\varepsilon)+O(\varepsilon^\frac{1}{2})\right)}{4\pi}\ln \frac{\text{diam} (D)}{2\,\text{dist}(\text{supp}(\zeta^{\varepsilon,\Lambda}),\partial D)+O(\varepsilon^\frac{1}{2})}+C \\
     & \le \frac{\kappa^2 \sup_D b}{4\pi}\ln\frac{1}{\varepsilon}-\frac{\kappa^2 b(x^\varepsilon)}{4\pi}\ln \frac{\text{diam} (D)}{2\,\text{dist}(\text{supp}(\zeta^{\varepsilon,\Lambda}),\partial D)+O(\varepsilon^\frac{1}{2})}+C.
\end{split}
\end{equation*}
On the other hand, by Lemma \ref{lem13}, one has
\begin{equation*}
  \mathcal{E}(\zeta^{\varepsilon,\Lambda})\ge \frac{\kappa^2\sup_D b}{4\pi}\ln\frac{1}{\varepsilon}-\frac{\kappa^2\sup_D b}{4\pi\alpha}\ln\ln \frac{1}{\varepsilon}-C.
\end{equation*}
Therefore, we deduce that
\begin{equation*}
  \frac{\kappa^2 b(x^\varepsilon)}{4\pi}\ln \frac{\text{diam} (D)}{2\,\text{dist}(\text{supp}(\zeta^{\varepsilon,\Lambda}),\partial D)+O(\varepsilon^\frac{1}{2})}\le \frac{\kappa^2\sup_D b}{4\pi\alpha}\ln\ln \frac{1}{\varepsilon}+C,
\end{equation*}
which implies
\begin{equation*}
	\text{dist}\left(\text{supp}(\zeta^{\varepsilon,\Lambda}),\partial D\right)\ge \frac{C_1}{(\ln{\frac{1}{\varepsilon}})^{\gamma_1}},
	\end{equation*}	
where $C_1, \gamma_1>0$ may depend on $\Lambda$, but not on $\varepsilon$. The proof is completed.
\end{proof}

We are now going to eliminate the patch part in \eqref{add2}. To do this, we need to establish two lemmas first. The first one is a refined version of Lemmas \ref{lem13} and \ref{lem14}. Recall that the center of vorticity is defined by
\begin{equation*}
X^\varepsilon=\frac{\int_D x\zeta^{\varepsilon,\Lambda}(x)d\textit{m}(x)}{\int_D \zeta^{\varepsilon,\Lambda}(x)d\textit{m}(x)}.
\end{equation*}

\begin{lemma}\label{lem18}
  For all sufficiently small $\varepsilon>0$, we have
  \begin{equation}\label{add5}
    \mathcal{E}(\zeta^{\varepsilon,\Lambda})\ge \frac{\kappa^2b(X^\varepsilon)}{4\pi} \ln\frac{1}{\varepsilon}-\frac{1}{2}\kappa^2 b(X^\varepsilon)H(X^\varepsilon,X^\varepsilon)-C+o_\varepsilon(1),
  \end{equation}
  consequently, there exists a constant $C>0$, which does not depend on $\varepsilon$ and $\Lambda$, such that
  \begin{equation}\label{add6}
    \mu^{\varepsilon,\Lambda}\ge \frac{\kappa b(X^\varepsilon)}{2\pi} \ln\frac{1}{\varepsilon}-\kappa b(X^\varepsilon)H(X^\varepsilon,X^\varepsilon)-|1-2\vartheta_1|f^{-1}(\Lambda)-C+o_\varepsilon(1),
  \end{equation}
    where $\vartheta_1$ is the positive number in (H2)$'$,  and $o_\varepsilon(1)\to 0^+$ as $\varepsilon \to 0^+$.
\end{lemma}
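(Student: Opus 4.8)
The plan is to prove the energy lower bound \eqref{add5} by comparison with a carefully placed trial function, and then to deduce the multiplier bound \eqref{add6} exactly as \eqref{add4} was improved to Lemma \ref{lem14}. The new feature, compared with Lemma \ref{lem13}, is that I center the trial vortex at the center of vorticity $X^\varepsilon$ of the maximizer itself, so that the self-interaction produces the sharp coefficients $b(X^\varepsilon)$ and $H(X^\varepsilon,X^\varepsilon)$ rather than their suprema. This is legitimate because for each fixed $\varepsilon$ the point $X^\varepsilon$ is a well-defined interior point, and the comparison only needs one admissible competitor.

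First I would locate $X^\varepsilon$. By Lemma \ref{lem15} the set $\text{supp}(\zeta^{\varepsilon,\Lambda})$ has diameter $\le L_0\varepsilon^\gamma$, so $X^\varepsilon$ lies within $L_0\varepsilon^\gamma$ of it; together with Lemmas \ref{lem16} and \ref{lem17} this gives $X^\varepsilon\in D$ and $d_\varepsilon:=\text{dist}(X^\varepsilon,\partial D)\ge \tfrac12 C_1(\ln\frac1\varepsilon)^{-\gamma_1}$, in particular $d_\varepsilon\gg\varepsilon$. I then set
\[\tilde\zeta^\varepsilon=\frac{b(X^\varepsilon)}{\varepsilon^2 b}\,\chi_{B_{r_\varepsilon}(X^\varepsilon)},\qquad r_\varepsilon=\varepsilon\sqrt{\kappa/(\pi b(X^\varepsilon))},\]
so that $\int_D\tilde\zeta^\varepsilon d\nu=\kappa$; since $B_{r_\varepsilon}(X^\varepsilon)\subset D$ shrinks and $b(X^\varepsilon)/b\to1$ uniformly on it, $\tilde\zeta^\varepsilon\in\mathcal{A}_{\varepsilon,\Lambda}$ for $\varepsilon$ small. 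Writing $K(x,y)=b(x)[\frac{1}{2\pi}\ln\frac{\text{diam}(D)}{|x-y|}-H(x,y)]+R(x,y)$ and rescaling $x=X^\varepsilon+\varepsilon u$, the logarithmic part of the self-energy gives $\frac{\kappa^2 b(X^\varepsilon)}{4\pi}\ln\frac1\varepsilon+O(1)$ (using $|b(x)-b(X^\varepsilon)|\le C r_\varepsilon^\alpha$ and $r_\varepsilon^\alpha\ln\frac1\varepsilon=o(1)$), the $R$-part is $O(1)$, and the $H$-part is $-\frac12\kappa^2 b(X^\varepsilon)H(X^\varepsilon,X^\varepsilon)+o(1)$; finally $\varepsilon^2\tilde\zeta^\varepsilon=b(X^\varepsilon)/b$ is bounded, so $\mathcal{F}_\varepsilon(\tilde\zeta^\varepsilon)=O(1)$ independently of $\Lambda$. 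Then $\mathcal{E}(\zeta^{\varepsilon,\Lambda})\ge\mathcal{E}(\tilde\zeta^\varepsilon)$ yields \eqref{add5}.

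The one step requiring genuine care is that the interaction integral against $H$ reproduces $H(X^\varepsilon,X^\varepsilon)$ up to $o(1)$, i.e. that $H(x,y)=H(X^\varepsilon,X^\varepsilon)+o(1)$ uniformly for $x,y\in B_{r_\varepsilon}(X^\varepsilon)$. Here Lemma \ref{hhh} is not sharp enough, since it only confines $H$ to an $O(1)$ window; instead I would use that $H(\cdot,y)$ is harmonic in $D$, so the interior gradient estimate gives $|\nabla_x H|\lesssim d_\varepsilon^{-1}\ln(1/d_\varepsilon)$ on $B_{r_\varepsilon}(X^\varepsilon)$, whence $|H(x,y)-H(X^\varepsilon,X^\varepsilon)|\lesssim r_\varepsilon d_\varepsilon^{-1}\ln(1/d_\varepsilon)\lesssim\varepsilon(\ln\frac1\varepsilon)^{\gamma_1}\ln\ln\frac1\varepsilon=o(1)$ by Lemma \ref{lem17}. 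Since $H(X^\varepsilon,X^\varepsilon)=O(\ln\ln\frac1\varepsilon)$ while $r_\varepsilon^\alpha=O(\varepsilon^\alpha)$, the factor $b(x)$ in front of $H$ may likewise be frozen at $b(X^\varepsilon)$ with error $o(1)$. This is the main obstacle; the remaining manipulations are a routine rescaling computation.

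Finally, for \eqref{add6} I would repeat verbatim the chain of inequalities from the proof of Lemma \ref{lem4} (already invoked for Lemma \ref{lem14}), which yields $2\mathcal{E}(\zeta^{\varepsilon,\Lambda})\le|1-2\vartheta_1|\kappa f^{-1}(\Lambda)+C+\kappa\mu^{\varepsilon,\Lambda}$ with $C$ independent of $\varepsilon$ and $\Lambda$. Inserting the sharp lower bound \eqref{add5} for $\mathcal{E}(\zeta^{\varepsilon,\Lambda})$ and dividing by $\kappa$ produces \eqref{add6} with the stated coefficients $\frac{\kappa b(X^\varepsilon)}{2\pi}$ and $-\kappa b(X^\varepsilon)H(X^\varepsilon,X^\varepsilon)$.
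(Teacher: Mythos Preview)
Your proposal is correct and follows essentially the same route as the paper: both center a trial vortex of radius $\sim\varepsilon$ at $X^\varepsilon$ (the paper takes density $b_0/(\varepsilon^2 b)$ with $b_0=\inf_{\mathcal{S}_\delta}b$ rather than your $b(X^\varepsilon)/(\varepsilon^2 b)$, a cosmetic difference), use the interior gradient estimate for the harmonic function $H(\cdot,y)$ together with Lemma~\ref{lem17} to freeze $H$ at $H(X^\varepsilon,X^\varepsilon)$ up to $o_\varepsilon(1)$, and then derive \eqref{add6} by feeding \eqref{add5} into the inequality chain of Lemma~\ref{lem4}.
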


\begin{proof}
    The proof is similar to that of Lemma \ref{lem13}. By Lemma \ref{lem16}, we have $\text{supp}(\zeta^{\varepsilon,\Lambda})\subset \mathcal{S}_\delta$ for all sufficiently small $\varepsilon>0$. Let us fix  $\delta>0$ so small that $b_0:=\inf_{\mathcal{S}_\delta}b>0$. Define
    \begin{equation*}
  \tilde{\zeta}^{\varepsilon,\Lambda}=\frac{b_0}{\varepsilon^2 b}\chi_{_{B_{\varepsilon\sqrt{\kappa/\pi b_0}}(X^\varepsilon)}}.
\end{equation*}
    Thanks to Lemma \ref{lem17}, we have $\tilde{\zeta}^{\varepsilon,\Lambda}\in \mathcal{A}_{\varepsilon,\Lambda}$ if $\varepsilon$ is sufficiently small. By the interior estimate for harmonic functions, we deduce that for all $x,y\in \text{supp}(\zeta^{\varepsilon,\Lambda})$,
\begin{equation}\label{add7}
\begin{split}
   |H(x,y)-&H(X^\varepsilon,X^\varepsilon)|\\
   &\le |H(x,y)-H(X^\varepsilon,y)|+|H(X^\varepsilon,y)-H(X^\varepsilon,X^\varepsilon)|\\
     & \le \frac{C|x-X^\varepsilon|}{\text{dist}\left(\text{supp}(\zeta^{\varepsilon,\Lambda}),\partial D\right)}|\sup_D H(\cdot,y)|+\frac{C|y-X^\varepsilon|}{\text{dist}\left(\text{supp}(\zeta^{\varepsilon,\Lambda}),\partial D\right)}|\sup_D H(X^\varepsilon,\cdot)| \\
     & \le {C_1\varepsilon^{\frac{1}{2}}{\left(\ln\frac{1}{\varepsilon}\right)^{\gamma_1}} \ln\ln \frac{1}{\varepsilon}},
\end{split}
\end{equation}
where $C_1>0$ may depend on $\Lambda$ but not on $\varepsilon$, and $\gamma_1$ is the positive number in Lemma \ref{lem17}. With \eqref{add7} in hand, we can calculate as in the proof of Lemma \ref{lem13} to obtain
  \begin{equation*}
    \mathcal{E}(\zeta^{\varepsilon,\Lambda})\ge \frac{\kappa^2b(X^\varepsilon)}{4\pi} \ln\frac{1}{\varepsilon}-\frac{1}{2}\kappa^2 b(X^\varepsilon)H(X^\varepsilon,X^\varepsilon)-C_2+o_\varepsilon(1)
  \end{equation*}
for some $C_2>0$ not depending on $\varepsilon$ and $\Lambda$. Note that \eqref{add6} follows from \eqref{add5} by  arguments the same as in the proof of Lemma \ref{lem4}. The proof is thus completed.
\end{proof}

The following lemma, a counterpart of Lemma \ref{lem8}, shows that  $\psi^{\varepsilon,\Lambda}$ has a prior upper bound with respect to $\Lambda$.
\begin{lemma}\label{lem19}
One has
\begin{equation*}
  \psi^{\varepsilon,\Lambda} \le |1-2\vartheta_1|  f^{-1}(\Lambda)+\frac{\kappa  \sup_D b}{4\pi} \ln \Lambda +C+o_\varepsilon(1),
\end{equation*}
where the constant $C>0$ does not depend on $\varepsilon$ and $\Lambda$, and $o_\varepsilon(1)\to 0$ as $\varepsilon \to 0^+$.
\end{lemma}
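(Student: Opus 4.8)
The plan is to transcribe the argument of Lemma~\ref{lem8} almost verbatim, the one genuinely new feature being that the support now hugs $\partial D$, so the self-interaction $H(X^\varepsilon,X^\varepsilon)$ diverges and must be carried along explicitly until it cancels. First I would observe that it suffices to establish the bound pointwise on $\mathrm{supp}(\zeta^{\varepsilon,\Lambda})$: since $\zeta^{\varepsilon,\Lambda}$ is positive exactly where $\psi^{\varepsilon,\Lambda}>0$ (by the form \eqref{add2}), we have $\psi^{\varepsilon,\Lambda}\le 0$ off the support, where the positive right-hand side dominates trivially. For $x\in\mathrm{supp}(\zeta^{\varepsilon,\Lambda})$ I would expand $\mathcal{K}\zeta^{\varepsilon,\Lambda}(x)=b(x)\int_D G(x,y)\zeta^{\varepsilon,\Lambda}\,d\nu+\int_D R(x,y)\zeta^{\varepsilon,\Lambda}\,d\nu$, absorb the uniformly bounded term $\int_D R\,\zeta^{\varepsilon,\Lambda}\,d\nu$ into $C$ (recall $\int_D\zeta^{\varepsilon,\Lambda}d\nu=\kappa$ and $R$ is bounded), and split $G=\frac{1}{2\pi}\ln\frac{\mathrm{diam}(D)}{|x-y|}-H$ as in Lemma~\ref{hhh}.

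For the logarithmic part, the box constraint $0\le\zeta^{\varepsilon,\Lambda}\le\Lambda\varepsilon^{-2}$ with fixed mass $\kappa$ forces the sharpest concentration to sit on a ball of radius of order $\varepsilon$ (independently of the true support diameter), so the bathtub estimate already used in Lemma~\ref{lem8} bounds this contribution by $\frac{\kappa b(X^\varepsilon)}{2\pi}\big(\ln\frac{1}{\varepsilon}+\frac{1}{2}\ln\Lambda\big)+o_\varepsilon(1)$; here I would use the H\"older continuity of $b$ together with the diameter bound of Lemma~\ref{lem15} (or the sharper $\varepsilon(\ln\frac{1}{\varepsilon})^{2/\alpha}$ estimate of the Remark) to replace every occurrence of $b(x)$ and $b(y)$ by $b(X^\varepsilon)$ at the cost of an $o_\varepsilon(1)$ error. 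For the correction part $-b(x)\int_D H(x,y)\zeta^{\varepsilon,\Lambda}\,d\nu$ I would invoke the uniform estimate \eqref{add7}, which says $|H(x,y)-H(X^\varepsilon,X^\varepsilon)|\to 0$ uniformly for $x,y\in\mathrm{supp}(\zeta^{\varepsilon,\Lambda})$, to rewrite it as $-\kappa\, b(X^\varepsilon)H(X^\varepsilon,X^\varepsilon)+o_\varepsilon(1)$. Collecting the two pieces yields
\[
\psi^{\varepsilon,\Lambda}(x)\le \frac{\kappa b(X^\varepsilon)}{2\pi}\Big(\ln\frac{1}{\varepsilon}+\frac{\ln\Lambda}{2}\Big)-\kappa\, b(X^\varepsilon)H(X^\varepsilon,X^\varepsilon)-\mu^{\varepsilon,\Lambda}+C+o_\varepsilon(1).
\]

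Finally I would insert the refined lower bound \eqref{add6} for $\mu^{\varepsilon,\Lambda}$ from Lemma~\ref{lem18}. Its two leading terms, $\frac{\kappa b(X^\varepsilon)}{2\pi}\ln\frac{1}{\varepsilon}$ and $-\kappa b(X^\varepsilon)H(X^\varepsilon,X^\varepsilon)$, cancel exactly against the corresponding terms above, leaving
\[
\psi^{\varepsilon,\Lambda}(x)\le |1-2\vartheta_1|f^{-1}(\Lambda)+\frac{\kappa b(X^\varepsilon)}{4\pi}\ln\Lambda+C+o_\varepsilon(1),
\]
and since $\ln\Lambda>0$ and $b(X^\varepsilon)\le\sup_D b$ the claimed inequality follows.

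The main obstacle is precisely this double cancellation. Neither $\frac{1}{2\pi}\ln\frac{1}{\varepsilon}$ nor $H(X^\varepsilon,X^\varepsilon)$ stays bounded as $\varepsilon\to0^+$ (the latter because $X^\varepsilon\to\partial D$), so the whole estimate hinges on matching their coefficients between the kernel upper bound and the multiplier lower bound \eqref{add6} up to an $o_\varepsilon(1)$ error. This is exactly why the crude bound of Lemma~\ref{lem14} is insufficient here and the center-of-vorticity refinement of Lemma~\ref{lem18} is indispensable; the legitimacy of replacing $H(x,y)$ by $H(X^\varepsilon,X^\varepsilon)$ rests in turn on Lemma~\ref{lem17}, which guarantees that $\mathrm{dist}(\mathrm{supp}(\zeta^{\varepsilon,\Lambda}),\partial D)$ decays no faster than a fixed inverse power of $\ln\frac{1}{\varepsilon}$, the very input that keeps the error in \eqref{add7} an $o_\varepsilon(1)$.
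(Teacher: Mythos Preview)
Your proposal is correct and follows the paper's proof essentially line by line: expand $\mathcal{K}\zeta^{\varepsilon,\Lambda}$ on the support, split $G$ into the logarithmic kernel and $H$, use the bathtub bound on the logarithm, replace $b$ and $H$ by their values at $X^\varepsilon$ via H\"older continuity and \eqref{add7}, and then cancel the diverging $\ln\frac{1}{\varepsilon}$ and $H(X^\varepsilon,X^\varepsilon)$ terms against the refined multiplier bound \eqref{add6} of Lemma~\ref{lem18}. Your diagnosis of why Lemma~\ref{lem18} (rather than Lemma~\ref{lem14}) is needed, and of the role of Lemma~\ref{lem17} in justifying \eqref{add7}, is also exactly the point.
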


\begin{proof}
For any $x\in \text{supp}(\zeta^{\varepsilon,\Lambda})$, we have
	\begin{equation*}
	\begin{split}
	\psi^{\varepsilon,\Lambda}(x) &  \le \frac{b(x)}{2\pi}\int_D \ln\frac{1}{|x-y|}\zeta{^{\varepsilon,\Lambda}}(y)d\nu(y)-b(x)\int_D H(x,y)\zeta{^{\varepsilon,\Lambda}}(y)d\nu(y)-\mu^{_{\varepsilon,\Lambda}}+C \\
	& \le \frac{\left(b(X^\varepsilon)+O(\varepsilon^\frac{1}{2})\right)^2}{2\pi}\int_D \ln \frac{1}{|x-y|}\zeta^{\varepsilon,\Lambda}(y) d\textit{m}(y)-\kappa b(X^\varepsilon)H(X^\varepsilon,X^\varepsilon)-\mu^{_{\varepsilon,\Lambda}}+C \\
    & \le \frac{\left(b(X^\varepsilon)+O(\varepsilon^\frac{1}{2})\right)^2}{2\pi}\left(\ln\frac{1}{\varepsilon}+\frac{\ln \Lambda}{2} \right)\int_D\zeta^{\varepsilon,\Lambda}(y) d\textit{m}(y)\\
    &~~~~~~~~~~~~~~~~~~~~~~~~~~~~~~~~~~~~-\kappa b(X^\varepsilon)H(X^\varepsilon,X^\varepsilon)-\mu^{_{\varepsilon,\Lambda}}+C \\
    & \le \frac{\kappa b(X^\varepsilon)}{2\pi} \left(\ln\frac{1}{\varepsilon}+\frac{\ln \Lambda}{2} \right)-\kappa b(X^\varepsilon)H(X^\varepsilon,X^\varepsilon)-\mu^{_{\varepsilon,\Lambda}}+C+o_\varepsilon(1).
	\end{split}
	\end{equation*}
Combining this and Lemma \ref{lem18}, we get
	\begin{equation*}
  \psi^{\varepsilon,\Lambda}(x) \le |1-2\vartheta_1|  f^{-1}(\Lambda)+\frac{\kappa  \sup_D b}{4\pi} \ln \Lambda +C+o_\varepsilon(1).
	\end{equation*}
    The proof is therefore completed.
\end{proof}
With Lemma \ref{lem19} in hand, we can now eliminate the patch part in \eqref{add2}. The proof is the same as before, we omit it here.
\begin{lemma}\label{lem20}
  If $\Lambda$ is sufficiently large(not depending on $\varepsilon$), then for all sufficiently small $\varepsilon>0$ we have
	\begin{equation}\label{patchvanish}
	|\{x\in D\mid\zeta^{\varepsilon,\Lambda}(x)={\Lambda}{\varepsilon^{-2}}\}|=0.
	\end{equation}
	As a consequence, $\zeta^{\varepsilon,\Lambda}$ has the form
	\begin{equation*}
	\zeta^{\varepsilon,\Lambda}=\frac{1}{\varepsilon^2}f(\psi^{\varepsilon,\Lambda}).
	\end{equation*}
\end{lemma}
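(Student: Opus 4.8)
The plan is to run exactly the argument of Lemma~\ref{lem9}, the obstruction being entirely supplied by the rapid (super-logarithmic) growth of $f^{-1}$ built into hypothesis (H3). First I would record the elementary pointwise fact that comes for free from the profile \eqref{add2}: on the patch set $\{x\in D\mid \zeta^{\varepsilon,\Lambda}(x)=\Lambda\varepsilon^{-2}\}$ one necessarily has
\[
\psi^{\varepsilon,\Lambda}(x)\ge f^{-1}(\Lambda),
\]
since the characteristic function $\chi_{\{\psi^{\varepsilon,\Lambda}\ge f^{-1}(\Lambda)\}}$ is exactly what supports the $\Lambda\varepsilon^{-2}$ part of $\zeta^{\varepsilon,\Lambda}$.

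Next I would confront this lower bound with the a priori upper bound for $\psi^{\varepsilon,\Lambda}$ furnished by Lemma~\ref{lem19}. On the patch set the two inequalities combine to give
\[
f^{-1}(\Lambda)\le |1-2\vartheta_1|\,f^{-1}(\Lambda)+\frac{\kappa\sup_D b}{4\pi}\ln\Lambda+C+o_\varepsilon(1),
\]
that is,
\[
\bigl(1-|1-2\vartheta_1|\bigr)\,f^{-1}(\Lambda)\le \frac{\kappa\sup_D b}{4\pi}\ln\Lambda+C+o_\varepsilon(1),
\]
with $C$ independent of $\varepsilon$ and $\Lambda$, valid whenever the patch set carries positive $\nu$-measure. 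The decisive observation is that $\vartheta_1\in(0,1)$ forces $1-|1-2\vartheta_1|\in(0,1)$, so the left-hand side is a fixed positive multiple of $f^{-1}(\Lambda)$. Rewriting (H3) in the equivalent form $\lim_{s\to+\infty}\bigl(\tau f^{-1}(s)-\ln s\bigr)=+\infty$ for every $\tau>0$ (as in \eqref{2-11}) and choosing $\tau=\tfrac{4\pi(1-|1-2\vartheta_1|)}{\kappa\sup_D b}$, one sees that $\bigl(1-|1-2\vartheta_1|\bigr)f^{-1}(\Lambda)-\tfrac{\kappa\sup_D b}{4\pi}\ln\Lambda\to+\infty$. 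Hence, fixing $\Lambda$ large enough (independently of $\varepsilon$) so that this quantity exceeds $C+1$, and then taking $\varepsilon$ small enough that $o_\varepsilon(1)<1$, the displayed inequality is violated. This contradiction shows the patch set is $\nu$-null, which is \eqref{patchvanish}; dropping the now-vanishing second term in \eqref{add2} then leaves $\zeta^{\varepsilon,\Lambda}=\varepsilon^{-2}f(\psi^{\varepsilon,\Lambda})$.

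I do not expect any genuinely new obstacle relative to the interior case. The only structural change from Lemma~\ref{lem9} is that here the boundary interaction term $b(X^\varepsilon)H(X^\varepsilon,X^\varepsilon)$ enters both Lemma~\ref{lem18} and Lemma~\ref{lem19}; since it appears with matching signs it cancels, leaving the final inequality in precisely the same shape as the interior one, so the $H$-term never pollutes the balance between $f^{-1}(\Lambda)$ and $\ln\Lambda$. The one point I would be careful to verify is that the constant $C$ and the remainder $o_\varepsilon(1)$ in Lemma~\ref{lem19} are genuinely uniform in $\Lambda$, so that the quantifier order "choose $\Lambda$ first, then let $\varepsilon\to0^+$" is legitimate; this is what allows the fixed positive multiple of $f^{-1}(\Lambda)$ to eventually dominate every competing term.
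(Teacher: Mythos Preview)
Your proposal is correct and follows exactly the paper's approach: the paper itself says ``The proof is the same as before, we omit it here,'' referring to the argument of Lemma~\ref{lem9}, and your writeup reproduces that argument faithfully, including the cancellation of the $H(X^\varepsilon,X^\varepsilon)$ term between Lemmas~\ref{lem18} and~\ref{lem19}. One small remark on your final caveat: for the quantifier order ``fix $\Lambda$, then send $\varepsilon\to0^+$'' you only need $C$ independent of $\varepsilon$ and $o_\varepsilon(1)\to0$ for each fixed $\Lambda$, not full uniformity of $o_\varepsilon(1)$ in $\Lambda$---and this weaker requirement is clearly met.
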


In the rest of this section, we fix the parameter $\Lambda$ such that Lemma \ref{lem20} holds. To simplify notations, we shall abbreviate $(\mathcal{A}_{\varepsilon,\Lambda},\zeta^{\varepsilon,\Lambda},\psi^{\varepsilon,\Lambda}, \mu^{\varepsilon,\Lambda})$ as $(\mathcal{A}_{\varepsilon},\zeta^{\varepsilon}, \psi^{\varepsilon}, \mu^{\varepsilon})$. From the above proofs, it is not hard to obtain the following asymptotic expansions.
\begin{lemma}\label{lem21}
  As $\varepsilon \to 0^+$, one has
\begin{align*}
 \mathcal{E}(\zeta^\varepsilon) & =\frac{\kappa^2\sup_D b}{4\pi}\ln\frac{1}{\varepsilon}+O\left(\ln\ln\frac{1}{\varepsilon}\right), \\
  \mu^\varepsilon & =\frac{\kappa \sup_D b}{2\pi}\ln\frac{1}{\varepsilon}+O\left(\ln\ln\frac{1}{\varepsilon}\right).
\end{align*}
\end{lemma}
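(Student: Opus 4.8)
The plan is to squeeze each quantity between a lower bound that is already available and a matching upper bound, and then to pass from the energy expansion to the one for $\mu^\varepsilon$ via the Euler--Lagrange identity. For the energy the lower bound is precisely Lemma \ref{lem13}, which reads
\begin{equation*}
\mathcal{E}(\zeta^\varepsilon)\ge \frac{\kappa^2\sup_D b}{4\pi}\ln\frac1\varepsilon-\frac{\kappa^2\sup_D b}{4\pi\alpha}\ln\ln\frac1\varepsilon-C,
\end{equation*}
so all that remains is an upper bound of the same leading order with an $O\!\left(\ln\ln\frac1\varepsilon\right)$ remainder.

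For that upper bound I would reuse the first chain of inequalities in the proof of Lemma \ref{lem17}. Writing $K=b(x)G+R$ and $G(x,y)=\frac1{2\pi}\ln\frac{\text{diam}(D)}{|x-y|}-H(x,y)$, the bounded kernel $R$ contributes $O(1)$ and the convex term $-\mathcal{F}_\varepsilon(\zeta^\varepsilon)\le 0$ may be discarded. On $\text{supp}(\zeta^\varepsilon)$ the depth $b$ is almost constant, its oscillation being $O(\varepsilon^{\gamma\alpha})$ by Lemma \ref{lem15}; replacing two of the three depth factors by $b(x^\varepsilon)$ at a fixed $x^\varepsilon\in\text{supp}(\zeta^\varepsilon)$, bounding the third by $\sup_D b$, and estimating the self-interaction $\int\!\int\ln\frac1{|x-y|}\zeta^\varepsilon\zeta^\varepsilon\,dm\,dm$ by the extremal configuration $\Lambda\varepsilon^{-2}\chi_{B_{r_0}}$ (a ball of radius $r_0=O(\varepsilon)$, as in the rearrangement arguments of Lemmas \ref{lem10} and \ref{lem12}), gives exactly
\begin{equation*}
\mathcal{E}(\zeta^\varepsilon)\le \frac{\kappa^2\sup_D b}{4\pi}\ln\frac1\varepsilon-\frac{\kappa^2 b(x^\varepsilon)}{4\pi}\ln\frac{\text{diam}(D)}{2\,\text{dist}\!\left(\text{supp}(\zeta^\varepsilon),\partial D\right)+O(\varepsilon^{1/2})}+C.
\end{equation*}
Since $\text{supp}(\zeta^\varepsilon)$ lies near $\partial D$ (Lemma \ref{lem16}), the argument of the last logarithm exceeds $1$ for small $\varepsilon$, so the subtracted term is nonnegative and may be dropped; equivalently, Lemma \ref{hhh} shows $H\ge0$ on the support, so the $-H$ contribution only lowers the energy. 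Hence $\mathcal{E}(\zeta^\varepsilon)\le \frac{\kappa^2\sup_D b}{4\pi}\ln\frac1\varepsilon+O(1)$, which together with Lemma \ref{lem13} yields the first expansion.

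To transfer this to $\mu^\varepsilon$ I would argue exactly as in Lemma \ref{lem11}. After Lemma \ref{lem20} one has $\zeta^\varepsilon=\varepsilon^{-2}f(\psi^\varepsilon)$ with $\psi^\varepsilon=f^{-1}(\varepsilon^2\zeta^\varepsilon)$ on $\{\zeta^\varepsilon>0\}$, so $\psi^\varepsilon=\mathcal{K}\zeta^\varepsilon-\mu^\varepsilon$ gives
\begin{equation*}
2\mathcal{E}(\zeta^\varepsilon)=\int_D\zeta^\varepsilon\psi^\varepsilon\,d\nu+\kappa\mu^\varepsilon-2\mathcal{F}_\varepsilon(\zeta^\varepsilon).
\end{equation*}
Both of the non-$\mu$ terms are $O(1)$: since $\varepsilon^2\zeta^\varepsilon\le\Lambda$ and $f^{-1}$ is increasing, $\int_D\zeta^\varepsilon\psi^\varepsilon\,d\nu=\int_D\zeta^\varepsilon f^{-1}(\varepsilon^2\zeta^\varepsilon)\,d\nu\le f^{-1}(\Lambda)\kappa$, and from $F_*(\varepsilon^2\zeta^\varepsilon)\le\varepsilon^2\zeta^\varepsilon f^{-1}(\varepsilon^2\zeta^\varepsilon)\le\varepsilon^2\zeta^\varepsilon f^{-1}(\Lambda)$ one gets $\mathcal{F}_\varepsilon(\zeta^\varepsilon)\le f^{-1}(\Lambda)\kappa$. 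Therefore $\kappa\mu^\varepsilon=2\mathcal{E}(\zeta^\varepsilon)+O(1)$, and the energy expansion immediately gives $\mu^\varepsilon=\frac{\kappa\sup_D b}{2\pi}\ln\frac1\varepsilon+O\!\left(\ln\ln\frac1\varepsilon\right)$.

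The rearrangement estimate and the $R$- and $H$-bookkeeping are routine repetitions of earlier lemmas, so the only delicate point is making the leading coefficient come out to be exactly $\frac{\kappa^2\sup_D b}{4\pi}$. The subtlety is that $b(x^\varepsilon)$ is \emph{not} known to approach $\sup_D b$ at any controlled rate, so one cannot simply replace $b(x)$ by $\sup_D b$ inside the self-energy. The resolution, already used in Lemma \ref{lem17}, is that $b$ is nearly constant on the tiny support, so that $b(x^\varepsilon)\int_D\zeta^\varepsilon\,dm=\kappa+O(\varepsilon^{\gamma\alpha})$ and hence $b(x^\varepsilon)^2\big(\int_D\zeta^\varepsilon\,dm\big)^2=\kappa^2+o(1)$; the single remaining depth factor is then harmlessly bounded by $\sup_D b$. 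Once this substitution is in place the $\ln\frac1\varepsilon$-coefficients match and the entire discrepancy reduces to the $O\!\left(\ln\ln\frac1\varepsilon\right)$ inherited from Lemma \ref{lem13}.
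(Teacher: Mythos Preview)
Your proposal is correct and matches the approach the paper intends. The paper gives no detailed argument for this lemma, only the remark that it is ``not hard to obtain'' from the preceding proofs; your outline does precisely that, combining the lower bound of Lemma~\ref{lem13}, the upper-bound chain from the proof of Lemma~\ref{lem17} (with the $-H$ contribution discarded via Lemma~\ref{hhh}), and the Euler--Lagrange identity argument of Lemma~\ref{lem11} to pass from $\mathcal{E}(\zeta^\varepsilon)$ to $\mu^\varepsilon$.
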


We now turn to determine the asymptotic shape of $\zeta^\varepsilon$. Recall that we define the rescaled version of $\zeta^\varepsilon$ as follows
\begin{equation*}
  \xi^\varepsilon(x)={\varepsilon^2}\zeta^\varepsilon(X^\varepsilon+\varepsilon x), \ \ x\in D^\varepsilon:=\{x\in \mathbb{R}^2\mid X^\varepsilon+\varepsilon x \in D\}.
\end{equation*}
For convenience, we set $\xi^\varepsilon(x)=0$ if $x\in \mathbb{R}^2 \backslash D^\varepsilon$. As before, we denote by $g^\varepsilon$ the symmetric radially nonincreasing Lebesgue-rearrangement of $\xi^\varepsilon$ centered at the origin. Define
\begin{equation*}
  \hat{\zeta}^\varepsilon(x)=\varepsilon^{-2}g^\varepsilon\left(\varepsilon^{-1}(x-X^\varepsilon)\right),\ \ x\in D.
\end{equation*}
Thanks to Lemmas \ref{lem15} and \ref{lem17}, we have
\begin{equation*}
  0\le \hat{\zeta}^\varepsilon\le \frac{\Lambda}{\varepsilon^2}\ \ \text{a.e. on}\, D, \ \ \int_D \hat{\zeta}^\varepsilon d\nu=\kappa+O(\varepsilon^{\alpha/2}).
\end{equation*}
Moreover, one has
\begin{equation*}
\begin{split}
    & \text{diam}\left(\text{supp}(\hat{\zeta}^\varepsilon)\right)\le C \varepsilon^\gamma. \ \ \forall\ \gamma\in (0,1), \\
     & \text{dist}\left(\text{supp}(\hat{\zeta}^\varepsilon),\partial D \right) \ge \frac{C_1}{(\ln{\frac{1}{\varepsilon}})^{\gamma_1}}.
\end{split}
\end{equation*}
With these results in hand, we can argue as in the proof of Lemma \ref{lem12} to obtain the following result, which determines the asymptotic nature of $\zeta^\varepsilon$ in terms of its scaled version $\xi^\varepsilon$. Since the proof is almost the same without any significant changes, we omit it.

\begin{lemma}\label{lem22}
	Every accumulation points of $\xi^\varepsilon(x)$ as $\varepsilon \to 0^+$, in the weak topology of $L^2(\mathbb{R}^2)$, are radially nonincreasing functions.
\end{lemma}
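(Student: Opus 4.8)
The plan is to follow verbatim the strategy of Lemma~\ref{lem12}, exploiting the rigidity case of Riesz's rearrangement inequality together with the fact that $\zeta^\varepsilon$ is an energy maximizer. First I would pass to a subsequence along which $\xi^\varepsilon\to \xi^*$ and $g^\varepsilon \to g^*$ weakly in $L^2(\mathbb{R}^2)$; both limits are supported in a fixed ball by the rescaled diameter bound \eqref{ln1}. Riesz's inequality applied to $\xi^\varepsilon$ and its symmetric decreasing rearrangement $g^\varepsilon$ gives
\begin{equation*}
\int_{\mathbb{R}^2}\int_{\mathbb{R}^2}\ln\frac{1}{|x-y|}\xi^\varepsilon(x)\xi^\varepsilon(y)\,d\textit{m}(x)d\textit{m}(y) \le \int_{\mathbb{R}^2}\int_{\mathbb{R}^2}\ln\frac{1}{|x-y|}g^\varepsilon(x)g^\varepsilon(y)\,d\textit{m}(x)d\textit{m}(y),
\end{equation*}
and letting $\varepsilon\to 0^+$ produces one half of the desired equality, the analog of \eqref{2-18}.

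Next, to obtain the reverse inequality, I would take $\hat{\zeta}^\varepsilon$ as defined above (the unscaling of $g^\varepsilon$ centered at $X^\varepsilon$), which by Lemmas~\ref{lem15} and \ref{lem17} carries mass $\kappa+O(\varepsilon^{\alpha/2})$ on a support of diameter $\le C\varepsilon^\gamma$ lying at distance $\ge C_1(\ln\frac1\varepsilon)^{-\gamma_1}$ from $\partial D$. The crucial step is to expand both energies in the form
\begin{equation*}
\mathcal{E}(\zeta^\varepsilon) = \frac{(\sup_D b)^3}{4\pi}\int_{\mathbb{R}^2}\!\int_{\mathbb{R}^2} \ln\frac{1}{|x-y|}\xi^\varepsilon(x)\xi^\varepsilon(y)\,d\textit{m}(x)d\textit{m}(y) + (\text{common terms}) + A_1(\varepsilon),
\end{equation*}
and the identical formula for $\mathcal{E}(\hat\zeta^\varepsilon)$ with $g^\varepsilon$ in place of $\xi^\varepsilon$ and remainder $A_2(\varepsilon)$. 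An $O(\varepsilon^{\alpha/2})$ correction of $\hat\zeta^\varepsilon$ lies in $\mathcal{A}_\varepsilon$, so maximality of $\zeta^\varepsilon$ forces $\mathcal{E}(\zeta^\varepsilon)\ge\mathcal{E}(\hat\zeta^\varepsilon)+o(1)$; subtracting the common terms yields the reverse logarithmic-energy inequality, and combining both directions gives equality in the limit. At that point Lemma~3.2 of Burchard and Guo \cite{BG} supplies a translation $\mathcal{T}$ with $\mathcal{T}\xi^*=g^*$, and the normalization $\int_{\mathbb{R}^2} x\,\xi^*\,d\textit{m}=\int_{\mathbb{R}^2} x\,g^*\,d\textit{m}=0$ forces $\mathcal{T}=\mathrm{id}$, whence $\xi^*=g^*$ is radially nonincreasing.

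The main obstacle, and the only genuine difference from Lemma~\ref{lem12}, is verifying that $A_1(\varepsilon)$ and $A_2(\varepsilon)$ share the same finite limit. Here the contributions carrying the correction function $H$ (and the regular part $R$) need not remain bounded, since they may grow like $\ln\ln\frac1\varepsilon$ as the vortex approaches $\partial D$, so one cannot simply discard them into an $O(1)$ term as in the interior case. The resolution is that $\zeta^\varepsilon$ and $\hat\zeta^\varepsilon$ carry the same mass $\kappa+o(1)$ concentrated at the same center $X^\varepsilon$ on a set of diameter $\le C\varepsilon^\gamma$; by the interior gradient estimate for harmonic functions combined with the lower bound on $\mathrm{dist}(\mathrm{supp},\partial D)$ from Lemma~\ref{lem17}, exactly as in \eqref{add7}, one obtains $H(x,y)=H(X^\varepsilon,X^\varepsilon)+o(1)$ uniformly on the supports. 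Hence the $H$- and $R$-contributions to $A_1(\varepsilon)$ and $A_2(\varepsilon)$ coincide up to $o(1)$: the potentially divergent pieces cancel in the comparison $\mathcal{E}(\zeta^\varepsilon)-\mathcal{E}(\hat\zeta^\varepsilon)$, and the argument of Lemma~\ref{lem12} carries over without further change.
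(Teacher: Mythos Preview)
Your approach coincides with the paper's, which simply states that the argument of Lemma~\ref{lem12} carries over ``without any significant changes'' and omits the proof; your handling of the $H$-terms via \eqref{add7}, so that the potentially divergent boundary contributions cancel in the comparison $\mathcal{E}(\zeta^\varepsilon)-\mathcal{E}(\hat\zeta^\varepsilon)$, is exactly the refinement the paper has set up in Lemmas~\ref{lem17}--\ref{lem19}.

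One point deserves correction: your claim that ``both limits are supported in a fixed ball by the rescaled diameter bound~\eqref{ln1}'' does not hold here. The estimate~\eqref{ln1} gives $\mathrm{diam}\big(\mathrm{supp}\,\zeta^\varepsilon\big)\le L_0\varepsilon^\gamma$ with $\gamma<1$, so after rescaling by $\varepsilon^{-1}$ one obtains only $\mathrm{diam}\big(\mathrm{supp}\,\xi^\varepsilon\big)\le L_0\varepsilon^{\gamma-1}\to\infty$; even the sharper bound from the remark following Lemma~\ref{lem15} yields $\mathrm{diam}\big(\mathrm{supp}\,\xi^\varepsilon\big)\le L_0'(\ln\tfrac1\varepsilon)^{2/\alpha}$, which still diverges. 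This is a genuine difference from the interior setting, where Lemma~\ref{lem6} furnishes a fixed ball and makes the limit passage in the logarithmic energies routine. The paper does not address this issue explicitly either, so your outline is as complete as the paper's own treatment---but the fixed-ball justification you invoke is not available in this case, and the convergence $I(\xi^\varepsilon)\to I(\xi^*)$, $I(g^\varepsilon)\to I(g^*)$ needs a separate argument (e.g.\ exploiting the uniform $L^1\cap L^\infty$ bounds together with the radial monotonicity of $g^\varepsilon$).
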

\subsection{Proof of Theorem \ref{thm2}}
\begin{proof}[Proof of Theorem \ref{thm2}]
It follows from the above lemmas.
\end{proof}

\section{Proofs of Theorems \ref{thm3} and \ref{thm4}}
In this section, we study the nonlinear stability of the above solutions. Under different assumptions, we prove that these steady solutions are stable for the vorticity dynamics \eqref{tran}. Suppose $\zeta\in L^\infty(D)$ satisfying $E(\zeta)=\sup_{\mathcal{R}(\zeta)}E$, then by Burton \cite{Bur1}, there exists some increasing function $\varphi$ such that $\zeta=\varphi(\mathcal{K}\zeta)$ almost everywhere in $D$. The following lemma provides a criterion for steady weak solutions of equation \eqref{time2}, from which it follows that such a $\zeta$ is a steady weak solution.
 For its proof, see Dekeyser \cite{De1}\,(see also Burton \cite{Bur2}).
\begin{lemma}\label{lem23}
   Let $\zeta\in L^\infty(D)$ and $\mathbf{v}=b^{-1}\nabla^\perp \mathcal{K}\zeta$. Suppose that $\zeta=\varphi(\mathcal{K}\zeta)$ a.e. in $D$ for some monotonic function $\varphi$. Then $\zeta$ is a steady weak solution of equation \eqref{time2}. That is, for all $\phi \in C^\infty_{\text{c}}(D)$, we have
  \begin{equation*}
    \int_D \zeta \mathbf{v}\cdot\nabla \phi d\nu=0.
  \end{equation*}
\end{lemma}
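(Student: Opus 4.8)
The plan is to verify the weak formulation by a direct computation, exploiting the structure $\zeta=\varphi(\mathcal{K}\zeta)$ to express the integrand as a divergence-free transport term. First I would unwind the definitions: since $\mathbf{v}=b^{-1}\nabla^\perp\mathcal{K}\zeta$ and we write $\psi:=\mathcal{K}\zeta$, the quantity to control is $\int_D \zeta\,\mathbf{v}\cdot\nabla\phi\,d\nu$. Recalling that $d\nu = b\,dm$, the factor $b^{-1}$ in $\mathbf{v}$ cancels against $b$ in $d\nu$, so I expect
\begin{equation*}
\int_D \zeta\,\mathbf{v}\cdot\nabla\phi\,d\nu=\int_D \zeta\,\nabla^\perp\psi\cdot\nabla\phi\,dm.
\end{equation*}
The heart of the matter is then to see that this integral vanishes because $\zeta$ and $\psi$ are functionally dependent through the monotone relation $\zeta=\varphi(\psi)$.

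The key step is to rewrite $\zeta\,\nabla^\perp\psi$ as a perfect (perpendicular) gradient. Let $\Phi$ be a primitive of $\varphi$, so that $\nabla^\perp\Phi(\psi)=\varphi(\psi)\nabla^\perp\psi=\zeta\,\nabla^\perp\psi$, at least formally. Then the integrand becomes $\nabla^\perp\Phi(\psi)\cdot\nabla\phi$, and since $\nabla^\perp a\cdot\nabla b = \partial_2 a\,\partial_1 b-\partial_1 a\,\partial_2 b$ is a null Lagrangian (it integrates to zero against compactly supported test functions, being itself a divergence $\operatorname{div}(\phi\,\nabla^\perp\Phi(\psi))$ after integration by parts, using $\operatorname{div}\nabla^\perp\equiv0$), I would integrate by parts to obtain
\begin{equation*}
\int_D \nabla^\perp\Phi(\psi)\cdot\nabla\phi\,dm=-\int_D \phi\,\operatorname{div}\!\big(\nabla^\perp\Phi(\psi)\big)\,dm=0,
\end{equation*}
with no boundary term since $\phi\in C_c^\infty(D)$. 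This is the Arnol'd-type observation that the transport term is a Jacobian and hence has no net effect against test functions.

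The main obstacle is rigor, not strategy: the function $\varphi$ furnished by Burton's theorem is merely monotonic, hence only measurable, so $\Phi(\psi)=\int^\psi\varphi$ need not be smooth, and the chain rule $\nabla\Phi(\psi)=\varphi(\psi)\nabla\psi$ requires justification. To handle this I would work at the level of Sobolev regularity: by Theorem \ref{thm1}(i) (or the weak-solution definition of $\mathcal{K}$) we have $\psi\in W^{2,p}_{\text{loc}}(D)$ for all $p>1$, so $\psi$ is continuous and $\nabla\psi\in W^{1,p}_{\text{loc}}$. Since $\varphi$ is monotone and bounded on the compact range of $\psi$, the composition $\Phi(\psi)$ lies in $W^{1,p}_{\text{loc}}$ and the chain rule for monotone (Lipschitz-after-truncation) outer functions composed with Sobolev functions applies, giving $\nabla\Phi(\psi)=\varphi(\psi)\nabla\psi$ a.e. An efficient way to sidestep the regularity of $\varphi$ entirely is to approximate: choose smooth increasing $\varphi_n\to\varphi$ (pointwise, say), for which the identity $\int_D\varphi_n(\psi)\nabla^\perp\psi\cdot\nabla\phi\,dm=0$ holds by the clean chain-rule computation above, and then pass to the limit using dominated convergence, since $\varphi_n(\psi)$ stays uniformly bounded and converges a.e. while $\nabla^\perp\psi\cdot\nabla\phi\in L^1$.

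Finally I would assemble the pieces: establish the change of variables $d\nu=b\,dm$ and the cancellation of $b^{-1}$, record that $\psi=\mathcal{K}\zeta\in W^{2,p}_{\text{loc}}(D)$ so all integrals are well-defined on the compact support of $\phi$, perform the approximation-and-limit argument to conclude $\int_D\zeta\,\nabla^\perp\psi\cdot\nabla\phi\,dm=0$, and translate back to $\int_D\zeta\,\mathbf{v}\cdot\nabla\phi\,d\nu=0$, which is exactly the asserted steady weak-solution identity. The monotonicity of $\varphi$ is used only to guarantee the approximating sequence and the boundedness needed for the limit; the sign of $\varphi$ plays no role, so the argument covers both increasing and decreasing $\varphi$ as stated.
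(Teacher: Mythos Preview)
The paper does not actually prove this lemma; it states the result and refers the reader to Dekeyser \cite{De1} and Burton \cite{Bur2} for the proof. Your argument is the standard one found in those references: reduce to $\int_D \zeta\,\nabla^\perp\psi\cdot\nabla\phi\,dm$ via the cancellation of $b$, recognize the integrand as a Jacobian $\nabla^\perp\Phi(\psi)\cdot\nabla\phi$ with $\Phi'=\varphi$, and integrate by parts using $\operatorname{div}\nabla^\perp\equiv 0$. The approximation of the monotone $\varphi$ by smooth functions to justify the chain rule is also the usual device.

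Two small remarks. First, your appeal to Theorem~\ref{thm1}(i) for $\psi\in W^{2,p}_{\text{loc}}$ is slightly misplaced: in this lemma $\zeta$ is an arbitrary $L^\infty$ function, not one of the specific maximizers; the regularity you need comes directly from elliptic theory applied to $\mathcal{L}\psi=\zeta$ with $\zeta\in L^\infty$ and $b\in C^1_{\text{loc}}$. Second, when passing to the limit in $\varphi_n(\psi)\to\varphi(\psi)$, you should note that on any level set $\{\psi=c\}$ of positive measure one has $\nabla\psi=0$ a.e., so the possible discontinuities of $\varphi$ at such $c$ do not contribute to the integral; this is what makes the dominated-convergence step clean. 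With these adjustments your sketch is complete and matches the cited literature.
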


When (D,b) is smooth, weak solutions of the Cauchy problem exist globally and these solutions are unique. Moreover, the potential vorticity $\zeta(t,\cdot)$ at any time $t\ge 0$ is a rearrangement of the initial potential vorticity $\zeta_0$. More precisely, we have
\begin{lemma}[\cite{De2}]\label{lem24}
   Let $(D,b)$ be a smooth lake and $\zeta_0\in L^\infty(D)$. Then there exists a unique weak solution $\zeta(t,x) \in L^\infty\left([0,\infty)\times D, \mathbb{R}\right)\cap C\left([0,+\infty);L^p(D)\right)$ for all $p\in[0,+\infty)$ satisfying
   \begin{itemize}
     \item [(i)] $\zeta(t,\cdot) \in \mathcal{R}(\zeta_0)$ for all $t\ge 0$;
     \item [(ii)] the kinetic energy of the fluid is conserved, that is, $E(\zeta(t,\cdot))=E(\zeta_0)$ for all $t\ge 0$.
   \end{itemize}
\end{lemma}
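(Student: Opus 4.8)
The plan is to prove Lemma~\ref{lem24} as a Yudovich-type well-posedness theorem for the weighted transport equation \eqref{vor}, exploiting the fact that the velocity is recovered from $\zeta$ through the smoothing operator $\mathcal{K}$. The four assertions --- existence, uniqueness, the rearrangement property (i) and energy conservation (ii) --- are obtained in turn by constructing the Lagrangian flow of the velocity field, transporting $\zeta_0$ along it, and verifying the two invariants. First I would establish the crucial regularity of the velocity. For $\zeta\in L^\infty(D)$ the stream function $\psi=\mathcal{K}\zeta$ solves $-\text{div}(b^{-1}\nabla\psi)=b\zeta\in L^\infty$, which is uniformly elliptic with $C^1$ coefficients on the smooth lake (recall $b\in C^2(\bar D,(0,+\infty))$ and $\partial D\in C^\infty$); elliptic regularity gives $\psi\in W^{2,p}(D)$ for every $p<\infty$, so that $\nabla\psi$, and hence $\mathbf{v}=b^{-1}\nabla^\perp\psi$, is bounded and log-Lipschitz. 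Since $\psi$ has zero trace, $\nabla^\perp\psi$ is tangent to $\partial D$, whence $(b\mathbf{v})\cdot\mathbf{n}=0$; moreover $\text{div}(b\mathbf{v})=\text{div}\,\nabla^\perp\psi=0$, so $\mathbf{v}$ is divergence-free with respect to $\nu$ and tangent to the boundary.

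Given this velocity regularity, existence and the transport structure follow the classical route. A log-Lipschitz field satisfies the Osgood condition, so the ODE $\dot X=\mathbf{v}(t,X)$ admits a unique flow $\Phi_t:\bar D\to\bar D$; tangency to $\partial D$ keeps trajectories in $\bar D$, and $\text{div}(b\mathbf{v})=0$ makes $\Phi_t$ a $\nu$-measure-preserving homeomorphism. Setting $\zeta(t,\cdot)=\zeta_0\circ\Phi_t^{-1}$ produces a bounded weak solution of \eqref{time2}, with continuity in $L^p$ inherited from continuity of $t\mapsto\Phi_t$; this yields existence in the stated class. Property (i) is then immediate: because $\Phi_t$ preserves $\nu$, the push-forward $\zeta_0\circ\Phi_t^{-1}$ is by definition a rearrangement of $\zeta_0$, i.e. $\zeta(t,\cdot)\in\mathcal{R}(\zeta_0)$. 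For property (ii) I would differentiate $E(\zeta(t))=\tfrac12\int_D\zeta\psi\,d\nu$ in time, use the symmetry of $\mathcal{K}$ together with $\partial_t\zeta=-\mathbf{v}\cdot\nabla\zeta$ and $b\mathbf{v}=\nabla^\perp\psi$, and integrate by parts to obtain
\begin{equation*}
\frac{d}{dt}E(\zeta(t))=-\int_D\psi\,\nabla^\perp\psi\cdot\nabla\zeta\,d\textit{m}=\int_D\zeta\,\nabla^\perp\psi\cdot\nabla\psi\,d\textit{m}=0,
\end{equation*}
since $\nabla^\perp\psi\cdot\nabla\psi\equiv0$ pointwise and the boundary term vanishes as $\psi=0$ on $\partial D$. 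At the level of merely bounded solutions this identity requires a regularization-and-passage-to-the-limit argument, which the $W^{2,p}$ regularity of $\psi$ renders routine.

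The core of the proof, and the main obstacle, is \textbf{uniqueness}, which I would handle by Yudovich's argument adapted to the weighted Biot--Savart law $K(x,y)=b(x)G(x,y)+R(x,y)$. Let $\zeta_1,\zeta_2$ be two bounded weak solutions with the same initial datum; both satisfy the same $L^\infty$ bound by (i), so the velocities $\mathbf{v}_i=b^{-1}\nabla^\perp\mathcal{K}\zeta_i$ are uniformly log-Lipschitz. Tracking the quantity $Q(t)=\int_D|\Phi^1_t(a)-\Phi^2_t(a)|\,d\nu(a)$ between the two Lagrangian flows, I would estimate $\dot Q(t)$ by splitting it into the log-Lipschitz modulus of a single field applied to $Q$ and the difference $\|\mathbf{v}_1-\mathbf{v}_2\|_{L^\infty}$, the latter controlled by $\|\zeta_1-\zeta_2\|$ through the smoothing kernel and hence again by $Q$. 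This produces an Osgood inequality $\dot Q\le C\,\Theta(Q)$ with $\int_{0}\Theta^{-1}=+\infty$ and $Q(0)=0$, forcing $Q\equiv0$ and thus $\zeta_1=\zeta_2$. The technical care here lies in controlling the correction term $R$ and the weight $b$ inside the log-Lipschitz and Biot--Savart estimates on the smooth but curved domain $D$; once these are in place, the Osgood--Gronwall scheme closes exactly as in the unweighted planar case.
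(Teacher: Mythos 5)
The paper contains no proof of this lemma to compare against: it is imported verbatim from Dekeyser \cite{De2} (the paper says only ``weak solutions of the Cauchy problem exist globally and these solutions are unique, see \cite{De2} and the references therein''), and in \cite{De2} the result rests on the Yudovich-type well-posedness theory for lakes with non-vanishing depth going back to Levermore, Oliver and Titi. Your sketch is a reconstruction of precisely that theory, and its skeleton is the right one: for a smooth lake $b\in C^2(\bar D,(0,+\infty))$ is bounded away from zero on the compact set $\bar D$, so $-\mathrm{div}(b^{-1}\nabla\,\cdot\,)$ is uniformly elliptic with $C^1$ coefficients, $\mathcal{K}$ maps $L^\infty(D)$ into $W^{2,p}(D)$ for every finite $p$, the velocity $\mathbf{v}=b^{-1}\nabla^\perp\mathcal{K}\zeta$ is bounded, log-Lipschitz and tangent to $\partial D$, $\nu$-preservation of the flow (from $\mathrm{div}(b\mathbf{v})=0$) gives (i), and your formal energy identity plus a routine regularization gives (ii).

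Two steps, however, have genuine gaps. First, existence as written is circular: the flow $\Phi_t$ is generated by $\mathbf{v}$, which is determined by the unknown $\zeta(t,\cdot)$ itself, so one cannot simply ``solve the ODE and push forward $\zeta_0$''; one needs an iteration or approximation scheme (transport $\zeta^{n+1}$ by the velocity of $\zeta^n$, or mollify the data and pass to the limit), with compactness supplied by the uniform log-Lipschitz bounds. Second, in the uniqueness argument the phrase ``$\|\mathbf{v}_1-\mathbf{v}_2\|_{L^\infty}$ \dots controlled by $\|\zeta_1-\zeta_2\|$ \dots and hence again by $Q$'' fails as stated: two rearrangements of the same $\zeta_0$ have $\|\zeta_1-\zeta_2\|_{L^\infty}$ of order one, and no norm of $\zeta_1-\zeta_2$ strong enough to control $\|\mathbf{v}_1-\mathbf{v}_2\|_{L^\infty}$ is dominated by $Q$. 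The correct Marchioro--Pulvirenti-style step uses that both solutions are push-forwards of the same $\zeta_0$ along $\nu$-preserving flows (for arbitrary bounded weak solutions this itself requires uniqueness for the linear transport equation with log-Lipschitz velocity), writes
\begin{equation*}
\left(\mathcal{K}\zeta_1-\mathcal{K}\zeta_2\right)(x)=\int_D\left(K\left(x,\Phi^1_t(a)\right)-K\left(x,\Phi^2_t(a)\right)\right)\zeta_0(a)\,d\nu(a),
\end{equation*}
and bounds the kernel difference by a concave logarithmic modulus, so that Jensen's inequality yields $\dot Q\le C\,\Theta(Q)$ and the Osgood argument closes. With these two repairs your outline coincides with the standard proof underlying the citation.
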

We are now ready to give the proofs of Theorem \ref{thm3} and Theorem \ref{thm4}.

\subsection{Proof of Theorem \ref{thm3}}
\begin{proof}
 The proof is almost the same as in \cite{Bur2}, but we give it for the sake of completeness. Let $0<\epsilon<1$. Since $\zeta_0$ is a strict local maximiser, we can choose $0<\delta_1<\epsilon/2$ such that $\zeta_0$ maximises $E$ strictly on $\mathcal{R}(\zeta_0)\cap \mathcal{N}_\delta(\zeta_0)$, where $\mathcal{N}_\delta(\zeta_0)$ denotes an closed ball in $L^p(D,\nu)$ of center $\zeta_0$ and radius $\delta$. Since the relative weak topology of $L^p(D,\nu)$ on $\mathcal{R}(\zeta_0)$ coincides with the strong topology and $E$ is weakly sequentially continuous, we deduce that there exists a $\eta>0$ such that
\begin{equation}\label{addd1}
  E(g)<E(\zeta_0)-\eta,\ \ \forall\,g\in{\mathcal{R}(\zeta_0)\cap \partial \mathcal{N}_\delta(\zeta_0)}.
\end{equation}

Now choose $0<\delta<\delta_1$, such that if $\zeta_1,\zeta_2\in \mathcal{N}_{\|\zeta\|_{L^p(D,\nu)}+1}(0)$ and $\|\zeta_1-\zeta_2\|_{L^p(D,\nu)}<\delta$ then $|E(\zeta_1)-E(\zeta_2)|< \eta/2$. Let ${\zeta}\in L^\infty([0,\infty)\times D, \mathbb{R})$ be a weak solution of equation \eqref{time2}, and $\|\zeta(0,\cdot)-\zeta_0\|_{L^p(D,\nu)}<\delta$. Let $\mathbf{v}=b^{-1}\nabla^\perp \mathcal{K}{\zeta}$ be the corresponding velocity, and let $\tilde{\zeta}$ be the solution of $\partial_t(b\,\tilde{\zeta})+\text{div}(b\,\tilde{\zeta}\mathbf{v})=0$ with initial data $\tilde{\zeta}(0,\cdot)=\zeta_0$. Then we have
\begin{equation*}
  \|\tilde{\zeta}(t,\cdot)-\zeta(t,\cdot)\|_{L^p(D,\nu)}=\|\tilde{\zeta}(0,\cdot)-\zeta(0,\cdot)\|_{L^p(D,\nu)}<\delta.
\end{equation*}
Thus
\begin{equation}\label{addd2}
\begin{split}
     |E(\tilde{\zeta}(t,\cdot))-E(\zeta_0)| & \le |E(\tilde{\zeta}(t,\cdot))-E(\zeta(t,\cdot))|+|E(\zeta(0,\cdot))-E(\zeta_0)| \\
     &< \eta/2+\eta/2=\eta.
\end{split}
\end{equation}
Combining \eqref{addd1} and \eqref{addd2}, we derive $\tilde{\zeta}(t,\cdot)\not\in\partial \mathcal{N}_\delta(\zeta_0)$ for all $t\ge 0$, so by continuity $\tilde{\zeta}(t,\cdot)\in \mathcal{N}_\delta(\zeta_0)$ for all $t\ge 0$. Hence
\begin{equation*}
  \|\zeta(t,\cdot)-\zeta_0\|_{{L^p(D,\nu)}}\le \|\zeta(t,\cdot)-\tilde{\zeta}(t,\cdot)\|_{{L^p(D,\nu)}}+\|\tilde{\zeta}(t,\cdot)-\zeta_0\|_{{L^p(D,\nu)}}<\delta+\delta_1<\epsilon.
\end{equation*}
Therefore our proof is completed.
\end{proof}

\subsection{Proof of Theorem \ref{thm4}}
\begin{proof}
   Note that $\mathcal{R}(\zeta^\varepsilon)\subset \mathcal{A}_{\varepsilon}$ and $\mathcal{F}_\varepsilon$ is constant on $\mathcal{R}(\zeta^\varepsilon)$. Thus $\zeta^\varepsilon$ is a maximiser of kinetic energy $E$ relative to $\mathcal{R}(\zeta^\varepsilon)$. Now the desired result follows from Theorem \ref{thm3}.
\end{proof}

Acknowledgements: D. Cao was supported by NNSF of China (grant No. 11831009) and Chinese Academy of Sciences by grant QYZDJ-SSW-SYS021. W. Zhan and C. Zou were supported by NNSF of China (grant No. 11771469) and Chinese Academy of Sciences by grant QYZDJ-SSW-SYS021.

\end{document}